\numberwithin{equation}{section}
\title[Arithmetic dynamics of random polynomials]{Arithmetic dynamics of random polynomials}
\author{Pierre Le Boudec}
\address{Departement Mathematik und Informatik \\ Fachbereich Mathematik \\ Spiegelgasse 1 \\ 4051 Basel \\ Switzerland}
\email{pierre.leboudec@unibas.ch}
\author{Niki Myrto Mavraki}
\address{Harvard University \\ Department of Mathematics \\ Science Center Room 325 \\ 1 Oxford Street \\ Cambridge \\ MA 02138 \\ USA}
\email{mavraki@math.harvard.edu}
\subjclass[2010]{37P05, 37P15, 37P35}
\keywords{Polynomials, preperiodic points, canonical heights}
\begin{document}

\newtheorem{theorem}{Theorem}[section]
\newtheorem{lemma}[theorem]{Lemma}
\newtheorem{proposition}[theorem]{Proposition}
\newtheorem{corollary}[theorem]{Corollary}
\newtheorem{conjecture}[theorem]{Conjecture}
\newtheorem{conj}{Conjecture}
\renewcommand{\theconj}{\Alph{conj}}
\theoremstyle{definition}
\newtheorem{definition}[theorem]{Definition}

\newcommand{\rad}{\operatorname{rad}}

\begin{abstract}
We investigate from a statistical perspective the arithmetic properties of the dynamics of polynomials of fixed degree and defined over the field of rational numbers. To start with, ordering their affine conjugacy classes by height, we show that their average number of rational preperiodic points is equal to zero, thereby proving a strong average version of the uniform boundedness conjecture of Morton and Silverman. Next, inspired by the analogy with the successive minima of a lattice we define the dynamical successive minima of a polynomial. Noting that these quantities are invariant under the action by conjugacy of the affine group we study their average behaviour using the aforementioned ordering by height. In particular, we prove an optimal statistical version of the dynamical Lang conjecture on the canonical height of rational non-preperiodic points.
\end{abstract}

\maketitle

\thispagestyle{empty}
\setcounter{tocdepth}{1}
\tableofcontents

\section{Introduction}

The arithmetic properties of the dynamics of polynomials defined over the field of rational numbers, despite being intensely investigated, remain largely elusive. Indeed, even in the simplest case of quadratic polynomials, the uniform boundedness conjecture of Morton and Silverman \cite{MR1264933} on the number of rational preperiodic points, as well as the dynamical Lang conjecture \cite[Conjecture~$4$.$98$]{MR2316407} on the canonical height of rational non-preperiodic points are wide open. We note that the number of rational preperiodic points of a polynomial defined over $\mathbb{Q}$ and the set of values of its canonical height at rational points are both invariant under the action by conjugacy of the affine group $\mathbb{Q} \rtimes \mathbb{Q}^{\times}$. Given $d \geq 2$, it is thus natural to study these conjectures on average over affine conjugacy classes of polynomials of degree $d$.

Outside a set of dimension $d-2$, we can parametrize these conjugacy classes by a Zariski open subset of $\mathbb{P}^{d-1}(\mathbb{Q})$ as follows. We view projective points as primitive vectors of integers so we let $\mathbb{Z}_{\mathrm{prim}}^d$ be the set of $(x_1, \dots, x_d) \in \mathbb{Z}^d$ such that $\gcd(x_1, \dots, x_d)=1$, and we introduce the set
\begin{equation*}
\mathscr{P}_d = \left\{ \psi_{\mathbf{a}} \in \mathbb{Q}[z] :
\begin{array}{l l}
\mathbf{a} = (a_d, a_{d-2}, \dots , a_0) \in \mathbb{Z}^d_{\mathrm{prim}} \\
a_d \neq 0, \ a_0 > 0
\end{array}
 \right\},
\end{equation*}
where
\begin{equation}
\label{Definition psi}
\psi_{\mathbf{a}}(z) = \frac{a_d}{a_0} z^d + \frac{a_{d-2}}{a_0} z^{d-2} + \cdots + \frac{a_1}{a_0} z + 1.
\end{equation}
We remark that the affine conjugacy class of any degree $d$ polynomial which does not fix the barycenter of its roots in some algebraic closure of $\mathbb{Q}$ contains exactly one element of $\mathscr{P}_d$.

We shall order elements of $\mathscr{P}_d$ using the usual exponential height on projective space. We thus define the height $\mathscr{H}(\psi_{\mathbf{a}})$ of a polynomial $\psi_{\mathbf{a}} \in \mathscr{P}_d$ as
\begin{equation*}
\mathscr{H}(\psi_{\mathbf{a}}) = \max \left\{ |a_d|, |a_{d-2}|, \dots, |a_1|, a_0 \right\}.
\end{equation*}
Finally, given $X \geq 1$ we let
\begin{equation*}
\mathscr{P}_d(X) = \left\{ \psi_{\mathbf{a}} \in \mathscr{P}_d : \mathscr{H}(\psi_{\mathbf{a}}) \leq X \right\}.
\end{equation*}

We now proceed to introduce some classical notation. Given a set $S$ and a map $f : S \to S$, for any $n \geq 1$ we let $f^n$ denote the $n$-th iterate of $f$ and by convention we let $f^0$ be the identity map. Moreover, we let $\mathrm{Prep}_S(f)$ denote the set of preperiodic points of $f$ in $S$, that is
\begin{equation*}
\mathrm{Prep}_S(f) = \left\{ s \in S : \exists \ell \geq 0 \ \exists m \geq 1 \ f^{\ell+m}(s) = f^{\ell}(s) \right\}.
\end{equation*}

The following conjecture is a particular case of the uniform boundedness conjecture of Morton and Silverman \cite{MR1264933}.

\begin{conj}[Morton--Silverman]
\label{Conjecture UBC}
Let $d \geq 2$. There exists $C_d > 0$ such that for any $\psi_{\mathbf{a}} \in \mathscr{P}_d$, we have
\begin{equation*}
\# \mathrm{Prep}_{\mathbb{Q}}(\psi_{\mathbf{a}}) \leq C_d.
\end{equation*}
\end{conj}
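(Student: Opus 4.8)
\noindent\textit{A possible line of proof (conditional).}
This statement is one of the central open problems in the area, so what follows is a plan for a \emph{conditional} proof, modelled on the conjectural framework of Caporaso--Harris--Mazur and on the proof of uniform boundedness of torsion on elliptic curves, rather than an unconditional argument. The plan is to pass to moduli. For $d \geq 2$ and $n \geq 1$ let $\mathcal{Y}_d^{\mathrm{per}}(n)$ be the variety parametrizing pairs $(f,P)$, where $f$ is a degree $d$ polynomial up to affine conjugacy and $P$ has exact period $n$ for $f$; a rational point of $\mathcal{Y}_d^{\mathrm{per}}(n)$ lying above $\psi_{\mathbf{a}}$ records a rational periodic point of $\psi_{\mathbf{a}}$ of exact period $n$, and similarly let $\mathcal{Y}_d^{\mathrm{prep}}(\ell,n)$ record a rational point that meets an $n$-cycle after exactly $\ell$ steps. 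Since a degree $d$ polynomial has at most $d$ rational preimages of any given point, it suffices to establish: (a) a bound on the set of periods $n$ realised by a rational periodic point of some $\psi_{\mathbf{a}} \in \mathscr{P}_d$; (b) for each such $n$, a bound on the number of rational points of exact period $n$ of a single $\psi_{\mathbf{a}}$; and (c) a bound on the tail length $\ell$ and on the number of rational points with a prescribed preperiodic portrait. These three inputs combine into a constant $C_d$ depending only on $d$.

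For (b) with $n$ fixed, the plan is to prove that $\mathcal{Y}_d^{\mathrm{per}}(n)$ is a variety of general type as soon as $n$ exceeds an explicit threshold $n_0(d)$; this is the hard geometric input, requiring a lower bound on the Kodaira dimension, equivalently control of the canonical bundle of a smooth model of the (singular, and in general reducible) $n$-th dynatomic variety, uniformly in $n$. Granting general type, the Bombieri--Lang conjecture gives that the rational points of $\mathcal{Y}_d^{\mathrm{per}}(n)$ lie in a proper closed subvariety, and a Caporaso--Harris--Mazur type fibration argument over the moduli space of degree $d$ polynomials then converts this into a uniform bound $N(d,n)$ on the number of rational points of exact period $n$ of any single $\psi_{\mathbf{a}}$. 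For (a), one argues that for $n > n_0(d)$ the set $\mathcal{Y}_d^{\mathrm{per}}(n)(\mathbb{Q})$ is empty, either from general type together with the geometry of the natural forgetful fibrations, or from a direct archimedean and $p$-adic height bound on a hypothetical rational periodic point that reduces the question to a finite check. For (c), the same analysis applied to $\mathcal{Y}_d^{\mathrm{prep}}(\ell,n)$, fibered over $\mathcal{Y}_d^{\mathrm{per}}(n)$ via $P \mapsto f^{\ell}(P)$, forces $\ell$ to be bounded and the relevant point counts to be uniform. Summing $N(d,n)$ and the tail contributions over $n \leq n_0(d)$ then yields $C_d$.

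The main obstacle is clearly the general-type statement underlying (a) and (b). Already for $d = 2$, where $\mathcal{Y}_2^{\mathrm{per}}(n)$ is a curve, its genus and the emptiness of its rational point set are known only for small $n$, and the expectation of general type for all large $n$ rests on subtle estimates for the dynatomic polynomials; for $d \geq 3$ one must in addition control the singularities and the irreducible components of a higher-dimensional variety uniformly in both $d$ and $n$, which is beyond current techniques. Compounding this, the Diophantine conclusions invoked in the plan -- the Bombieri--Lang conjecture and the Caporaso--Harris--Mazur uniformity principle -- are themselves open. An unconditional proof along these lines is therefore not available at present, which is why one is forced to settle instead for conditional or averaged forms of Conjecture~\ref{Conjecture UBC}.
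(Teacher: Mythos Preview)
The statement you were asked to prove is stated in the paper as a \emph{conjecture}, not a theorem; the paper offers no proof of it. You correctly recognise this and present a conditional plan rather than an unconditional argument, so in that sense there is nothing to compare against.

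That said, it is worth noting where your conditional framework sits relative to what the paper records as known. You propose a Caporaso--Harris--Mazur style approach via moduli of dynatomic varieties, conditional on establishing general type for $\mathcal{Y}_d^{\mathrm{per}}(n)$ for large $n$ and on the Bombieri--Lang conjecture. The paper instead cites Looper's work showing that Conjecture~\ref{Conjecture UBC} follows from a particular case of Vojta's conjecture; this is a different conditional route, trading the geometric input (general type) and the Diophantine input (Bombieri--Lang plus CHM uniformity) for a single height-theoretic hypothesis. Your plan is coherent as a heuristic, but it is worth being aware that the conditional reduction actually in the literature goes through Vojta rather than through CHM, and that the general-type statement you would need for the dynatomic varieties in higher dimension is itself not established and would be a substantial project even to formulate precisely.

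One small gap in your outline: step~(a), the claim that $\mathcal{Y}_d^{\mathrm{per}}(n)(\mathbb{Q})$ is empty for $n$ large, does not follow from general type and Bombieri--Lang alone, since Bombieri--Lang only confines rational points to a proper closed subset rather than ruling them out. You gesture at this with the alternative ``direct archimedean and $p$-adic height bound'', but that is precisely the hard part and would need to be made concrete. In the absence of such an argument your plan would yield a bound on the number of rational $n$-cycles for each $n$ but not a bound on $n$ itself, which is not enough to conclude.
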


We remark that Morton and Silverman actually conjecture that for any $n \geq 1$, the number of preperiodic points of degree $d$ morphisms $\mathbb{P}^n \to \mathbb{P}^n$ defined over a number field $K$ should be bounded in terms of $d$, $[K : \mathbb{Q}]$ and $n$. In addition, it is worth pointing out that Fakhruddin \cite[Remark~2.6]{MR1995861} has noticed that the particular case $K = \mathbb{Q}$ of this conjecture implies the general prediction.

We note that Conjecture~\ref{Conjecture UBC} is analogous to the celebrated result of Mazur \cite{MR488287} on the number of torsion points on elliptic curves defined over $\mathbb{Q}$. However, its setting critically lacks the structure coming from the group law and therefore remains far out of reach of current technology. Indeed, the best result in the direction of Conjecture~\ref{Conjecture UBC} allows the constant $C_d$ to depend on the number of prime numbers where the polynomial $\psi_{\mathbf{a}} \in \mathscr{P}_d$ does not have potentially good reduction (see Benedetto's work \cite[Main Theorem]{MR2339471}). In the worst possible case, this only yields
\begin{equation}
\label{Upper bound Benedetto}
\# \mathrm{Prep}_{\mathbb{Q}}(\psi_{\mathbf{a}}) \ll 1 + \log \mathscr{H}(\psi_{\mathbf{a}}),
\end{equation}
where the implied constant depends at most on $d$. We point out that throughout the article we use the notation $\ll \cdot$ as a convenient replacement for $=O( \cdot )$. However, we note that for certain weighted homogeneous one parameter families of polynomials, Ingram \cite{MR3988672} has shown that the analog of Conjecture~\ref{Conjecture UBC} holds. We also remark that there has been considerable activity in the case of quadratic polynomials. Most notably, Poonen \cite{MR1617987} has proved that one can take $C_2=8$ in Conjecture~\ref{Conjecture UBC} provided that there does not exist a quadratic polynomial with a rational periodic point of exact period larger than $3$. Unfortunately, it is only known that if such a point exists then its exact period has to be at least $6$ (see \cite{MR1199627} and \cite{MR1480542}). Finally, Looper \cite{MR4270662, Looper2} has shown that Conjecture~\ref{Conjecture UBC} follows from a particular case of Vojta's conjecture (see also \cite{Panraksa}). The interested reader is invited to refer to \cite[Section~$4$]{MR4007163} for a recent survey of our current knowledge on the uniform boundedness conjecture.

Averaging Benedetto's pointwise bound \cite[Main Theorem]{MR2339471}, we deduce that
\begin{equation*}
\frac1{\# \mathscr{P}_d(X)} \sum_{\psi_{\mathbf{a}} \in \mathscr{P}_d(X)} \# \mathrm{Prep}_{\mathbb{Q}}(\psi_{\mathbf{a}}) \ll (\log \log X) \log \log \log X.
\end{equation*}
Our first result offers a substantial improvement upon this upper bound.

\begin{theorem}
\label{Theorem UBC}
Let $d \geq 2$ and $\varepsilon > 0$. Let also $\vartheta_2 = 1/2$ and for $d \geq 3$, let
\begin{equation*}
\vartheta_d = \frac{2(d+1)}{5d+1}.
\end{equation*}
We have
\begin{equation*}
\frac1{\# \mathscr{P}_d(X)} \sum_{\psi_{\mathbf{a}} \in \mathscr{P}_d(X)} \# \mathrm{Prep}_{\mathbb{Q}}(\psi_{\mathbf{a}}) \ll \frac1{X^{\vartheta_d-\varepsilon}},
\end{equation*}
where the implied constant depends at most on $d$ and $\varepsilon$.
\end{theorem}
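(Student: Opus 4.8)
The plan is to prove the equivalent statement $\sum_{\psi_{\mathbf a}\in\mathscr P_d(X)}\#\mathrm{Prep}_{\mathbb{Q}}(\psi_{\mathbf a})\ll_{d,\varepsilon}X^{d-\vartheta_d+\varepsilon}$, which suffices since $\#\mathscr P_d(X)\asymp_d X^d$. A first reduction: the orbit of a rational preperiodic point is a finite subset of $\mathbb{Q}$ eventually landing on a cycle, which is then rational, so $\mathrm{Prep}_{\mathbb{Q}}(\psi_{\mathbf a})\neq\emptyset$ exactly when $\psi_{\mathbf a}$ has a rational periodic point; and by Benedetto's pointwise bound \eqref{Upper bound Benedetto} one has $\#\mathrm{Prep}_{\mathbb{Q}}(\psi_{\mathbf a})\ll_d 1+\log X$ for every $\psi_{\mathbf a}\in\mathscr P_d(X)$. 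Hence it is enough to show
\begin{equation*}
\#\{\psi_{\mathbf a}\in\mathscr P_d(X):\psi_{\mathbf a}\text{ has a rational periodic point}\}\ll_{d,\varepsilon}X^{d-\vartheta_d+\varepsilon}.
\end{equation*}

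Next I would pin down the size of rational preperiodic points. Writing such a point in lowest terms as $\alpha=p/q$ with $q\geq 1$, and using that the canonical height $\widehat h_{\psi_{\mathbf a}}$ vanishes on $\mathrm{Prep}_{\mathbb{Q}}(\psi_{\mathbf a})$ together with the standard comparison between $\widehat h_{\psi_{\mathbf a}}$ and the naive height, one gets $\max\{|p|,q\}\ll_d\mathscr H(\psi_{\mathbf a})^{1/(d-1)}$ (and something sharper in the first few cases, e.g.\ via passing to a monic conjugate). A local analysis at each finite place is also needed: if a prime $\ell$ divides $q$ then, because $d\geq 2$, the $\ell$-adic valuations along the orbit of $\alpha$ escape to $-\infty$ unless $\ell\mid a_d$, in which case $v_\ell(q)$ is controlled by $v_\ell(a_d)$. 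In particular every prime divisor of $q$ divides $a_d$, so $\rad(q)\mid a_d$ — a real arithmetic restriction that will make the relevant sums converge.

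The engine is the observation that $\psi_{\mathbf a}$ depends \emph{linearly} on $\mathbf a$: for fixed rationals $x=p/q$ and $y=p'/q'$ the equation $\psi_{\mathbf a}(x)=y$ is a single linear relation $\mathbf c_{x\to y}\cdot\mathbf a=0$, with $\|\mathbf c_{x\to y}\|_\infty\asymp\max\{q'\,H(x)^d,\,q^d\,H(y)\}$, where $H(\cdot)$ is the usual height on $\mathbb{Q}$. Since $\alpha$ preperiodic forces all iterates $\psi_{\mathbf a}(\alpha),\psi_{\mathbf a}^2(\alpha),\dots$ to be preperiodic, hence of height $\ll_d X^{1/(d-1)}$, I would bound $\#\{\psi_{\mathbf a}\in\mathscr P_d(X):\alpha\in\mathrm{Prep}_{\mathbb{Q}}(\psi_{\mathbf a})\}$ by summing over admissible initial stretches of the orbit of $\alpha$ and, for each of them, counting primitive $\mathbf a\in[-X,X]^d$ satisfying the corresponding system of linear relations; each such count is a lattice-point count in a box controlled by the successive minima of the associated sublattice of $\mathbb{Z}^d$, and the large size of the entries of the vectors $\mathbf c_{x\to y}$ yields a power-of-$H$ saving over the trivial $X^{d-1}$ per relation. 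Summing over the orbit data (using $\rad(q)\mid a_d$ and the density of rationals of bounded height) then calls for splitting according to the combinatorial shape of the orbit: fixed points contribute $\ll_d X^{d-1+\varepsilon}$; preperiodic points of long orbit are handled by the fact that the initial stretch of the orbit, together with the normalisations defining $\mathscr P_d$, determines $\psi_{\mathbf a}$ by interpolation, so — after showing that the requirement $\mathscr H(\psi_{\mathbf a})\leq X$ severely restricts which orbit-tuples can occur — such $\psi_{\mathbf a}$ are scarce; and the remaining (intermediate) regimes of rational periodic points of bounded period with short pre-periodic tails are treated by the lattice-point count with a bounded system of relations. Optimising the cut-offs between these regimes, over the period, the denominator $q$, and the heights, is what produces $\vartheta_d$ in the stated form. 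The case $d=2$ has to be done separately, since then $\mathscr P_2$ is one-dimensional and any two independent relations over-determine $\mathbf a$; there I would instead use the dynatomic curves parametrising $\psi_{\mathbf a}$ together with a periodic point of period $n$, whose degrees grow with $n$, together with $n\ll\log X$ — and it is this detour that forces the smaller value $\vartheta_2=1/2$.

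The hard part is the treatment of rational preperiodic points with long orbit, equivalently rational periodic points of large period: a priori the orbit length is only $\ll\log\mathscr H(\psi_{\mathbf a})$, so one cannot enumerate orbits naively and must instead show that such polynomials are extremely rare — which works cleanly for $d\geq 3$ via interpolation but, for $d=2$, is the reason for the weaker exponent. The other genuinely delicate point is the bookkeeping of successive minima and of the greatest common divisors that appear in the lattice-point counts, together with the balancing of the various error terms, which is what pins down $\vartheta_d$.
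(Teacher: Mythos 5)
Your route is genuinely different from the paper's, and as sketched it leaves several load-bearing steps unverified. The paper does not count orbits or rational periodic points at all. Its engine is a local-height inequality: Lemma~\ref{Lemma a0} (for $d\geq 3$) shows that if $\hat h_{\psi_{\mathbf a}}(z)$ is small, then a large divisor of the $k$-free part $\mathrm s_k(a_0)$ must divide the single form $\sigma_{\mathbf a}(x,y)$, up to corrections involving $\gcd(\mathrm s_k(a_0),a_d)$ and $\Delta_{\mathrm s_k(a_0)}(a_d)$. After discarding the thin set where $\Delta_{a_0}(a_d)$ is large (Lemma~\ref{Lemma restricting}), this gives a \emph{congruence} on $a_1$ modulo a large $D\mid\mathrm s_k(a_0)$, and one counts by first summing $a_1$, then $a_0$ via its $k$-full part (Lemma~\ref{Lemma useful}), optimising $\delta$ and taking $k=3$. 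This argument never needs $z$ to be preperiodic, only $\hat h_{\psi_{\mathbf a}}(z)=0$, which is why the same machinery also yields Theorem~\ref{Theorem Lang} and Proposition~\ref{Proposition lambda statistical}. For $d=2$ the argument is even shorter: Lemma~\ref{Lemma d=2} gives $\hat h_{\psi_{\mathbf a}}(z)\geq\tfrac12\log\mathrm s_2(a_0)$ unconditionally, so a rational preperiodic point forces $a_0$ to be squareful, and the $X^{1/2}$ count of squarefuls plus Benedetto's $\ll\log X$ bound yields $\vartheta_2=1/2$ directly — no dynatomic curves.

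The concrete gaps in your plan. First, you propose to count, for each orbit tuple, the primitive $\mathbf a\in[-X,X]^d$ satisfying the corresponding linear relations, and you assert a ``power-of-$H$ saving over the trivial $X^{d-1}$ per relation''. This is the crux and is not free: the lattice $\{\mathbf a:\mathbf c_{x\to y}\cdot\mathbf a=0\}$ can be badly skewed (short primitive vectors exist for special $\alpha$), so the Davenport-type count contains lower-dimensional terms $X^{d-2}/(\mu_1\cdots\mu_{d-2})$ that dominate the expected $X^{d-1}/\|\mathbf c\|$ unless the successive minima $\mu_i$ are controlled, and you give no such control. Even ignoring this, for fixed points alone the naive bound — $\ll X^{d-1}$ polynomials per fixed point times $\ll X^{2/(d-1)}$ fixed points of height $\ll X^{1/(d-1)}$ — already gives $X^{d-1+2/(d-1)}$, which for $d=3$ is $X^d$, no saving; so some genuine arithmetic input (beyond $\rad(q)\mid a_d$) is indispensable even in the simplest regime, and you have not supplied it or shown that it recovers $\vartheta_d=\tfrac{2(d+1)}{5d+1}$. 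Second, the treatment of long orbits rests on the unproven claim that ``the requirement $\mathscr H(\psi_{\mathbf a})\leq X$ severely restricts which orbit-tuples can occur''; you flag this yourself as the hard part, but it is not a routine calculation — the number of length-$(d-1)$ tuples of rationals of height $\ll X^{1/(d-1)}$ is already of order $X^{2d/(d-1)}$, and for small $d$ this exceeds $X^{d-\vartheta_d}$, so the height restriction has to do very serious work. Third, for $d=2$ the dynatomic-curve detour would require uniform bounds on rational points of height $\ll X$ on a family of curves whose genus grows with the period $n\ll\log X$; this is a substantial (and unspecified) project, and in any case is not what produces the exponent $1/2$ here.

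What is salvageable. Your observation that $\rad(q)\mid a_d$ for a preperiodic $p/q$ (a consequence of Lemma~\ref{Lemma non Archimedean equality}) is correct and is an instance of the local analysis the paper does use, though the paper turns the corresponding information at primes dividing $a_0$ into the more powerful congruence $\mathrm s_k(a_0)\mid\sigma_{\mathbf a}(x,y)$ (Lemma~\ref{Lemma a0}). Your framing via linear relations $\mathbf c_{x\to y}\cdot\mathbf a=0$ is also close in spirit to what the paper does after fixing $D$, $x$, $y$ in the proof of Lemma~\ref{Lemma useful} — except that the paper sums over a congruence in $a_1$ rather than fixing the whole orbit, which avoids the orbit-enumeration problem entirely. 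If you want to push your route through, the priority is (i) to replace the orbit enumeration by a constraint that only requires one small-canonical-height point, and (ii) to make the lattice-point count rigorous, including the unbalanced case; this will, in effect, reconstruct the role of Lemmas~\ref{Lemma Weil}, \ref{Lemma a0} and \ref{Lemma useful}.
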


It follows in particular from Theorem~\ref{Theorem UBC} that when ordered by height, $100 \%$ of the affine conjugacy classes of degree $d$ polynomials defined over $\mathbb{Q}$ do not have any rational preperiodic point. Theorem~\ref{Theorem UBC} can thus be viewed as a strong average version of Conjecture~\ref{Conjecture UBC}.

We remark that in view of the upper bound \eqref{Upper bound Benedetto}, it follows from Theorem~\ref{Theorem UBC} that all higher order moments of the quantity $\# \mathrm{Prep}_{\mathbb{Q}}(\psi_{\mathbf{a}})$ satisfy the same upper bound as the first moment. In addition, we expect that the contribution from the polynomials $\psi_{\mathbf{a}} \in \mathscr{P}_d$ having a rational periodic point of exact period at least $2$ is negligible compared to the contribution from the polynomials having a rational fixed point. As a result, geometry of numbers heuristics lead us to conjecture that there exists $\gamma_d > 0$ such that
\begin{equation}
\label{Estimate conjecture}
\frac1{\# \mathscr{P}_d(X)} \sum_{\psi_{\mathbf{a}} \in \mathscr{P}_d(X)} \# \mathrm{Prep}_{\mathbb{Q}}(\psi_{\mathbf{a}}) \sim \frac{\gamma_d}{X}.
\end{equation}

As it turns out, we can prove much more precise results than Theorem~\ref{Theorem UBC} by studying the growth of canonical heights. Recall that the canonical height $\hat{h}_{\psi_{\mathbf{a}}} : \mathbb{Q} \to \mathbb{R}_{\geq 0}$ of a polynomial $\psi_{\mathbf{a}} \in \mathscr{P}_d$ is defined by
\begin{equation}
\label{Definition global height}
\hat{h}_{\psi_{\mathbf{a}}}(z) = \lim_{n \to \infty} \frac1{d^n} h(\psi_{\mathbf{a}}^n(z)),
\end{equation}
where $h : \mathbb{Q} \to \mathbb{R}_{\geq 0}$ denotes the logarithmic Weil height. We note that for any $z \in \mathbb{Q}$, we have
\begin{equation}
\label{Equality global height}
\hat{h}_{\psi_{\mathbf{a}}}(\psi_{\mathbf{a}}(z)) = d \cdot \hat{h}_{\psi_{\mathbf{a}}}(z).
\end{equation}
Another crucial property of the canonical height $\hat{h}_{\psi_{\mathbf{a}}}$ is that it vanishes exactly at preperiodic points of $\psi_{\mathbf{a}}$, and more generally it measures how far a given rational number is from being a preperiodic point of $\psi_{\mathbf{a}}$.

In the setting of elliptic curves, Lang \cite[page~$92$]{MR518817} has made a conjecture on the minimum canonical height of non-torsion rational points. Lang's conjecture is believed to be buried very deep and is only known to hold in special cases (see for instance the works of Silverman \cite{MR630588}, and Hindry and Silverman \cite{MR948108}). Nevertheless, we note that the first author \cite{MR4029699} has recently established a statistical version of this conjecture for the family of all elliptic curves defined over the field of rational numbers. Silverman \cite[Conjecture~$4$.$98$]{MR2316407} has formulated an intriguing dynamical analog of Lang's conjecture. Unfortunately, as in the setting of elliptic curves this conjecture remains largely open and only partial or conditional results are known (see for example the works of Ingram \cite{MR2504779, MR3988672} and Looper \cite{MR3953120, Looper2}).

Recall that given $\psi_{\mathbf{a}} \in \mathscr{P}_d$ and $u \geq 0$, the set of $z \in \mathbb{Q}$ such that $\hat{h}_{\psi_{\mathbf{a}}}(z) \leq u$ is a set of bounded Weil height and is thus finite. Hence, we can define
\begin{equation}
\label{Definition lambda1}
\lambda_1(\psi_{\mathbf{a}}) = \min \left\{ \hat{h}_{\psi_{\mathbf{a}}}(z) : z \in \mathbb{Q} \smallsetminus \mathrm{Prep}_{\mathbb{Q}}(\psi_{\mathbf{a}}) \right\}.
\end{equation}
In view of \cite[page~$103$]{MR2884382}, we see that the following conjecture implies Silverman's prediction \cite[Conjecture~$4$.$98$]{MR2316407} when restricted to our setting.

\begin{conj}[Silverman]
\label{Conjecture Lang}
Let $d \geq 2$. There exists $c_d > 0$ such that for any $\psi_{\mathbf{a}} \in \mathscr{P}_d$, we have
\begin{equation*}
\lambda_1(\psi_{\mathbf{a}}) > c_d \log \mathscr{H}(\psi_{\mathbf{a}}).
\end{equation*}
\end{conj}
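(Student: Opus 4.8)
The plan is to attack Conjecture~\ref{Conjecture Lang} by decomposing the canonical height into local contributions and reducing the problem to a gap estimate for the integer point $\mathbf a$ against the preperiodicity loci of the family; I expect the statement to be provable only conditionally on an input of the strength of Vojta's conjecture, with an unconditional proof lying beyond current technology. Fix $\psi_{\mathbf a}\in\mathscr{P}_d$ and a non-preperiodic $z\in\mathbb{Q}$ realising $\lambda_1(\psi_{\mathbf a})$, and write $\hat h_{\psi_{\mathbf a}}=\sum_v\hat\lambda_{\psi_{\mathbf a},v}$ for the Call--Silverman decomposition into local canonical heights, in the normalisation making each $\hat\lambda_{\psi_{\mathbf a},v}$ the $v$-adic Green's function of the filled Julia set of $\psi_{\mathbf a}$; thus every $\hat\lambda_{\psi_{\mathbf a},v}\geq 0$, and $\hat\lambda_{\psi_{\mathbf a},v}(z)=\log|z|_v+\tfrac{1}{d-1}\log^+|a_d/a_0|_v+o(1)$ as $|z|_v\to\infty$. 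By \eqref{Equality global height} a lower bound for $\hat h_{\psi_{\mathbf a}}$ at any point of the forward orbit of $z$ suffices, so we may focus on $z$ of small Weil height.

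The first step is the standard effective comparison $\bigl|\hat h_{\psi_{\mathbf a}}(z)-h(z)\bigr|\leq C_d\log\mathscr{H}(\psi_{\mathbf a})$, valid for all $z\in\mathbb{Q}$, which follows by telescoping the elementary bound $h(\psi_{\mathbf a}(w))=d\,h(w)+O_d(\log\mathscr{H}(\psi_{\mathbf a}))$ and using $\log H(\psi_{\mathbf a})\ll_d\log\mathscr{H}(\psi_{\mathbf a})$; alternatively one sees it place by place, the local discrepancies vanishing at primes of good reduction and being controlled elsewhere by the logarithm of a resultant. This disposes of every non-preperiodic $z$ with $h(z)\geq 2C_d\log\mathscr{H}(\psi_{\mathbf a})$, leaving the non-preperiodic $z$ of bounded Weil height; for these one is done unless $\hat h_{\psi_{\mathbf a}}(z)=o(\log\mathscr{H}(\psi_{\mathbf a}))$, i.e.\ unless the entire forward orbit of $z$ stays small. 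A preliminary reduction, to be carried out via a direct escape-rate estimate at a place dividing $a_0a_d$ or at the archimedean place, should further restrict attention to \emph{balanced} $\psi_{\mathbf a}$, those with $\max\{\log|a_d|,\log a_0\}=o(\log\mathscr{H}(\psi_{\mathbf a}))$: if $\psi_{\mathbf a}$ is unbalanced its filled Julia set is geometrically tiny and a slowly escaping non-preperiodic orbit cannot occur.

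The core of the argument treats these slowly escaping orbits, where it is convenient to reformulate the desired bound. Writing $z_n=\psi_{\mathbf a}^n(z)=p_n/q_n$ in lowest terms, one has $q_{n+1}\mid a_0q_n^d$ and $|p_{n+1}|\ll_d\mathscr{H}(\psi_{\mathbf a})\max(|p_n|,|q_n|)^d$; by \eqref{Definition global height} the conclusion $\hat h_{\psi_{\mathbf a}}(z)\gg\log\mathscr{H}(\psi_{\mathbf a})$ is equivalent to $h(z_n)\gg d^n\log\mathscr{H}(\psi_{\mathbf a})$ for all large $n$, that is, to the absence of substantial cancellation, archimedean or $\ell$-adic, in the integer $a_dp_n^d+a_{d-2}p_n^{d-2}q_n^2+\cdots+a_0q_n^d$. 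A slowly escaping non-preperiodic orbit exhibits such cancellation at arbitrarily late times $n$; feeding this back through the product formula applied along the orbit, together with the identity $\psi_{\mathbf a}^{m}(z)=\psi_{\mathbf a}^{n}(z)$ whose failure encodes the non-preperiodicity of $z$, one is forced to conclude that the primitive vector $\mathbf a\in\mathbb{Z}^d_{\mathrm{prim}}$ lies within distance $\ll\exp\bigl(-c\,d^n\hat h_{\psi_{\mathbf a}}(z)\bigr)$, archimedean and $p$-adically, of the proper closed subscheme $V^{(n)}_z\subset\mathbb{P}^{d-1}$ defined by the vanishing of $\psi_{\mathbf b}^{n}(z)-\psi_{\mathbf b}^{m}(z)$ for $0\leq m<n$ as $\mathbf b$ varies --- a preperiodicity locus for the point $z$. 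Since $\mathbf a$ has height $\mathscr{H}(\psi_{\mathbf a})$ while the equations cutting out $V^{(n)}_z$ have degree and logarithmic height $\ll_d d^n\log\mathscr{H}(\psi_{\mathbf a})$, a Liouville-type gap inequality bounds this arithmetic distance, hence $\hat h_{\psi_{\mathbf a}}(z)$, from below.

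With these parameters the Liouville bound returns only the trivial $\hat h_{\psi_{\mathbf a}}(z)>0$: the degree and height of $V^{(n)}_z$ grow like $d^n$, exactly cancelling the exponential gain from the orbit. This loss of uniformity in $n$ is, I expect, the principal obstacle. Upgrading to $\hat h_{\psi_{\mathbf a}}(z)\gg\log\mathscr{H}(\psi_{\mathbf a})$ demands a lower bound for the arithmetic distance from $\mathbf a$ to the preperiodicity loci --- equivalently, for $\sum_v\log|\psi_{\mathbf a}^{n}(z)-\psi_{\mathbf a}^{m}(z)|_v$ --- that is insensitive to the exploding degree, which is precisely a case of Vojta's conjecture on $\mathbb{P}^{d-1}$ (the $abc$ conjecture when $d=2$); this is why the companion boundedness statements are, as in the work of Ingram and Looper \cite{MR2504779, MR3988672, MR3953120, MR4270662, Looper2}, presently known only conditionally. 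The proposal is therefore to (i) establish the comparison and the balanced reduction unconditionally, and (ii) make precise, in the core step, the reduction of the remaining case to the relevant instance of Vojta's conjecture; conditionally on that input one obtains $\lambda_1(\psi_{\mathbf a})>c_d\log\mathscr{H}(\psi_{\mathbf a})$ for every $\psi_{\mathbf a}\in\mathscr{P}_d$, while the unconditional statement remains out of reach.
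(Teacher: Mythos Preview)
This statement is Conjecture~\ref{Conjecture Lang} in the paper, and the paper does \emph{not} prove it: it is recorded as an open conjecture, with the explicit remark that only partial or conditional results are known (citing Ingram and Looper). There is therefore no ``paper's own proof'' to compare against. What the paper actually establishes is the statistical substitute, Theorem~\ref{Theorem Lang}, showing that $\lambda_1(\psi_{\mathbf a})$ lies within $\varepsilon\log\mathscr{H}(\psi_{\mathbf a})$ of $\tfrac{1}{d(d-1)}\log\mathscr{H}(\psi_{\mathbf a})$ for $100\%$ of $\psi_{\mathbf a}$ ordered by height; that proof proceeds by controlling the $k$-free part of $a_0$ and averaging over the family (Lemmas~\ref{Lemma d=2}, \ref{Lemma a0} and \ref{Lemma useful}), not via any pointwise gap argument.

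Your global assessment --- that the unconditional conjecture is out of reach and that a Vojta-strength input is the natural conditional route --- is consistent with the paper's own framing. On the sketch itself: the comparison $|\hat h_{\psi_{\mathbf a}}-h|\ll_d\log\mathscr{H}(\psi_{\mathbf a})$ is exactly Lemmas~\ref{Lemma lower bound} and \ref{Lemma upper bound}, so that step is fine. Your ``balanced reduction'', however, is stated too strongly. The unconditional lower bounds the paper obtains (Lemmas~\ref{Lemma d=2} and \ref{Lemma a0}) depend on the $k$-free part $\mathrm{s}_k(a_0)$ and on $\gcd$-type interactions between $a_0$ and $a_d$, not merely on the magnitudes $\log|a_d|$, $\log a_0$; a large but highly powerful $a_0$ (say $a_0=p^m$) does not by itself force fast escape at any place, so the claim that unbalanced $\psi_{\mathbf a}$ are disposed of by a direct escape-rate estimate is not justified as written. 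Your diagnosis of the Liouville loss in the core step is accurate in spirit and the reduction to a Vojta-type statement aligns with Looper's conditional work, but even there you should expect substantial additional machinery beyond the heuristic outlined; the proposal as it stands is a reasonable road map rather than a proof.
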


Maybe surprisingly, our next result shows that we can estimate very precisely the quantity $\lambda_1(\psi_{\mathbf{a}})$ for generic polynomials $\psi_{\mathbf{a}} \in \mathscr{P}_d$.

\begin{theorem}
\label{Theorem Lang}
Let $d \geq 2$ and $\varepsilon > 0$. We have 
\begin{equation*}
\lim_{X \to \infty} \frac1{\# \mathscr{P}_d(X)} \cdot
\# \left\{ \psi_{\mathbf{a}} \in \mathscr{P}_d(X) :
\left| \lambda_1(\psi_{\mathbf{a}}) - \frac{\log \mathscr{H}(\psi_{\mathbf{a}})}{d(d-1)} \right| \leq \varepsilon \log \mathscr{H}(\psi_{\mathbf{a}}) \right\} = 1.
\end{equation*}
\end{theorem}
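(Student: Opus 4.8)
The statement is equivalent to the conjunction of an upper bound $\lambda_1(\psi_{\mathbf{a}}) \le (\tfrac{1}{d(d-1)}+\varepsilon)\log\mathscr{H}(\psi_{\mathbf{a}})$ and a lower bound $\lambda_1(\psi_{\mathbf{a}}) \ge (\tfrac{1}{d(d-1)}-\varepsilon)\log\mathscr{H}(\psi_{\mathbf{a}})$, each required for all but $o(\#\mathscr{P}_d(X))$ of the $\psi_{\mathbf{a}} \in \mathscr{P}_d(X)$; since $\varepsilon$ is arbitrary it suffices to establish each with an error of size $O(\varepsilon\log\mathscr{H}(\psi_{\mathbf{a}})) + o(\log\mathscr{H}(\psi_{\mathbf{a}}))$. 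Both rely on the decomposition $\hat{h}_{\psi_{\mathbf{a}}}(z) = \sum_{v}\hat{\lambda}_{v}(z)$ of the canonical height into local heights $\hat{\lambda}_{v}(z) = \lim_{n\to\infty}d^{-n}\log\max(1,|\psi_{\mathbf{a}}^{n}(z)|_{v})$. As $\psi_{\mathbf{a}}$ has good reduction away from the primes dividing $a_0 a_d$, one has $\hat{\lambda}_{p}(z) = \log\max(1,|z|_p)$ for $p \nmid a_0a_d$; and the shape of $\psi_{\mathbf{a}}$ makes the $p$-adic orbit transparent when $p \mid a_0$: if $z$ is $p$-integral with residue outside the set $B_p \subseteq \mathbb{F}_p$ consisting of $0$ together with the roots of $a_d t^{d-1} + a_{d-2}t^{d-3} + \cdots + a_1$, then $|\psi_{\mathbf{a}}(z)|_p = p^{v_p(a_0)}$ and thereafter $|\psi_{\mathbf{a}}^{n+1}(z)|_p = p^{v_p(a_0)}|\psi_{\mathbf{a}}^{n}(z)|_p^{d}$, so in general $\hat{\lambda}_{p}(z) = d^{-k_p(z)}v_p(a_0)\log p/(d-1)$, where $k_p(z) \in \{0,1,2,\dots\}\cup\{\infty\}$ is the first index at which the orbit of $z$ leaves $B_p$ modulo $p$ (in particular $\hat{\lambda}_{p}(z) \ge 0$). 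For generic $\mathbf{a}$ one has $|a_0|,|a_d| \ge X^{1-\varepsilon}$, which bounds the filled Julia set of $\psi_{\mathbf{a}}$ on $\mathbb{C}$ inside a disc of radius $X^{O(\varepsilon)}$ and thereby controls $\hat{\lambda}_{\infty}$ and the primes dividing $a_d$ up to errors $O(\varepsilon\log\mathscr{H}(\psi_{\mathbf{a}}))$.

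For the upper bound I take $z = 0$, which is the natural choice here because $\psi_{\mathbf{a}}(0) = 1$. Outside a set of $\mathbf{a}$ of size $O(X^{d-1})$ the point $0$ is not preperiodic (this fails only when $\psi_{\mathbf{a}}(1) = (a_d + a_{d-2} + \cdots + a_1 + a_0)/a_0$ lies in one of finitely many degenerate loci, such as $\{0,\pm 1\}$), so there $\lambda_1(\psi_{\mathbf{a}}) \le \hat{h}_{\psi_{\mathbf{a}}}(0)$. Now $\psi_{\mathbf{a}}(0) = 1$ is a $p$-adic unit with residue $1 \notin B_p$ for every $p \mid a_0$ not dividing $a_d + a_{d-2} + \cdots + a_1$; hence $k_p(0) = 1$ for all such $p$, and as for generic $\mathbf{a}$ the remaining primes contribute only $o(\log X)$ to $\log|a_0|$, one gets $\sum_{p\mid a_0}\hat{\lambda}_{p}(0) = \log|a_0|/(d(d-1)) + o(\log X)$. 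The archimedean term $\hat{\lambda}_{\infty}(0) = d^{-2}g_{\infty}(\psi_{\mathbf{a}}(1))$, with $g_{\infty}$ the Green's function of the filled Julia set, is $O(\varepsilon\log X) + o(\log X)$ for generic $\mathbf{a}$ since then both the filled Julia set and $\psi_{\mathbf{a}}(1)$ lie in a disc of radius $X^{O(\varepsilon)}$; the primes dividing $a_d$ but not $a_0$ contribute nothing because the orbit of $0$ remains integral there. Finally $X^{1-\varepsilon} \le |a_0| \le \mathscr{H}(\psi_{\mathbf{a}}) \le X$ for all but $o(X^{d})$ of the $\mathbf{a}$, whence $\hat{h}_{\psi_{\mathbf{a}}}(0) = \log\mathscr{H}(\psi_{\mathbf{a}})/(d(d-1)) + O(\varepsilon\log\mathscr{H}(\psi_{\mathbf{a}})) + o(\log\mathscr{H}(\psi_{\mathbf{a}}))$, which gives the upper bound.

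For the lower bound, fix $\psi_{\mathbf{a}}$ in the generic set where $|a_0| \ge X^{1-\varepsilon}$, the powerful parts of $a_0$ and $a_d$ are at most $X^{\varepsilon}$, and the filled Julia set is small. For any $z \in \mathbb{Q}$, keeping only the places dividing $a_0$,
\[
\hat{h}_{\psi_{\mathbf{a}}}(z) \ \ge\ \sum_{p\mid a_0}\frac{v_p(a_0)\log p}{d^{k_p(z)}(d-1)} \ \ge\ \frac{1}{d(d-1)}\bigl(\log|a_0| - D(z)\bigr),\qquad D(z) := \sum_{\substack{p\mid a_0\\ k_p(z)\ge 2}}v_p(a_0)\log p,
\]
so it is enough to show that, outside a set of $\mathbf{a}$ of size $o(X^{d})$, no non-preperiodic $z$ has $D(z) > \varepsilon d(d-1)\log X$. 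When $d = 2$ one has $B_p = \{0\}$ and a short computation shows $k_p(z) \le 1$ for every $p$-integral $z$, so $D(z) = 0$ and nothing further is needed; assume $d \ge 3$. Large-height points are harmless: one has $\hat{h}_{\psi_{\mathbf{a}}}(z) \ge h(z) - C(\psi_{\mathbf{a}})$ with $C(\psi_{\mathbf{a}}) \le \tfrac{1}{d-1}\log\mathscr{H}(\psi_{\mathbf{a}}) + O(\varepsilon\log\mathscr{H}(\psi_{\mathbf{a}}))$ — its non-archimedean part coming entirely from primes dividing $a_d$ — and, for a fixed $z = p/q$ in lowest terms, the sharper $\hat{h}_{\psi_{\mathbf{a}}}(z) \ge h(z) - \tfrac{1}{d-1}\bigl(h(z)+\varepsilon\log X\bigr) - O(\varepsilon\log X)$ using that the powerful part of $a_d$ is small; applying this also to $\psi_{\mathbf{a}}(z)$ via $\hat{h}_{\psi_{\mathbf{a}}}(z) = \tfrac1d\hat{h}_{\psi_{\mathbf{a}}}(\psi_{\mathbf{a}}(z))$, one reduces (using $d \ge 3$) to points $z$ with $h(z)$ and $h(\psi_{\mathbf{a}}(z))$ both $\ll_d \log X$, hence to $\ll X^{c_d}$ candidates with $c_d = c_d(d) > 0$. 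For such a $z$, the condition $k_p(z) \ge 1$ forces $p$ to divide the integer $N_z := a_d p^{d} + a_{d-2}p^{d-2}q^{2} + \cdots + a_1 pq^{d-1}$ obtained by evaluating a fixed linear form in $(a_d,a_{d-2},\dots,a_1)$ at $z$; then, using the bound on the powerful part of $a_0$, the hypothesis $D(z) > \varepsilon d(d-1)\log X$ produces a squarefree divisor $m > X^{\eta}$ of both $a_0$ and $N_z$, all of whose primes additionally satisfy $k_p(z) \ge 2$, i.e.\ impose the secondary congruence that $\psi_{\mathbf{a}}(z)$ again lie in $B_p$ modulo $p$. One then counts the $\mathbf{a}$ with $m \mid a_0$, $m \mid N_z$ and this secondary condition — the first two cutting the admissible $a_0$ and the admissible $(a_d,\dots,a_1)$ each by a factor $\asymp m$, the secondary one giving a further saving for which the smallness of $h(\psi_{\mathbf{a}}(z))$ is used — and, summing over the $\ll X^{c_d}$ points $z$ and the $\ll X^{o(1)}$ divisors $m$ and checking that the remaining contributions (non-squarefree $a_0$, primes dividing $a_d$, and the few $z$ sharing a factor with $a_0$ in numerator or denominator) are negligible, arrives at a bound $o(X^{d})$.

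The main obstacle is precisely this last counting step. The family of candidate points $z$ has size $X^{c_d}$ with $c_d > 0$ fixed, while the per-point saving extracted from the single inequality $D(z) > \varepsilon d(d-1)\log X$ is only a power $X^{\eta}$ with $\eta$ proportional to $\varepsilon$; consequently a bald union bound fails for small $\varepsilon$, and one must squeeze as much divisibility as possible out of the orbit of $z$ — the divisibility $m \mid N_z$ coming from $z \in B_p \bmod p$ and the further congruence coming from $\psi_{\mathbf{a}}(z) \in B_p \bmod p$ — and argue uniformly over $z$, while keeping precise quantitative control of the powerful parts of $a_0$ and $a_d$ and of the common factors of $z$ with $a_0$. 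Balancing the size of the candidate family against the depth of the divisibility it forces is where the degree genuinely enters, and is the technical heart of the argument.
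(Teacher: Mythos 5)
Your general strategy matches the paper's: the upper bound via the test point $z = 0$ (using $\psi_{\mathbf{a}}(0) = 1$), and the lower bound via the local decomposition of $\hat{h}_{\psi_{\mathbf{a}}}$ together with divisibility constraints on $a_0$. But the lower bound does not close as written, and the self-diagnosis in your final paragraph is accurate: the union bound fails with the inequality you chose. The fix is not to dig for secondary congruences, but to use a sharper version of the inequality you already wrote down.

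Your bound
\begin{equation*}
\hat{h}_{\psi_{\mathbf{a}}}(z) \ \ge\ \frac{1}{d(d-1)}\bigl(\log|a_0| - D(z)\bigr),\qquad D(z) = \sum_{\substack{p\mid a_0\\ k_p(z)\ge 2}}v_p(a_0)\log p,
\end{equation*}
is obtained by replacing the contribution $v_p(a_0)\log p / (d-1)$ of a prime $p$ with $k_p(z) = 0$ by the much smaller quantity $v_p(a_0)\log p/(d(d-1))$; this throws away a factor of $d$ precisely where the argument needs it. Under the hypothesis $\hat{h}_{\psi_{\mathbf{a}}}(z) \le \bigl(\tfrac{1}{d(d-1)} - \varepsilon\bigr)\log\mathscr{H}(\psi_{\mathbf{a}})$ this only forces $D(z) \gg \varepsilon\log X$, i.e.\ a divisor of $a_0$ of size $X^{O(\varepsilon)}$ dividing $\sigma_{\mathbf{a}}(x,y)$, and as you note this does not beat the $X^{1/2 + 2/(d(d-1)) + o(1)}$ candidate points $z$.

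What the paper's Lemma~\ref{Lemma a0} does instead is retain the factor $d$ distinguishing depth-$0$ from depth-$\ge 1$ primes, by applying Lemmas~\ref{Lemma non Archimedean equality} and~\ref{Lemma non Archimedean} at the point $\psi_{\mathbf{a}}(z)$ and pulling back via $\hat{\lambda}_{\psi_{\mathbf{a}},p}(\psi_{\mathbf{a}}(z)) = d\hat{\lambda}_{\psi_{\mathbf{a}},p}(z)$. Roughly, the primes $p\mid a_0$ at which $z \bmod p$ lies \emph{outside} your set $B_p$ contribute $v_p(a_0)\log p/(d-1)$ each to $\hat{h}_{\psi_{\mathbf{a}}}(z)$, so under the hypothesis their total contribution to $\log|a_0|$ is at most about $\log X/d$; consequently the depth-$\ge 1$ primes (not depth-$\ge 2$) account for a divisor $D$ of $\mathrm{s}_2(a_0)$ dividing $\sigma_{\mathbf{a}}(x,y)$ of size $\gg |a_0|/X^{1/d + O(\varepsilon)} \approx X^{1-1/d}$. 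With this much larger $D$ the linear congruence $\sigma_{\mathbf{a}}(x,y) \equiv 0 \pmod D$ saves a factor of order $D$ in the $a_1$-sum, and the remaining bookkeeping over $a_0$ (split into $2$-full and $2$-free parts), $x$, $y$, the divisors $D$, and $a_d$ — carried out in Lemma~\ref{Lemma useful} with $k = 2$ — produces a net saving $X^{1/2 - O(\varepsilon + \delta)}$, which does dominate the candidate count for $z$. So a bald union bound over $z$ does in fact succeed; what was missing was not extra orbit-depth information but the depth-$0$/depth-$1$ distinction in the primary inequality.

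There is also a smaller gap in the upper-bound half. You assert that $0 \in \mathrm{Prep}_{\mathbb{Q}}(\psi_{\mathbf{a}})$ fails only when $\psi_{\mathbf{a}}(1)$ lies in a finite degenerate set, and that this exceptional locus has size $O(X^{d-1})$. Neither claim is justified: the orbit of $0$ can close up in many ways (e.g.\ $\psi_{\mathbf{a}}^{2}(1) = 1$, $\psi_{\mathbf{a}}^{3}(1) = \psi_{\mathbf{a}}(1)$, \ldots), giving infinitely many loci, and there is no a priori reason the resulting set is $O(X^{d-1})$ rather than $o(X^d)$. The paper sidesteps this entirely: since $\lambda_1(\psi_{\mathbf{a}}) \le \hat{h}_{\psi_{\mathbf{a}}}(0) = \tfrac1d\hat{h}_{\psi_{\mathbf{a}}}(1) \le \tfrac{1}{d(d-1)}\log\mathscr{H}(\psi_{\mathbf{a}}) + O(1)$ whenever $0 \notin \mathrm{Prep}_{\mathbb{Q}}(\psi_{\mathbf{a}})$ (Lemma~\ref{Lemma lambda1}, from the telescoping comparison $\hat{h}_{\psi_{\mathbf{a}}} - h \le \tfrac{1}{d-1}\log(d\mathscr{H}(\psi_{\mathbf{a}}))$), one simply bounds $\#\{\psi_{\mathbf{a}} : 0 \in \mathrm{Prep}_{\mathbb{Q}}(\psi_{\mathbf{a}})\}$ by $\sum_{\psi_{\mathbf{a}}}\#\mathrm{Prep}_{\mathbb{Q}}(\psi_{\mathbf{a}})$ and invokes Theorem~\ref{Theorem UBC}. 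Your more elaborate Green's-function analysis of $\hat{\lambda}_{\infty}$ is correct in spirit but unnecessary given this cleaner global comparison.
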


Theorem~\ref{Theorem Lang} implies in particular that when ordered by height, $100 \%$ of the affine conjugacy classes of degree $d$ polynomials defined over $\mathbb{Q}$ satisfy an optimal version of Conjecture~\ref{Conjecture Lang}.

We are actually able to provide a much deeper investigation of the sets of values of canonical heights. Given $\psi_{\mathbf{a}} \in \mathscr{P}_d$, we thus proceed to generalize the definition \eqref{Definition lambda1} of the quantity $\lambda_1(\psi_{\mathbf{a}})$ by introducing the dynamical successive minima of the polynomial $\psi_{\mathbf{a}}$. Given $N \geq 1$, inspired by Baker and DeMarco \cite[page~9]{MR3141413} we say that an element of $\mathbb{Q}^N$ is $\psi_{\mathbf{a}}$-dynamically independent if it does not satisfy any algebraic relation given by a nonzero polynomial $\mathbf{P} \in \overline{\mathbb{Q}}[X_1, \dots, X_N]$ with the property that
\begin{equation}
\label{Property independence}
\forall (\xi_1, \dots, \xi_N) \in \overline{\mathbb{Q}}^N \ \ \ \mathbf{P}(\xi_1, \dots, \xi_N) = 0 \implies \mathbf{P}(\psi_{\mathbf{a}}(\xi_1), \dots, \psi_{\mathbf{a}}(\xi_N)) = 0.
\end{equation}
Motivated by the analogy with the successive minima of a lattice, we introduce the following definition.

\begin{definition}
\label{Definition successive}
Let $d \geq 2$ and $\psi_{\mathbf{a}} \in \mathscr{P}_d$. For $N \geq 1$, the $N$-th dynamical successive minimum $\lambda_N(\psi_{\mathbf{a}})$ of the polynomial $\psi_{\mathbf{a}}$ is
\begin{equation*}
\lambda_N(\psi_{\mathbf{a}}) = \min \left\{ \max_{i \in \{1, \dots, N \}} \hat{h}_{\psi_{\mathbf{a}}}(z_i) : (z_1, \dots, z_N) \in \mathscr{I}_N(\psi_{\mathbf{a}}) \right\},
\end{equation*}
where $\mathscr{I}_N(\psi_{\mathbf{a}})$ denotes the set of $\psi_{\mathbf{a}}$-dynamically independent elements of $\mathbb{Q}^N$.
\end{definition}

It is worth pointing out that the dynamical successive minima of a polynomial are invariant under the action by conjugacy of the affine group. In addition, it is easy to check that $z \in \mathbb{Q}$ is $\psi_{\mathbf{a}}$-dynamically independent if and only if $z \notin \mathrm{Prep}_{\mathbb{Q}}(\psi_{\mathbf{a}})$. We thus see that Definition~\ref{Definition successive} in the case $N=1$ agrees with the definition \eqref{Definition lambda1}.

Given $\psi_{\mathbf{a}} \in \mathscr{P}_d$ and $N \geq 2$, Proposition~\ref{Proposition lambda} provides us with lower and upper bounds for the quantity $\lambda_N(\psi_{\mathbf{a}})$ under a mild assumption on $\psi_{\mathbf{a}}$. However, it should not be surprising that the lower bound is only of interest when $\log N$ is large compared to $\log \mathscr{H}(\psi_{\mathbf{a}})$, and when this is not the case we have to investigate this problem from a statistical point of view. Our methods will prove to be powerful enough to allow us to handle the situation where the integer $N$ grows with the height of the polynomial $\psi_{\mathbf{a}}$, as stated in Proposition~\ref{Proposition lambda statistical}. The full strength of our techniques is demonstrated by Propositions~\ref{Proposition lambda} and \ref{Proposition lambda statistical} but we have decided to defer the statement of these results to Section~\ref{Section higher} in order to avoid introducing too many technical details here. Instead, we present a direct corollary of Theorems~\ref{Theorem UBC} and \ref{Theorem Lang} and Propositions~\ref{Proposition lambda} and \ref{Proposition lambda statistical} which deals with the case where the integer $N$ is fixed.

\begin{theorem}
\label{Theorem N fixed}
Let $d \geq 3$ and $\varepsilon > 0$. Let also $N \geq 3$. We have
\begin{equation*}
\lim_{X \to \infty} \frac1{\# \mathscr{P}_d(X)} \cdot
\# \left\{ \psi_{\mathbf{a}} \in \mathscr{P}_d(X) :
d \left( 1 - \frac{4}{d+3} \right) - \varepsilon < \frac{\lambda_2(\psi_{\mathbf{a}})}{\lambda_1(\psi_{\mathbf{a}})} \leq d + \varepsilon \right\} = 1,
\end{equation*}
and
\begin{equation*}
\lim_{X \to \infty} \frac1{\# \mathscr{P}_d(X)} \cdot
\# \left\{ \psi_{\mathbf{a}} \in \mathscr{P}_d(X) :
1 \leq \frac{\lambda_N(\psi_{\mathbf{a}})}{\lambda_2(\psi_{\mathbf{a}})} \leq 1 + \frac{4}{d-1} + \varepsilon \right\} = 1.
\end{equation*}
\end{theorem}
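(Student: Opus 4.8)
The plan is to read off both density-$1$ statements mechanically from the generic size of $\lambda_1$ provided by Theorem~\ref{Theorem Lang}, together with the generic bounds for $\lambda_2$ and $\lambda_N$ provided by Propositions~\ref{Proposition lambda} and~\ref{Proposition lambda statistical}, after first using Theorem~\ref{Theorem UBC} to throw away the polynomials carrying a rational preperiodic point. Concretely, I would fix an auxiliary parameter $\delta>0$, to be chosen small in terms of $d$ and $\varepsilon$ at the very end, and record four events, each of density $1$ in $\mathscr{P}_d(X)$ as $X\to\infty$ and hence with intersection of density $1$. First, by Theorem~\ref{Theorem UBC} the set of $\psi_{\mathbf{a}}$ with $\mathrm{Prep}_{\mathbb{Q}}(\psi_{\mathbf{a}})=\varnothing$ has density $1$; on this set every rational number is $\psi_{\mathbf{a}}$-dynamically independent, so $\lambda_1(\psi_{\mathbf{a}})=\min\{\hat h_{\psi_{\mathbf{a}}}(z):z\in\mathbb{Q}\}$, the sets $\mathscr{I}_N(\psi_{\mathbf{a}})$ are nonempty (e.g.\ by Proposition~\ref{Proposition lambda}) so all $\lambda_N(\psi_{\mathbf{a}})$ are finite, and the sequence $(\lambda_N(\psi_{\mathbf{a}}))_{N\ge1}$ is non-decreasing, since if $(z_1,\dots,z_N)\in\mathscr{I}_N(\psi_{\mathbf{a}})$ then already $(z_1,\dots,z_{N-1})\in\mathscr{I}_{N-1}(\psi_{\mathbf{a}})$ (a nonzero $\mathbf{P}\in\overline{\mathbb{Q}}[X_1,\dots,X_{N-1}]$ with property~\eqref{Property independence} is one in $X_1,\dots,X_N$ with the same property). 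In particular $\lambda_N(\psi_{\mathbf{a}})/\lambda_2(\psi_{\mathbf{a}})\ge1$ on this set, which is the lower inequality in the second displayed statement. Second, by Theorem~\ref{Theorem Lang} applied with $\delta$ in place of $\varepsilon$, on a set of density $1$ one has $\bigl|\lambda_1(\psi_{\mathbf{a}})-\frac{\log\mathscr{H}(\psi_{\mathbf{a}})}{d(d-1)}\bigr|\le\delta\log\mathscr{H}(\psi_{\mathbf{a}})$. Third and fourth, by Propositions~\ref{Proposition lambda} and~\ref{Proposition lambda statistical} applied to $\lambda_2$ and to $\lambda_N$ with the given fixed value of $N$ (so that all terms involving $\log N$ there are bounded in terms of $N$ alone, hence $o(\log\mathscr{H}(\psi_{\mathbf{a}}))$, since $\log\mathscr{H}(\psi_{\mathbf{a}})\to\infty$ for $100\%$ of $\psi_{\mathbf{a}}\in\mathscr{P}_d(X)$), on a set of density $1$ both $\lambda_2(\psi_{\mathbf{a}})$ and $\lambda_N(\psi_{\mathbf{a}})$ lie in the window $\bigl[(\tfrac1{d+3}-\delta)\log\mathscr{H}(\psi_{\mathbf{a}}),\,(\tfrac1{d-1}+\delta)\log\mathscr{H}(\psi_{\mathbf{a}})\bigr]$.

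On the density-$1$ set where these four events occur simultaneously I would simply divide: the estimates for $\lambda_2$ over those for $\lambda_1$, and the upper estimate for $\lambda_N$ over the lower estimate for $\lambda_2$, yield
\begin{equation*}
\frac{\tfrac{1}{d+3}-\delta}{\tfrac{1}{d(d-1)}+\delta}<\frac{\lambda_2(\psi_{\mathbf{a}})}{\lambda_1(\psi_{\mathbf{a}})}\le\frac{\tfrac{1}{d-1}+\delta}{\tfrac{1}{d(d-1)}-\delta},\qquad 1\le\frac{\lambda_N(\psi_{\mathbf{a}})}{\lambda_2(\psi_{\mathbf{a}})}\le\frac{\tfrac{1}{d-1}+\delta}{\tfrac{1}{d+3}-\delta}.
\end{equation*}
As $\delta\to0$ the outer bounds tend to $\frac{d(d-1)}{d+3}=d\bigl(1-\frac4{d+3}\bigr)$ and $d$ in the first chain, and to $1$ and $\frac{d+3}{d-1}=1+\frac4{d-1}$ in the second; hence taking $\delta$ small enough in terms of $d$ and $\varepsilon$ forces both ratios into the intervals of the statement, for all $\psi_{\mathbf{a}}$ in a set of density $1$. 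This would prove the theorem.

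The argument is thus a pure combination of the cited results. The only step needing some attention is the reduction of Propositions~\ref{Proposition lambda} and~\ref{Proposition lambda statistical} to the regime of bounded $N$: one has to confirm that, once $N$ is fixed, the error terms in those propositions are genuinely negligible against $\log\mathscr{H}(\psi_{\mathbf{a}})$, and in particular that the lower bound of Proposition~\ref{Proposition lambda} --- which becomes effective only when $\log N$ is large compared with $\log\mathscr{H}(\psi_{\mathbf{a}})$ --- is in the present regime dominated by the statistical lower bound for $\lambda_2$ coming from Proposition~\ref{Proposition lambda statistical}. Beyond this bookkeeping I anticipate no real obstacle.
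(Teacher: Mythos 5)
Your proposal is correct and follows essentially the same route as the paper's proof: discard the density-zero set of polynomials with a rational preperiodic point via Theorem~\ref{Theorem UBC}, pin down $\lambda_1$ via Theorem~\ref{Theorem Lang}, bound $\lambda_2$ and $\lambda_N$ from above via the upper bound of Proposition~\ref{Proposition lambda} (with $\log N$ absorbed into the $O(1)$ for fixed $N$), bound $\lambda_2$ from below via Proposition~\ref{Proposition lambda statistical} with $t=0$, use monotonicity for $\lambda_N\ge\lambda_2$, and then divide. The only cosmetic difference is that the paper packages the estimates as bounds on the ratios $\lambda_2/\lambda_1$ and $\lambda_N/\lambda_1$ before taking the quotient, whereas you record windows for $\lambda_1,\lambda_2,\lambda_N$ directly; the arithmetic and the density-one intersections are the same.
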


It is especially enlightening to restrict Theorem~\ref{Theorem N fixed} to the case where $d$ is assumed to be large as both statements then become optimal. More precisely, it follows in particular from Theorem~\ref{Theorem N fixed} that for any $\varepsilon > 0$, there exists $D \geq 3$ such that for any $d \geq D$ and $N \geq 3$, we have
\begin{equation*}
\lim_{X \to \infty} \frac1{\# \mathscr{P}_d(X)} \cdot
\# \left\{ \psi_{\mathbf{a}} \in \mathscr{P}_d(X) :
\left| \frac{\lambda_2(\psi_{\mathbf{a}})}{\lambda_1(\psi_{\mathbf{a}})} - d \right| \leq \varepsilon d \right\} = 1,
\end{equation*}
and
\begin{equation*}
\lim_{X \to \infty} \frac1{\# \mathscr{P}_d(X)} \cdot
\# \left\{ \psi_{\mathbf{a}} \in \mathscr{P}_d(X) :
0 \leq \frac{\lambda_N(\psi_{\mathbf{a}})}{\lambda_2(\psi_{\mathbf{a}})} - 1 \leq \varepsilon \right\} = 1.
\end{equation*}
In other words, if $d$ is somewhat large then when ordered by height, $100 \%$ of the affine conjugacy classes of degree $d$ polynomials defined over $\mathbb{Q}$ have their second dynamical successive minimum which is about $d$ times larger than their first, while an arbitrarily large number of the next successive minima essentially have the same size.

We now proceed to give a quick sketch of the proof of our results. To establish Theorems~\ref{Theorem UBC} and \ref{Theorem Lang}, as well as the lower bound in Proposition~\ref{Proposition lambda}, and Proposition~\ref{Proposition lambda statistical}, we make key use of the decomposition of the canonical height into local canonical heights, along with the fact that infinity is a superattracting fixed point of any polynomial. Broadly speaking, this allows us to show that most of the time if a polynomial $\psi_{\mathbf{a}} \in \mathscr{P}_d$ has a rational preperiodic point or has an excessively small first dynamical successive minimum, then the integer $a_0$ has to be far from being $k$-free, where $k \geq 2$ is a suitable integer. If there were no restrictions on the choice of $k$, then in Theorem~\ref{Theorem UBC} one could in theory obtain the essentially optimal saving $X^{1-\varepsilon}$, for any $\varepsilon > 0$. Unfortunately, the decomposition of the canonical height into local canonical heights introduces error terms which reflect the fact that the dynamics of a polynomial $\psi_{\mathbf{a}} \in \mathscr{P}_d$ is hard to analyze whenever the prime factorizations of the integers $a_0$ and $a_d$ are intimately linked. Moreover, these error terms become more and more problematic as the integer $k$ grows, which ultimately prevents us from getting closer to the conjectural estimate \eqref{Estimate conjecture}.

Finally, it is worth stressing that along the proof of the upper bound in Proposition~\ref{Proposition lambda} we crucially appeal to the celebrated work of Medvedev and Scanlon \cite{MR3126567}. More precisely, we use the fact that if $\psi_{\mathbf{a}} \in \mathscr{P}_d$ does not belong to a short list of exceptional cases, then the only irreducible algebraic subsets of $\mathbb{A}^N(\overline{\mathbb{Q}})$ defined by polynomials satisfying the property \eqref{Property independence} are those which are defined by polynomials $\mathbf{P} \in \overline{\mathbb{Q}}[X_1, \dots, X_N]$ of the shape
\begin{equation*}
\mathbf{P}(X_1, \dots, X_N) = X_i - g(X_j),
\end{equation*}
for some $i, j \in \{1, \dots, N\}$ and $g \in \overline{\mathbb{Q}}[z]$ commuting with $\psi_{\mathbf{a}}$. For polynomials $\psi_{\mathbf{a}} \in \mathscr{P}_d$ satisfying a mild assumption this allows us to exhibit many $\psi_{\mathbf{a}}$-dynamically independent vectors of rational numbers of controlled Weil height, and we can thus conclude by using a classical upper bound for the difference between the canonical height with respect to $\psi_{\mathbf{a}}$ and the Weil height.

It may be useful to record here that Theorems~\ref{Theorem UBC} and \ref{Theorem Lang} are respectively established in Sections~\ref{Section UBC} and \ref{Section Lang}, while Propositions~\ref{Proposition lambda} and \ref{Proposition lambda statistical} and Theorem~\ref{Theorem N fixed} are proved in Section~\ref{Section higher}.

We finish this introduction by mentioning that we believe that our techniques are robust enough to prove analogous results for general number fields. However, there is no doubt that generalizing certain of the tools that we use would have made our proofs much longer and much more intricate. For the sake of concision and clarity we have thus decided not to follow this path.

\subsection*{Acknowledgements}

The research of the first-named author is integrally funded by the Swiss National Science Foundation through the SNSF Professorship number $170565$ awarded to the project \textit{Height of rational points on algebraic varieties}. The research of the second-named author was also funded by this grant during the period that she spent at the University of Basel. Both the financial support of the SNSF and the perfect working conditions provided by the University of Basel are gratefully acknowledged. It is a pleasure for the authors to thank Laura DeMarco, Dragos Ghioca, Fabien Pazuki and Joe Silverman for their interest and for several insightful comments on an earlier version of this manuscript.

\section{Preliminaries}

\subsection{Inequalities involving the local canonical heights}

We start by introducing some classical notation. Given a prime number $p$, we let $|\cdot|_p$ denote the usual $p$-adic absolute value. We thus have $|0|_p=0$ and if we let $v_p(z)$ be the $p$-adic valuation of $z \in \mathbb{Q}^{\times}$ then we have
\begin{equation*}
|z|_p = \frac1{p^{v_p(z)}}.
\end{equation*}

Let $d \geq 2$ and $\psi_{\mathbf{a}} \in \mathscr{P}_d$. Letting $v$ denote either a prime number $p$ or $\infty$, we recall that the local canonical heights $\hat{\lambda}_{\psi_{\mathbf{a}},v} : \mathbb{Q} \to \mathbb{R}_{\geq 0}$ of the polynomial $\psi_{\mathbf{a}}$ are defined by setting
\begin{equation}
\label{Definition local}
\hat{\lambda}_{\psi_{\mathbf{a}},v}(z) = \lim_{n \to \infty} \frac1{d^n} \log \max \left\{ |\psi_{\mathbf{a}}^n(z)|_v, 1 \right\}.
\end{equation}
We will make frequent use of the fact that for any $z \in \mathbb{Q}$ we have $\hat{\lambda}_{\psi_{\mathbf{a}},v}(z) \geq 0$, and it will also be useful to note that
\begin{equation}
\label{Equality local height}
\hat{\lambda}_{\psi_{\mathbf{a}},v}(\psi_{\mathbf{a}}(z)) = d \cdot \hat{\lambda}_{\psi_{\mathbf{a}},v}(z).
\end{equation}
Moreover, it follows from the product formula that
\begin{equation}
\label{Equality product formula}
\hat{h}_{\psi_{\mathbf{a}}} = \sum_p \hat{\lambda}_{\psi_{\mathbf{a}},p} + \hat{\lambda}_{\psi_{\mathbf{a}},\infty}.
\end{equation}
In addition, we set
\begin{equation}
\label{Definition r}
r_v(\psi_{\mathbf{a}}) = \max \left\{ \left|\frac{a_0}{a_d}\right|_v^{1/(d-1)}, \max_{i \in \{0, \dots, d-2 \}} \left| \frac{a_i}{a_d} \right|_v^{1/(d-i)} \right\}.
\end{equation}
We note here that we simply write $| \cdot |$ to denote the Archimedean absolute value $| \cdot |_{\infty}$.

The three following results are classical but we include their proofs for completeness. They crucially rely on the fact that infinity is a superattracting fixed point of any polynomial. Once combined, they imply in particular that given a polynomial $\psi_{\mathbf{a}} \in \mathscr{P}_d$, most rational numbers have large canonical height with respect to $\psi_{\mathbf{a}}$.

\begin{lemma}
\label{Lemma non Archimedean equality}
Let $d \geq 2$ and $\psi_{\mathbf{a}} \in \mathscr{P}_d$. Let also $p$ be a prime number. If $z \in \mathbb{Q}^{\times}$ satisfies $|z|_p>r_p(\psi_{\mathbf{a}})$ then
\begin{equation*}
\hat{\lambda}_{\psi_{\mathbf{a}},p}(z) - \log |z|_p = \frac1{d-1} \log \left|\frac{a_d}{a_0}\right|_p.
\end{equation*}
\end{lemma}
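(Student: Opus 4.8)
The plan is to use that the condition $|z|_p > r_p(\psi_{\mathbf{a}})$ forces the leading monomial of $\psi_{\mathbf{a}}$ to dominate $p$-adically, so that along the forward orbit of $z$ the map $\psi_{\mathbf{a}}$ behaves like $w \mapsto (a_d/a_0)\,w^d$ and the local canonical height can be computed by an explicit geometric series.

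First I would extract from the hypothesis the single identity $|\psi_{\mathbf{a}}(z)|_p = |a_d/a_0|_p\,|z|_p^{d}$, valid for every $z \in \mathbb{Q}^{\times}$ with $|z|_p > r_p(\psi_{\mathbf{a}})$. Writing $\psi_{\mathbf{a}}(z) = a_0^{-1}\bigl(a_d z^d + a_{d-2}z^{d-2} + \cdots + a_1 z + a_0\bigr)$, the inequalities $|z|_p > r_p(\psi_{\mathbf{a}}) \geq |a_i/a_d|_p^{1/(d-i)}$ for $i \in \{0,\dots,d-2\}$ rearrange to $|a_d z^d|_p > |a_i z^i|_p$; since there is no $z^{d-1}$ term, the leading monomial strictly dominates every other term, and the ultrametric inequality yields the claimed value of $|\psi_{\mathbf{a}}(z)|_p$. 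Along the way I would also record the two auxiliary inequalities $|\psi_{\mathbf{a}}(z)|_p > 1$ and $|\psi_{\mathbf{a}}(z)|_p > |z|_p$; the first follows from the term $|a_0/a_d|_p^{1/d}$ (the case $i = 0$) and the second from the term $|a_0/a_d|_p^{1/(d-1)}$ appearing in the definition \eqref{Definition r} of $r_p(\psi_{\mathbf{a}})$.

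Next I would propagate this along the orbit by induction. Put $z_0 = z$ and $z_{n+1} = \psi_{\mathbf{a}}(z_n)$. The inequality $|\psi_{\mathbf{a}}(w)|_p > |w|_p$, valid whenever $|w|_p > r_p(\psi_{\mathbf{a}})$, shows that $(|z_n|_p)_{n\ge 0}$ is strictly increasing, so $|z_n|_p > r_p(\psi_{\mathbf{a}})$ and, for $n\ge 1$, also $|z_n|_p > 1$; hence $\max\{|\psi_{\mathbf{a}}^n(z)|_p,1\} = |z_n|_p$ for $n\ge 1$ and $|z_{n+1}|_p = |a_d/a_0|_p\,|z_n|_p^{d}$ for all $n\ge 0$. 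Setting $u_n = \log|z_n|_p$ and $L = \log|a_d/a_0|_p$, the recursion $u_{n+1} = d\,u_n + L$ solves as $u_n = d^n u_0 + L(d^n-1)/(d-1)$, so $d^{-n}\log\max\{|\psi_{\mathbf{a}}^n(z)|_p,1\} = u_0 + L(1 - d^{-n})/(d-1) \to \log|z|_p + \tfrac1{d-1}\log|a_d/a_0|_p$ as $n\to\infty$. Comparing with the definition \eqref{Definition local} of $\hat{\lambda}_{\psi_{\mathbf{a}},p}$ gives the lemma.

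There is no serious obstacle here; the only point requiring a little care is the bookkeeping around the constant term of $\psi_{\mathbf{a}}$, which equals $1$ rather than $0$. This is why one needs $|\psi_{\mathbf{a}}^n(z)|_p \geq 1$ to ensure the maximum in \eqref{Definition local} is attained at its first argument, and in turn why the definition of $r_p(\psi_{\mathbf{a}})$ records the quantity $|a_0/a_d|_p^{1/(d-1)}$ separately from the terms $|a_i/a_d|_p^{1/(d-i)}$; I would verify that these ingredients are indeed present in \eqref{Definition r} before assembling the argument.
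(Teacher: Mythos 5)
Your proposal is correct and follows essentially the same approach as the paper's proof: use the ultrametric inequality to show the leading monomial dominates (which requires the absence of a $z^{d-1}$ term and the extra $|a_0/a_d|_p^{1/(d-1)}$ in \eqref{Definition r}), propagate the resulting identity $|\psi_{\mathbf{a}}(z)|_p = |a_d/a_0|_p\,|z|_p^d$ along the orbit to get the closed-form for $|\psi_{\mathbf{a}}^n(z)|_p$, and observe that the max in \eqref{Definition local} is eventually attained at its first argument before taking the limit. The paper states the closed-form directly rather than solving the linear recursion in $u_n$, but the content is identical.
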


\begin{proof}
The assumption $|z|_p > r_p(\psi_{\mathbf{a}})$ implies that for any $i \in \{0, \dots, d-2 \}$ we have
\begin{equation*}
\left|\frac{a_d}{a_0} z^d\right|_p > \left|\frac{a_i}{a_0} z^i \right|_p.
\end{equation*}
Hence, recalling the definition \eqref{Definition psi} of the polynomial $\psi_{\mathbf{a}}$ we observe that
\begin{equation*}
|\psi_{\mathbf{a}}(z)|_p = \left|\frac{a_d}{a_0} z^d\right|_p.
\end{equation*}
Therefore, it follows from the assumption $|z|_p > r_p(\psi_{\mathbf{a}})$ that $|\psi_{\mathbf{a}}(z)|_p \geq |z|_p$, and an elementary induction thus shows that for any $n \geq 1$, we have 
\begin{equation*}
|\psi_{\mathbf{a}}^n(z)|_p = \left|\frac{a_d}{a_0}\right|_p^{(d^n-1)/(d-1)} |z|_p^{d^n}.
\end{equation*}
In addition, the assumption $|z|_p>r_p(\psi_{\mathbf{a}})$ also yields
\begin{equation*}
\left|\frac{a_d}{a_0}\right|_p^{1/(d-1)} |z|_p > 1.
\end{equation*}
As a result, we conclude that if $n$ is large enough then $|\psi_{\mathbf{a}}^n(z)|_p >1$. Recalling the definition \eqref{Definition local} of the local canonical height $\hat{\lambda}_{\psi_{\mathbf{a}},p}$, we see that this completes the proof.
\end{proof}

The following statement is a straightforward consequence of Lemma~\ref{Lemma non Archimedean equality}.

\begin{lemma}
\label{Lemma non Archimedean}
Let $d \geq 2$ and $\psi_{\mathbf{a}} \in \mathscr{P}_d$. Let also $p$ be a prime number. For any $z \in \mathbb{Q}^{\times}$, we have
\begin{equation*}
\hat{\lambda}_{\psi_{\mathbf{a}},p}(z) - \log |z|_p \geq - \log r_p(\psi_{\mathbf{a}}).
\end{equation*}
\end{lemma}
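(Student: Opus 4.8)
The plan is to argue by a simple dichotomy on the size of $|z|_p$ relative to $r_p(\psi_{\mathbf{a}})$, using Lemma~\ref{Lemma non Archimedean equality} in one case and the nonnegativity of the local canonical height in the other.

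First, suppose $|z|_p > r_p(\psi_{\mathbf{a}})$. Then Lemma~\ref{Lemma non Archimedean equality} applies directly and gives $\hat{\lambda}_{\psi_{\mathbf{a}},p}(z) - \log|z|_p = \tfrac{1}{d-1}\log\left|\tfrac{a_d}{a_0}\right|_p$. On the other hand, the definition \eqref{Definition r} of $r_p(\psi_{\mathbf{a}})$ includes the term $\left|\tfrac{a_0}{a_d}\right|_p^{1/(d-1)}$, so that $\log r_p(\psi_{\mathbf{a}}) \geq \tfrac{1}{d-1}\log\left|\tfrac{a_0}{a_d}\right|_p = -\tfrac{1}{d-1}\log\left|\tfrac{a_d}{a_0}\right|_p$, whence $-\log r_p(\psi_{\mathbf{a}}) \leq \tfrac{1}{d-1}\log\left|\tfrac{a_d}{a_0}\right|_p$. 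Combining the two displayed facts yields the claimed inequality in this case (with equality precisely when the maximum defining $r_p(\psi_{\mathbf{a}})$ is attained at the $\left|\tfrac{a_0}{a_d}\right|_p^{1/(d-1)}$ term).

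Second, suppose $0 < |z|_p \leq r_p(\psi_{\mathbf{a}})$, which makes sense since $z \in \mathbb{Q}^{\times}$ guarantees $|z|_p > 0$ and $\log|z|_p$ finite. Taking logarithms gives $-\log|z|_p \geq -\log r_p(\psi_{\mathbf{a}})$, and since $\hat{\lambda}_{\psi_{\mathbf{a}},p}(z) \geq 0$ by \eqref{Definition local}, we conclude that $\hat{\lambda}_{\psi_{\mathbf{a}},p}(z) - \log|z|_p \geq -\log|z|_p \geq -\log r_p(\psi_{\mathbf{a}})$. As the two cases are exhaustive, the lemma follows. There is no real obstacle here; the only thing worth noting is that the bound has to be extracted from the particular term $\left|\tfrac{a_0}{a_d}\right|_p^{1/(d-1)}$ in the definition of $r_p(\psi_{\mathbf{a}})$, which is exactly the quantity matching the height defect computed in Lemma~\ref{Lemma non Archimedean equality}.
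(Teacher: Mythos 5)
Your proof is correct and follows exactly the same dichotomy as the paper: for $|z|_p \leq r_p(\psi_{\mathbf{a}})$ use nonnegativity of the local canonical height, and for $|z|_p > r_p(\psi_{\mathbf{a}})$ apply Lemma~\ref{Lemma non Archimedean equality} together with the inequality $\tfrac{1}{d-1}\log\left|\tfrac{a_d}{a_0}\right|_p \geq -\log r_p(\psi_{\mathbf{a}})$ coming from the definition of $r_p(\psi_{\mathbf{a}})$. You have merely spelled out the details of that last inequality a bit more explicitly than the paper does.
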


\begin{proof}
We start by noting that the statement is clear when $|z|_p \leq r_p(\psi_{\mathbf{a}})$. Moreover, when $|z|_p>r_p(\psi_{\mathbf{a}})$ it directly follows from Lemma~\ref{Lemma non Archimedean equality} and the inequality
\begin{equation*}
\frac1{d-1} \log \left|\frac{a_d}{a_0}\right|_p \geq -\log r_p(\psi_{\mathbf{a}}),
\end{equation*}
which completes the proof.
\end{proof}

Our next result is the Archimedean analog of Lemma~\ref{Lemma non Archimedean}.

\begin{lemma}
\label{Lemma Archimedean}
Let $d \geq 2$ and $\psi_{\mathbf{a}} \in \mathscr{P}_d$. For any $z \in \mathbb{Q}^{\times}$, we have
\begin{equation*}
\hat{\lambda}_{\psi_{\mathbf{a}},\infty}(z) - \log|z| \geq - \log r_{\infty}(\psi_{\mathbf{a}}) - \log d.
\end{equation*}
\end{lemma}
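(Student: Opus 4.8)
The plan is to split according to whether $|z|$ is at most $d \cdot r_{\infty}(\psi_{\mathbf{a}})$ or larger. If $|z| \leq d \cdot r_{\infty}(\psi_{\mathbf{a}})$, the inequality is immediate: since $\hat{\lambda}_{\psi_{\mathbf{a}},\infty}(z) \geq 0$ and $\log |z| \leq \log d + \log r_{\infty}(\psi_{\mathbf{a}})$, we get $\hat{\lambda}_{\psi_{\mathbf{a}},\infty}(z) - \log|z| \geq -\log|z| \geq -\log r_{\infty}(\psi_{\mathbf{a}}) - \log d$. All the work is in the range $|z| > d \cdot r_{\infty}(\psi_{\mathbf{a}})$, where infinity being superattracting will drive the orbit of $z$ to infinity.

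In that range I would first prove an Archimedean analog of the key step of Lemma~\ref{Lemma non Archimedean equality}: for every $w \in \mathbb{Q}$ with $|w| \geq d \cdot r_{\infty}(\psi_{\mathbf{a}})$, the leading monomial dominates the remaining terms with enough room that the triangle inequality yields
\[
|\psi_{\mathbf{a}}(w)| \geq \left(1 - \frac{1}{d}\right)\left|\frac{a_d}{a_0}\right||w|^d.
\]
Indeed, the definition of $r_{\infty}(\psi_{\mathbf{a}})$ gives $|a_i/a_d| \leq r_{\infty}(\psi_{\mathbf{a}})^{d-i}$ for $0 \leq i \leq d-2$, so each of the $d-1$ lower-order terms of $\psi_{\mathbf{a}}$ (including the constant term) has absolute value at most $d^{-2}$ times $|(a_d/a_0)w^d|$; summing and using $(d-1)/d^2 \leq 1/d$ gives the claim. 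A short computation using in addition $|a_d/a_0| \geq r_{\infty}(\psi_{\mathbf{a}})^{-(d-1)}$ shows $(1-1/d)|a_d/a_0||w|^d \geq |w|$ as soon as $|w| \geq d \cdot r_{\infty}(\psi_{\mathbf{a}})$, so by induction $|\psi_{\mathbf{a}}^n(z)| \geq d \cdot r_{\infty}(\psi_{\mathbf{a}})$ for all $n \geq 0$ and the displayed bound applies along the entire orbit.

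Passing to logarithms, set $u_n = \log|\psi_{\mathbf{a}}^n(z)|$; the bound reads $u_{n+1} \geq d\, u_n + \log(1-1/d) + \log|a_d/a_0|$. Solving this linear recursion about its fixed point $u^{*} = \tfrac{1}{d-1}\bigl(\log\tfrac{d}{d-1} - \log|a_d/a_0|\bigr)$ yields $u_n - u^{*} \geq d^n(u_0 - u^{*})$; since $u_0 = \log|z| > \log\bigl(d\, r_{\infty}(\psi_{\mathbf{a}})\bigr) \geq u^{*}$ (the last step once more from $|a_d/a_0| \geq r_{\infty}(\psi_{\mathbf{a}})^{-(d-1)}$ together with $\tfrac{1}{d-1}\log\tfrac{d}{d-1} \leq \log d$), the orbit escapes to infinity, so $\log^+|\psi_{\mathbf{a}}^n(z)| = u_n$ for $n$ large and, by the definition \eqref{Definition local},
\[
\hat{\lambda}_{\psi_{\mathbf{a}},\infty}(z) = \lim_{n \to \infty} \frac{u_n}{d^n} \geq u_0 - u^{*}.
\]
Rearranging gives $\hat{\lambda}_{\psi_{\mathbf{a}},\infty}(z) - \log|z| \geq -u^{*} = \tfrac{1}{d-1}\log|a_d/a_0| - \tfrac{1}{d-1}\log\tfrac{d}{d-1}$, and one concludes by the two elementary bounds $\tfrac{1}{d-1}\log|a_d/a_0| \geq -\log r_{\infty}(\psi_{\mathbf{a}})$ and $\tfrac{1}{d-1}\log\tfrac{d}{d-1} \leq \log d$ (valid since $\tfrac{d}{d-1} \leq d^{d-1}$ for $d \geq 2$).

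The main obstacle — and the source of the extra $-\log d$ compared with the exact identity of Lemma~\ref{Lemma non Archimedean equality} — is exactly that, at the Archimedean place, the triangle inequality only produces strict domination of the leading term once $|w|$ exceeds the escape radius $r_{\infty}(\psi_{\mathbf{a}})$ by a fixed factor; taking the threshold to be $d \cdot r_{\infty}(\psi_{\mathbf{a}})$ is what makes both the domination and the resulting escape estimate quantitative, at the cost of that logarithmic loss. One must also be careful to check that the orbit genuinely tends to infinity rather than merely staying bounded away from the origin, since this is what allows replacing $\log^+$ by $\log$ in the limit defining $\hat{\lambda}_{\psi_{\mathbf{a}},\infty}$.
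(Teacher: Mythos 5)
Your proof is correct and follows essentially the same route as the paper's: split on whether $|z|$ exceeds $d\,r_\infty(\psi_{\mathbf{a}})$, use the triangle inequality together with $\left|\frac{a_d}{a_0}z^d\right| > d^{d-i}\left|\frac{a_i}{a_0}z^i\right|$ to show the leading term dominates (the paper settles for the constant $1/2$ where you keep the slightly sharper $1-1/d$), deduce that the orbit monotonically escapes, iterate to get the explicit growth rate, and pass to the limit in the definition of $\hat{\lambda}_{\psi_{\mathbf{a}},\infty}$. Your packaging of the induction as a linear recursion $u_{n+1}\geq d\,u_n+c$ solved about its fixed point is a tidy reformulation but produces the same estimate, and the closing inequalities $\frac{1}{d-1}\log|a_d/a_0|\geq-\log r_\infty(\psi_{\mathbf{a}})$ and $\frac{1}{d-1}\log\frac{d}{d-1}\leq\log d$ match the paper's final step.
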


\begin{proof}
We first note that the statement is clear when $|z| \leq d \cdot r_{\infty}(\psi_{\mathbf{a}})$. We now assume that $|z| > d \cdot r_{\infty}(\psi_{\mathbf{a}})$ and we follow closely the lines of the proof of Lemma~\ref{Lemma non Archimedean equality} to show that
\begin{equation}
\label{Goal Archimedean}
\hat{\lambda}_{\psi_{\mathbf{a}},\infty}(z) - \log|z| \geq \frac1{d-1} \log \left| \frac{a_d}{a_0} \right| - \frac1{d-1} \log 2.
\end{equation}
The assumption $|z|> d \cdot r_{\infty}(\psi_{\mathbf{a}})$ implies that for any $i \in \{0, \dots, d-2 \}$ we have
\begin{equation*}
\left|\frac{a_d}{a_0} z^d\right| > d^{d-i} \left|\frac{a_i}{a_0} z^i \right|.
\end{equation*}
As a result, recalling the definition \eqref{Definition psi} of the polynomial $\psi_{\mathbf{a}}$ we see that the triangle inequality yields in particular
\begin{equation*}
|\psi_{\mathbf{a}}(z)| \geq \left|\frac{a_d}{2a_0} z^d \right|.
\end{equation*}
Hence, using the assumption $|z| > d \cdot r_{\infty}(\psi_{\mathbf{a}})$ we see that $|\psi_{\mathbf{a}}(z)|_p \geq |z|_p$, and an elementary induction thus shows that for any $n \geq 1$, we have
\begin{equation*}
|\psi_{\mathbf{a}}^n(z)| \geq \left| \frac{a_d}{2a_0} \right|^{(d^n-1)/(d-1)} |z|^{d^n}.
\end{equation*}
Moreover, the assumption $|z|> d \cdot r_{\infty}(\psi_{\mathbf{a}})$ also gives
\begin{equation*}
\left| \frac{a_d}{d a_0} \right|^{1/(d-1)} |z| > 1.
\end{equation*}
Therefore, we deduce that if $n$ is large enough then $|\psi_{\mathbf{a}}^n(z)| >1$. Recalling the definition \eqref{Definition local} of the local canonical height $\hat{\lambda}_{\psi_{\mathbf{a}},\infty}$, we see that the lower bound \eqref{Goal Archimedean} follows. We finally remark that the inequality
\begin{equation*}
\frac1{d-1} \log \left| \frac{a_d}{a_0} \right| \geq - \log r_{\infty}(\psi_{\mathbf{a}})
\end{equation*}
allows us to complete the proof.
\end{proof}

\subsection{Inequalities involving the global canonical height}

For $k \geq 2$ and $n \geq 1$, we define the $k$-free part $\mathrm{s}_k(n)$ of the integer $n$ by
\begin{equation*}
\mathrm{s}_k(n) = \prod_{\substack{p \mid n \\ p^k \nmid n}} \frac1{|n|_p}.
\end{equation*}
Throughout the article we say that $x \in \mathbb{Z}$ and $y \geq 1$ are respectively the numerator and the denominator of a given $z \in \mathbb{Q}$ if and only if we have $z = x/y$ and $\gcd(x,y)=1$.

Our next result will be the most important tool in the proof of Theorems~\ref{Theorem UBC} and \ref{Theorem Lang} in the case $d=2$.

\begin{lemma}
\label{Lemma d=2}
Let $\psi_{\mathbf{a}} \in \mathscr{P}_2$. For any $z \in \mathbb{Q}$, we have
\begin{equation*}
\hat{h}_{\psi_{\mathbf{a}}}(z) \geq \frac1{2} \log \mathrm{s}_2(a_0).
\end{equation*}
\end{lemma}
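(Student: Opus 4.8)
The plan is to discard every non-negative local contribution to $\hat{h}_{\psi_{\mathbf{a}}}(z)$ except those at the primes $p$ with $v_p(a_0)=1$, and to show that each such prime contributes at least $\tfrac{1}{2}\log p$. Since $\mathrm{s}_2(a_0)=\prod_{p\,:\,v_p(a_0)=1}p$, summing these contributions will yield exactly the claimed bound.

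First I would record that for $d=2$ the definition \eqref{Definition psi} reads $\psi_{\mathbf{a}}(z)=(a_2/a_0)z^2+1$, with $\gcd(a_2,a_0)=1$; in particular every prime $p\mid a_0$ satisfies $p\nmid a_2$, so $|a_0/a_2|_p=p^{-v_p(a_0)}<1$ and the definition \eqref{Definition r} gives $r_p(\psi_{\mathbf{a}})=|a_0/a_2|_p^{1/2}=p^{-v_p(a_0)/2}$. Combining \eqref{Equality product formula} with the non-negativity of the local canonical heights, I get
\begin{equation*}
\hat{h}_{\psi_{\mathbf{a}}}(z)\geq\sum_{p\,:\,v_p(a_0)=1}\hat{\lambda}_{\psi_{\mathbf{a}},p}(z),
\end{equation*}
so the lemma reduces to the pointwise inequality $\hat{\lambda}_{\psi_{\mathbf{a}},p}(z)\geq\tfrac{1}{2}\log p$, valid for every $z\in\mathbb{Q}$ and every prime $p$ with $v_p(a_0)=1$.

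To prove this inequality I would fix such a prime $p$, so that $r_p(\psi_{\mathbf{a}})=p^{-1/2}$, and split into two cases according to whether $|z|_p>r_p(\psi_{\mathbf{a}})$ (equivalently $v_p(z)\leq 0$) or $|z|_p<r_p(\psi_{\mathbf{a}})$ (equivalently $v_p(z)\geq 1$, the case $z=0$ included); these are the only possibilities since $v_p(z)\in\mathbb{Z}\cup\{\infty\}$. In the first case $z\in\mathbb{Q}^{\times}$ and Lemma~\ref{Lemma non Archimedean equality} yields
\begin{equation*}
\hat{\lambda}_{\psi_{\mathbf{a}},p}(z)=\log|z|_p+\log\left|\frac{a_2}{a_0}\right|_p=-v_p(z)\log p+\log p\geq\log p,
\end{equation*}
using $v_p(a_0)=1$ and $p\nmid a_2$. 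In the second case I would use that the constant term of $\psi_{\mathbf{a}}$ equals $1$ while $v_p\bigl((a_2/a_0)z^2\bigr)=2v_p(z)-1\geq 1$, so that $v_p(\psi_{\mathbf{a}}(z))=0$; then $|\psi_{\mathbf{a}}(z)|_p=1>r_p(\psi_{\mathbf{a}})$, the first case applies to $\psi_{\mathbf{a}}(z)$ and gives $\hat{\lambda}_{\psi_{\mathbf{a}},p}(\psi_{\mathbf{a}}(z))=\log p$, whence \eqref{Equality local height} forces $\hat{\lambda}_{\psi_{\mathbf{a}},p}(z)=\tfrac{1}{2}\hat{\lambda}_{\psi_{\mathbf{a}},p}(\psi_{\mathbf{a}}(z))=\tfrac{1}{2}\log p$. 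In both cases the required bound holds, and summing over the relevant primes completes the argument.

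This proof is short and I do not anticipate a genuine obstacle; the one subtlety worth flagging is that when $v_p(z)\geq 1$ the quantity $\hat{\lambda}_{\psi_{\mathbf{a}},p}(z)$ can itself be strictly smaller than $\log p$, so the trivial bound $\hat{\lambda}_{\psi_{\mathbf{a}},p}(z)\geq 0$ is not enough and one really has to apply $\psi_{\mathbf{a}}$ once. What makes this step work cleanly is the explicit shape of the degree-two member of $\mathscr{P}_d$, and in particular the fact that its constant term is a $p$-adic unit, which forces $z$ to leave the region $\{v_p(\cdot)>0\}$ in a single iteration.
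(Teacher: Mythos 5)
Your proof is correct and takes essentially the same approach as the paper: both decompose via the product formula, both apply Lemma~\ref{Lemma non Archimedean equality} at primes $p$ with $v_p(a_0)=1$ (where $r_p(\psi_{\mathbf{a}})=p^{-1/2}$), and both handle the case $v_p(z)\geq 1$ by iterating once and invoking \eqref{Equality local height}. The only difference is organizational — you isolate the pointwise local inequality $\hat{\lambda}_{\psi_{\mathbf{a}},p}(z)\geq\tfrac{1}{2}\log p$ before summing, whereas the paper splits $\log\mathrm{s}_2(a_0)$ directly into the two subsums — but this is cosmetic, not a different route.
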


\begin{proof}
Let $x \in \mathbb{Z}$ denote the numerator of $z$. We start by writing
\begin{equation}
\label{Equality sa0 2}
\log \mathrm{s}_2(a_0) = \sum_{\substack{p \mid \mathrm{s}_2(a_0) \\ p \nmid x}} \log p + \sum_{\substack{p \mid \mathrm{s}_2(a_0) \\ p \mid x}} \log p.
\end{equation}
We note that given a prime number $p$ dividing $\mathrm{s}_2(a_0)$, we have $p \nmid a_2$ since $(a_2,a_0) \in \mathbb{Z}_{\mathrm{prim}}^2$ so it follows that
\begin{equation*}
r_p(\psi_{\mathbf{a}}) = \frac1{p^{1/2}}.
\end{equation*}
Therefore, for each prime number $p$ dividing $\mathrm{s}_2(a_0)$ and such that $p \nmid x$ we are in position to apply Lemma~\ref{Lemma non Archimedean equality}. We deduce in particular that
\begin{equation}
\label{Upper bound nmid x}
\sum_{\substack{p \mid \mathrm{s}_2(a_0) \\ p \nmid x}} \log p \leq \sum_{\substack{p \mid \mathrm{s}_2(a_0) \\ p \nmid x}} \hat{\lambda}_{\psi_{\mathbf{a}},p}(z).
\end{equation}

Next, given any prime number $p$ dividing $\mathrm{s}_2(a_0)$ and $x$, it follows from the definition \eqref{Definition psi} of the polynomial $\psi_{\mathbf{a}}$ that $|\psi_{\mathbf{a}}(z)|_p = 1$. Applying Lemma~\ref{Lemma non Archimedean equality} for each prime number $p$ dividing $\mathrm{s}_2(a_0)$ and $x$ we thus obtain
\begin{equation*}
\sum_{\substack{p \mid \mathrm{s}_2(a_0) \\ p \mid x}} \log p = \sum_{\substack{p \mid \mathrm{s}_2(a_0) \\ p \mid x}} \hat{\lambda}_{\psi_{\mathbf{a}},p}(\psi_{\mathbf{a}}(z)).
\end{equation*}
Hence, the equality \eqref{Equality local height} yields
\begin{equation}
\label{Equality mid x}
\sum_{\substack{p \mid \mathrm{s}_2(a_0) \\ p \mid x}} \log p = 2 \sum_{\substack{p \mid \mathrm{s}_2(a_0) \\ p \mid x}} \hat{\lambda}_{\psi_{\mathbf{a}},p}(z).
\end{equation}

Putting together the equalities \eqref{Equality sa0 2} and \eqref{Equality mid x} and the upper bound \eqref{Upper bound nmid x} we get
\begin{equation*}
\log \mathrm{s}_2(a_0) \leq 2 \sum_{p \mid \mathrm{s}_2(a_0)} \hat{\lambda}_{\psi_{\mathbf{a}},p}(z).
\end{equation*}
We immediately complete the proof by appealing to the equality \eqref{Equality product formula}.
\end{proof}

We now introduce an arithmetic quantity which will play a pivotal role in our work. Given $\ell, m \in \mathbb{Z}$, we let $\Delta_{\ell}(m)$ denote the largest divisor of $m$ whose radical divides $\ell$, that is
\begin{equation}
\label{Definition Delta}
\Delta_{\ell}(m) = \prod_{p \mid \ell} \frac1{|m|_p}.
\end{equation}
The remainder of this section deals with the case $d \geq 3$. Our next result asserts that for a generic polynomial $\psi_{\mathbf{a}} \in \mathscr{P}_d$, a typical rational number which has small canonical height with respect to $\psi_{\mathbf{a}}$ also has small Weil height.

\begin{lemma}
\label{Lemma Weil}
Let $d \geq 3$ and $\psi_{\mathbf{a}} \in \mathscr{P}_d$. Let also $z \in \mathbb{Q}^{\times}$ and let $x \in \mathbb{Z}$ and $y \geq 1$ be respectively the numerator and the denominator of $z$. We have
\begin{equation*}
\hat{h}_{\psi_{\mathbf{a}}}(z) - \log y \geq - \frac1{2} \log \Delta_y(a_d),
\end{equation*}
and
\begin{equation*}
\hat{h}_{\psi_{\mathbf{a}}}(z) - \log |x| \geq - \frac1{2} \log \Delta_y(a_d) - \log r_{\infty}(\psi_{\mathbf{a}}) - \log d.
\end{equation*}
\end{lemma}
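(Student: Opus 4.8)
The plan is to decompose the global canonical height according to \eqref{Equality product formula} and to bound each local term from below using the inequalities established in Lemmas~\ref{Lemma non Archimedean equality}, \ref{Lemma non Archimedean} and \ref{Lemma Archimedean}. Write $z = x/y$ with $\gcd(x,y)=1$. At a prime $p$ we have $\log|z|_p = v_p(y)\log p - v_p(x)\log p$, so $\sum_p \hat\lambda_{\psi_{\mathbf a},p}(z) - \log y = \sum_p\bigl(\hat\lambda_{\psi_{\mathbf a},p}(z) - \log|z|_p\bigr) - \log|x|^{-1}$, and since $|x|\ge 1$ the last term is $\ge 0$; thus it suffices to control $\sum_p\bigl(\hat\lambda_{\psi_{\mathbf a},p}(z) - \log|z|_p\bigr)$ from below and then add $\hat\lambda_{\psi_{\mathbf a},\infty}(z)\ge 0$ for the first inequality. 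For the second inequality we instead pair the Archimedean term with $\log|x|$: writing $\log|x| = \log|z| + \log y$ we get $\hat h_{\psi_{\mathbf a}}(z) - \log|x| = \bigl(\hat\lambda_{\psi_{\mathbf a},\infty}(z) - \log|z|\bigr) + \sum_p\bigl(\hat\lambda_{\psi_{\mathbf a},p}(z) - \log|z|_p\bigr) - \log y + \log y$; after cancelling, this equals $\bigl(\hat\lambda_{\psi_{\mathbf a},\infty}(z) - \log|z|\bigr) + \sum_p\bigl(\hat\lambda_{\psi_{\mathbf a},p}(z) - \log|z|_p\bigr)$, and Lemma~\ref{Lemma Archimedean} handles the Archimedean piece, contributing the $-\log r_\infty(\psi_{\mathbf a}) - \log d$ term.

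The core of the argument is therefore the finite-prime estimate $\sum_p\bigl(\hat\lambda_{\psi_{\mathbf a},p}(z) - \log|z|_p\bigr) \ge -\tfrac12\log\Delta_y(a_d)$. For a prime $p$ with $p\nmid a_d$, the primitivity of $\mathbf a$ forces $r_p(\psi_{\mathbf a})\le 1$ (indeed $|a_0/a_d|_p\le 1$ and $|a_i/a_d|_p\le 1$ for all $i$), so $\log|z|_p \ge 0$ whenever $|z|_p>r_p(\psi_{\mathbf a})$; more importantly, at such primes one should argue that $\hat\lambda_{\psi_{\mathbf a},p}(z) - \log|z|_p \ge 0$ outright — either directly from Lemma~\ref{Lemma non Archimedean equality} when $|z|_p > r_p(\psi_{\mathbf a})$ (the right-hand side $\tfrac1{d-1}\log|a_d/a_0|_p \ge 0$ since $p\nmid a_d$), or from $\hat\lambda_{\psi_{\mathbf a},p}(z)\ge 0 \ge \log|z|_p$ when $|z|_p \le r_p(\psi_{\mathbf a}) \le 1$. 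Hence only primes $p\mid a_d$ can contribute negatively, and for those Lemma~\ref{Lemma non Archimedean} gives $\hat\lambda_{\psi_{\mathbf a},p}(z) - \log|z|_p \ge -\log r_p(\psi_{\mathbf a})$. So $\sum_p\bigl(\hat\lambda_{\psi_{\mathbf a},p}(z) - \log|z|_p\bigr) \ge -\sum_{p\mid a_d}\log r_p(\psi_{\mathbf a})$, and it remains to check $\sum_{p\mid a_d}\log r_p(\psi_{\mathbf a}) \le \tfrac12\log\Delta_y(a_d)$.

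To close this last step, fix $p\mid a_d$. From the definition \eqref{Definition r}, $\log r_p(\psi_{\mathbf a}) = \max\{\tfrac1{d-1}v_p(a_d/a_0),\ \max_i \tfrac1{d-i}v_p(a_d/a_i)\}\log p$; since $p\nmid a_0$ and potentially $p\nmid a_i$ the relevant valuations are at most $v_p(a_d)$ divided by $d-1$ (or by some $d-i\ge 2$), so $\log r_p(\psi_{\mathbf a}) \le \tfrac1{d-1}v_p(a_d)\log p$, and as $d\ge 3$ this is $\le \tfrac12 v_p(a_d)\log p$. But the negative contribution from $p$ in the previous paragraph is only genuinely present when $|z|_p > r_p(\psi_{\mathbf a})$, which in particular requires $|z|_p>1$, i.e. $v_p(y)\ge 1$, i.e. $p\mid y$; for primes $p\mid a_d$ with $p\nmid y$ one again has $\hat\lambda_{\psi_{\mathbf a},p}(z)-\log|z|_p\ge 0$ by the argument of the previous paragraph (now $|z|_p\le 1$). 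Therefore the genuine loss is confined to primes $p$ dividing both $a_d$ and $y$, and summing $\tfrac12 v_p(a_d)\log p$ over such primes yields exactly $\tfrac12\log\prod_{p\mid y}p^{v_p(a_d)} = \tfrac12\log\Delta_y(a_d)$, as required. The main obstacle I anticipate is the bookkeeping needed to confirm that the max in \eqref{Definition r} really is $\le \tfrac1{d-1}v_p(a_d)\log p$ for all $p\mid a_d$ — one must use $p\nmid a_0$ (from primitivity) to bound $v_p(a_d/a_0)=v_p(a_d)$ and check that the terms $\tfrac1{d-i}v_p(a_d/a_i)$, where $a_i$ need not be coprime to $a_d$, are also controlled; the worst case is $i=d-2$ giving exponent $\tfrac12 v_p(a_d/a_{d-2})\le \tfrac12 v_p(a_d)$, which is consistent with the claimed bound precisely because $d\ge 3$ makes $\tfrac1{d-1}\le\tfrac12$.
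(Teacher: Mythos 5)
Your overall route is the same as the paper's (decompose via the product formula, apply Lemma~\ref{Lemma non Archimedean} together with the bound $r_p(\psi_{\mathbf{a}}) \leq |a_d|_p^{-1/2}$ at the finite primes, then Lemma~\ref{Lemma Archimedean} at infinity), but your bookkeeping contains a sign error that, as written, breaks the argument. Since $z = x/y$ with $\gcd(x,y)=1$, we have $\sum_p \log|z|_p = \log y - \log|x|$, not $\log y + \log|x|$. Consequently
\begin{equation*}
\sum_p \hat{\lambda}_{\psi_{\mathbf{a}},p}(z) - \log y = \sum_p\bigl(\hat{\lambda}_{\psi_{\mathbf{a}},p}(z) - \log|z|_p\bigr) - \log|x|,
\end{equation*}
and the final term is $-\log|x| \leq 0$, not $\geq 0$ as you claim. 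Your core estimate $\sum_p(\hat{\lambda}_{\psi_{\mathbf{a}},p}(z) - \log|z|_p) \geq -\tfrac12\log\Delta_y(a_d)$ therefore does not suffice; you would need the strengthening $\sum_p(\hat{\lambda}_{\psi_{\mathbf{a}},p}(z) - \log|z|_p) \geq \log|x| - \tfrac12\log\Delta_y(a_d)$, which is in fact available from your own prime-by-prime analysis (for $p\mid x$ one has $\log|z|_p = -v_p(x)\log p$, so $\hat{\lambda}_{\psi_{\mathbf{a}},p}(z) - \log|z|_p \geq v_p(x)\log p$, summing to $\log|x|$), but you never record it. The same omitted $-\log|x|$ persists in your rearrangement for the second inequality. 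The paper avoids this entirely by using $\hat{\lambda}_{\psi_{\mathbf{a}},p}(z) \geq 0$ to drop all primes $p\nmid y$ at the outset and applying Lemma~\ref{Lemma non Archimedean} only to $p\mid y$, where $\sum_{p\mid y}\log|z|_p = \log y$ exactly; you should restructure accordingly or add the missing $\log|x|$ gain explicitly.

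Two smaller points. First, primitivity of $\mathbf{a}$ does \emph{not} imply $p\nmid a_0$ when $p\mid a_d$; it only says no prime divides all of $a_d, a_{d-2}, \dots, a_0$. This does not matter, because the bound $r_p(\psi_{\mathbf{a}}) \leq |a_d|_p^{-1/2}$ follows simply from $a_i \in \mathbb{Z}$ (hence $|a_i|_p \leq 1$) together with $d - i \geq 2$ for $i \leq d-2$ and $d-1 \geq 2$. Second, your intermediate claim $\log r_p(\psi_{\mathbf{a}}) \leq \tfrac1{d-1}v_p(a_d)\log p$ is too strong and false in general (the term with $i = d-2$ can attain $\tfrac12 v_p(a_d)\log p$); the bound you actually need and eventually state, $\log r_p(\psi_{\mathbf{a}}) \leq \tfrac12 v_p(a_d)\log p$, is the correct one, and it is what the paper's inequality \eqref{Upper bound rp} records.
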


\begin{proof}
We start by noting that since $a_i \in \mathbb{Z}$ for any $i \in \{0, \dots, d-2 \}$, the assumption $d \geq 3$ implies that for any prime number $p$, we have
\begin{equation}
\label{Upper bound rp}
r_p(\psi_{\mathbf{a}}) \leq \frac1{|a_d|_p^{1/2}}.
\end{equation}
Therefore, applying Lemma~\ref{Lemma non Archimedean} for each prime number $p$ dividing $y$ we obtain
\begin{equation*}
\sum_{p \mid y} \hat{\lambda}_{\psi_{\mathbf{a}},p}(z) - \log y \geq - \frac1{2} \sum_{p \mid y} \log \frac1{|a_d|_p}.
\end{equation*}
Appealing to the equality \eqref{Equality product formula} we thus deduce that
\begin{equation}
\label{Inequality y}
\hat{h}_{\psi_{\mathbf{a}}}(z) - \hat{\lambda}_{\psi_{\mathbf{a}},\infty}(z) - \log y \geq - \frac1{2} \log \Delta_{y}(a_d),
\end{equation}
which implies the first inequality claimed.

Next, we note that Lemma~\ref{Lemma Archimedean} states that
\begin{equation}
\label{Inequality z}
\hat{\lambda}_{\psi_{\mathbf{a}},\infty}(z) - \left( \log|x| - \log y \right) \geq - \log r_{\infty}(\psi_{\mathbf{a}}) - \log d.
\end{equation}
Combining the lower bounds \eqref{Inequality y} and \eqref{Inequality z} we obtain the second inequality claimed, which completes the proof.
\end{proof}

We now record an immediate consequence of Lemma~\ref{Lemma Weil} which will be used in the proof of the lower bound in Proposition~\ref{Proposition lambda}. Given $\psi_{\mathbf{a}} \in \mathscr{P}_d$, it provides us with a lower bound for the difference between the canonical height with respect to $\psi_{\mathbf{a}}$ and the Weil height.

\begin{lemma}
\label{Lemma lower bound}
Let $d \geq 3$ and $\psi_{\mathbf{a}} \in \mathscr{P}_d$. For any $z \in \mathbb{Q}$, we have
\begin{equation*}
\hat{h}_{\psi_{\mathbf{a}}}(z) - h(z) \geq - \frac1{2} \log \mathscr{H}(\psi_{\mathbf{a}}) - \log d.
\end{equation*}
\end{lemma}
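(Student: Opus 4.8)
The plan is to deduce this lemma directly from Lemma~\ref{Lemma Weil}, treating separately the cases $z = 0$ and $z \in \mathbb{Q}^{\times}$. If $z = 0$ then $\hat{h}_{\psi_{\mathbf{a}}}(0) \geq 0 = h(0)$ since $0$ is a fixed point of $\psi_{\mathbf{a}}$ (note $\psi_{\mathbf{a}}(0) = 1$, so $0$ is not preperiodic in general, but in any case the canonical height is non-negative and $h(0) = 0$), so the inequality holds trivially. For $z \in \mathbb{Q}^{\times}$, write $z = x/y$ with $x \in \mathbb{Z}$, $y \geq 1$ and $\gcd(x,y) = 1$, so that $h(z) = \log \max\{|x|, y\}$. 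I would split into the two subcases according to whether the maximum is achieved by $y$ or by $|x|$.

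First, if $h(z) = \log y$, then the first inequality of Lemma~\ref{Lemma Weil} gives directly $\hat{h}_{\psi_{\mathbf{a}}}(z) - h(z) \geq -\tfrac12 \log \Delta_y(a_d)$, and since $\Delta_y(a_d)$ is a divisor of $a_d$ we have $\Delta_y(a_d) \leq |a_d| \leq \mathscr{H}(\psi_{\mathbf{a}})$, whence $\hat{h}_{\psi_{\mathbf{a}}}(z) - h(z) \geq -\tfrac12 \log \mathscr{H}(\psi_{\mathbf{a}})$, which is even stronger than claimed. Second, if $h(z) = \log |x|$, then the second inequality of Lemma~\ref{Lemma Weil} gives $\hat{h}_{\psi_{\mathbf{a}}}(z) - h(z) \geq -\tfrac12 \log \Delta_y(a_d) - \log r_{\infty}(\psi_{\mathbf{a}}) - \log d$. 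Again $\Delta_y(a_d) \leq |a_d| \leq \mathscr{H}(\psi_{\mathbf{a}})$, so it remains to check that $\log r_{\infty}(\psi_{\mathbf{a}})$ contributes nothing problematic. From the definition~\eqref{Definition r}, $r_{\infty}(\psi_{\mathbf{a}})$ is the maximum of $|a_0/a_d|^{1/(d-1)}$ and the quantities $|a_i/a_d|^{1/(d-i)}$ for $i \in \{0, \dots, d-2\}$; since each $|a_i| \leq \mathscr{H}(\psi_{\mathbf{a}})$ and $|a_d| \geq 1$, each of these is at most $\mathscr{H}(\psi_{\mathbf{a}})$, so $\log r_{\infty}(\psi_{\mathbf{a}}) \leq \log \mathscr{H}(\psi_{\mathbf{a}})$. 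This would give a bound of the shape $-\tfrac32 \log \mathscr{H}(\psi_{\mathbf{a}}) - \log d$, which is weaker than what is claimed.

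The main obstacle is therefore reconciling the second subcase with the sharper constant $\tfrac12$ in the statement. The resolution should be that when $h(z) = \log|x| \geq \log y$, one can use a better bound on $r_{\infty}(\psi_{\mathbf{a}})$: more precisely, the exponents in~\eqref{Definition r} are $1/(d-1)$ and $1/(d-i) \leq 1/2$, so in fact $r_{\infty}(\psi_{\mathbf{a}}) \leq \mathscr{H}(\psi_{\mathbf{a}})^{1/(d-1)} \leq \mathscr{H}(\psi_{\mathbf{a}})$ for $d \geq 2$, and more usefully $\log r_{\infty}(\psi_{\mathbf{a}}) \leq \tfrac{1}{d-1}\log\mathscr{H}(\psi_{\mathbf{a}})$. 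Combined with the observation that in the subcase $|x| \geq y$ one has access to both inequalities of Lemma~\ref{Lemma Weil} — and that $\Delta_y(a_d)$ can be bounded using $|a_d|_p \geq 1/\mathscr{H}(\psi_{\mathbf{a}})$ only over primes dividing $y$, while $y \leq |x|$ — one should be able to balance the terms: the point is that the Archimedean loss $\log r_\infty + \log d$ should be absorbed because the split already "spent" the comparison between $\log|x|$ and $\log y$. I expect the clean argument is: apply the first inequality of Lemma~\ref{Lemma Weil} together with $\log|x| \leq \log y + (\text{something controlled})$ is false in general, so instead one simply notes $h(z) \geq \max\{\log|x|, \log y\}$ is not the issue — rather, one uses inequality~\eqref{Inequality y} directly, namely $\hat{h}_{\psi_{\mathbf{a}}}(z) - \hat{\lambda}_{\psi_{\mathbf{a}},\infty}(z) - \log y \geq -\tfrac12 \log \Delta_y(a_d)$, and then bounds $\hat{\lambda}_{\psi_{\mathbf{a}},\infty}(z) \geq \log^+ |z| \geq \log|x| - \log y$ directly from the definition~\eqref{Definition local} combined with Lemma~\ref{Lemma Archimedean} — and the correct accounting of the $\log d$ and $\log r_\infty$ terms, using $\log r_\infty(\psi_{\mathbf{a}}) \leq \tfrac{1}{d-1}\log\mathscr{H}(\psi_{\mathbf{a}})$ and $d \geq 3$, yields exactly $-\tfrac12\log\mathscr{H}(\psi_{\mathbf{a}}) - \log d$. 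I would track the constants carefully here, as this bookkeeping — not any deep idea — is the crux of the proof.
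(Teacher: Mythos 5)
Your overall structure matches the paper's: handle $z=0$ trivially, then for $z\in\mathbb{Q}^\times$ apply Lemma~\ref{Lemma Weil}, and the first subcase ($h(z)=\log y$, using the first inequality) is indeed immediate and even gives a bound without the $-\log d$ term. The gap is entirely in the second subcase, and your attempts there do not close it.

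The key point you are missing is a \emph{cancellation} between the two error terms $\Delta_y(a_d)$ and $r_\infty(\psi_{\mathbf{a}})$, both of which involve $|a_d|$ in opposite directions. Since $d\geq 3$, every exponent occurring in \eqref{Definition r} is at most $1/2$ and every coefficient satisfies $|a_i|\leq\mathscr{H}(\psi_{\mathbf{a}})$, so
\begin{equation*}
r_\infty(\psi_{\mathbf{a}}) \leq \left(\frac{\mathscr{H}(\psi_{\mathbf{a}})}{|a_d|}\right)^{1/2}.
\end{equation*}
Combining this with the trivial $\Delta_y(a_d)\leq |a_d|$ gives
\begin{equation*}
\tfrac12\log\Delta_y(a_d) + \log r_\infty(\psi_{\mathbf{a}}) \leq \tfrac12\log|a_d| + \tfrac12\log\mathscr{H}(\psi_{\mathbf{a}}) - \tfrac12\log|a_d| = \tfrac12\log\mathscr{H}(\psi_{\mathbf{a}}),
\end{equation*}
and the second inequality of Lemma~\ref{Lemma Weil} then yields exactly the claimed bound. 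The $|a_d|$ dependence must be kept, not discarded: bounding $\Delta_y(a_d)\leq\mathscr{H}$ and $r_\infty\leq\mathscr{H}^{1/2}$ separately (which is what your first pass does, up to the exponent) loses a full $\frac12\log|a_d|$ and gives only $-\log\mathscr{H}-\log d$. Two of your proposed fixes are also incorrect: the bound $r_\infty(\psi_{\mathbf{a}})\leq\mathscr{H}(\psi_{\mathbf{a}})^{1/(d-1)}$ fails for $d\geq 4$ (the dominant term in \eqref{Definition r} is the $i=d-2$ one with exponent $1/2$, not $1/(d-1)$, so in general one only has $r_\infty\leq\mathscr{H}^{1/2}$), and the inequality $\hat{\lambda}_{\psi_{\mathbf{a}},\infty}(z)\geq\log^+|z|$ is simply false — the local canonical height vanishes on the filled Julia set, where $|z|$ can be arbitrary. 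The "bookkeeping" you defer to is not routine without spotting the $|a_d|$ cancellation; once it is spotted, no case split is needed at all.
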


\begin{proof}
We start by noting that the statement clearly holds in the case $z=0$ so we now assume that $z \in \mathbb{Q}^{\times}$. Let $y \geq 1$ denote the denominator of $z$. Lemma~\ref{Lemma Weil} shows in particular that
\begin{equation*}
\hat{h}_{\psi_{\mathbf{a}}}(z) - h(z) \geq - \frac1{2} \log \Delta_y(a_d) - \log r_{\infty}(\psi_{\mathbf{a}}) - \log d.
\end{equation*}
But it is clear that $\Delta_y(a_d) \leq |a_d|$, and moreover the assumption $d \geq 3$ implies that
\begin{equation*}
r_{\infty}(\psi_{\mathbf{a}}) \leq \frac{\mathscr{H}(\psi_{\mathbf{a}})^{1/2}}{|a_d|^{1/2}}.
\end{equation*}
We thus have
\begin{equation*}
\frac1{2} \log \Delta_y(a_d) + \log r_{\infty}(\psi_{\mathbf{a}}) \leq \frac1{2} \log \mathscr{H}(\psi_{\mathbf{a}}),
\end{equation*}
which completes the proof.
\end{proof}

Given $\psi_{\mathbf{a}} \in \mathscr{P}_d$ and $x,y \in \mathbb{Z}$, it is convenient to set
\begin{equation}
\label{Definition sigma}
\sigma_{\mathbf{a}}(x,y) = a_d x^d + a_{d-2} x^{d-2} y^2 + \cdots + a_1 x y^{d-1}.
\end{equation}
The following result deals with the case $d \geq 3$ and will be the key tool in the proof of Theorems~\ref{Theorem UBC} and \ref{Theorem Lang}, and Proposition~\ref{Proposition lambda statistical}. It asserts that given a generic polynomial $\psi_{\mathbf{a}} \in \mathscr{P}_d$ and $k \in \{2, 3\}$, if the canonical height with respect to $\psi_{\mathbf{a}}$ of $z \in \mathbb{Q}^{\times}$ is small then a large factor of the $k$-free part of $a_0$ has to divide $\sigma_{\mathbf{a}}(x,y)$. We will make use of the case $k=2$ in the proof of Theorem~\ref{Theorem Lang} and Proposition~\ref{Proposition lambda statistical}. We note that the exponent stated in Theorem~\ref{Theorem UBC} crucially relies upon the case $k=3$ but the case $k=2$ is actually sufficient to establish a version of Theorem~\ref{Theorem UBC} with a weaker exponent.

\begin{lemma}
\label{Lemma a0}
Let $d \geq 3$ and $k \in \{2, 3\}$. Let also $\psi_{\mathbf{a}} \in \mathscr{P}_d$. Finally, let $z \in \mathbb{Q}^{\times}$ and let $x \in \mathbb{Z}$ and $y \geq 1$ be respectively the numerator and the denominator of $z$. We have
\begin{align*}
d(d-1) \hat{h}_{\psi_{\mathbf{a}}}(z) \geq & \ \frac{d+k-2}{k-1} \log \frac{\mathrm{s}_k(a_0)}{\gcd(\mathrm{s}_k(a_0), \sigma_{\mathbf{a}}(x,y))} - \log \gcd (\mathrm{s}_k(a_0), a_d) \\
& - \frac{d-1}{2} \log \Delta_{\mathrm{s}_k(a_0)}(a_d).
\end{align*}
\end{lemma}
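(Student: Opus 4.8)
The plan is to decompose the global canonical height via the product formula \eqref{Equality product formula} and extract a lower bound by carefully bookkeeping the contributions of primes dividing $\mathrm{s}_k(a_0)$, distinguishing according to how such a prime interacts with the numerator and denominator of $z$. Fix a prime $p \mid \mathrm{s}_k(a_0)$, so $v_p(a_0) \in \{1,\dots,k-1\}$; since $\mathbf{a} \in \mathbb{Z}_{\mathrm{prim}}^d$ and $a_0$ is divisible by $p$, we can control $r_p(\psi_{\mathbf{a}})$ from \eqref{Definition r}, noting $|a_i/a_d|_p \le p^{v_p(a_0)}$ roughly. The idea is that for $p \nmid \sigma_{\mathbf{a}}(x,y)$ (and $p \nmid a_d$), the point $z$ has large $p$-adic absolute value forced by the relation $a_0\psi_{\mathbf{a}}(z) y^d = \sigma_{\mathbf{a}}(x,y) + a_0 y^d$, so that $|z|_p > r_p(\psi_{\mathbf{a}})$ and Lemma~\ref{Lemma non Archimedean equality} applies to give an exact formula for $\hat{\lambda}_{\psi_{\mathbf{a}},p}(z)$; summing these exact contributions over the relevant primes produces the main term $\frac{d+k-2}{k-1}\log(\mathrm{s}_k(a_0)/\gcd(\mathrm{s}_k(a_0),\sigma_{\mathbf{a}}(x,y)))$ after dividing by $d-1$ and tracking the exponent $1/(d-1)$ against $v_p(a_0) \le k-1$.

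The next step is to handle the primes $p \mid \mathrm{s}_k(a_0)$ that are excluded from the clean analysis above, namely those dividing $\gcd(\mathrm{s}_k(a_0),\sigma_{\mathbf{a}}(x,y))$ or dividing $a_d$. For such primes we cannot assert $|z|_p > r_p(\psi_{\mathbf{a}})$, but we still have $\hat{\lambda}_{\psi_{\mathbf{a}},p}(z) \ge 0$, so they contribute nonnegatively and can simply be dropped — except that we must be careful because dropping them is what creates the correction terms. More precisely, the bound we actually prove is a sum over \emph{all} $p \mid \mathrm{s}_k(a_0)$ of either the exact Lemma~\ref{Lemma non Archimedean equality} value or a cruder lower bound from Lemma~\ref{Lemma non Archimedean}; the term $-\log\gcd(\mathrm{s}_k(a_0),a_d)$ accounts for the loss at primes dividing both $\mathrm{s}_k(a_0)$ and $a_d$, and the term $-\frac{d-1}{2}\log\Delta_{\mathrm{s}_k(a_0)}(a_d)$ — which equals $\frac{d-1}{2}\sum_{p \mid \mathrm{s}_k(a_0)} \log(1/|a_d|_p)$ — arises precisely from applying the inequality \eqref{Upper bound rp}, $r_p(\psi_{\mathbf{a}}) \le |a_d|_p^{-1/2}$, in the Lemma~\ref{Lemma non Archimedean} estimate $\hat{\lambda}_{\psi_{\mathbf{a}},p}(z) - \log|z|_p \ge -\log r_p(\psi_{\mathbf{a}})$ for the primes where we do not have the exact equality. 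The arithmetic identity to exploit throughout is that $\sum_{p \mid n}\log p = \log(\mathrm{rad}(n))$ and that $v_p(a_0) \le k-1$ on $\mathrm{s}_k(a_0)$, which is what converts the exponent $1/(d-1)$ in the local formulas into the factor $(d+k-2)/(k-1)$; one writes $\log p = \frac{1}{v_p(a_0)}\log p^{v_p(a_0)} \ge \frac{1}{k-1}\log p^{v_p(a_0)}$ and assembles.

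The technical heart is the claim that $p \nmid \sigma_{\mathbf{a}}(x,y)$ together with $p \mid a_0$ forces $v_p(z) < 0$ with $|z|_p$ large enough to trigger Lemma~\ref{Lemma non Archimedean equality}; I would prove this by clearing denominators in \eqref{Definition psi}, namely writing $a_0 y^d \psi_{\mathbf{a}}(z) = a_d x^d + a_{d-2}x^{d-2}y^2 + \cdots + a_1 x y^{d-1} + a_0 y^d = \sigma_{\mathbf{a}}(x,y) + a_0 y^d$, and analyzing $p$-adic valuations: since $p \mid a_0$ but $p \nmid \sigma_{\mathbf{a}}(x,y)$, and $\gcd(x,y)=1$ so $p$ cannot divide both, a short case analysis on whether $p \mid x$ or $p \mid y$ pins down $v_p(x)$ and $v_p(y)$ and hence $|z|_p$ relative to $r_p(\psi_{\mathbf{a}}) \approx p^{v_p(a_0)/(d-1)}$ (at least when $p \nmid a_d$). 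I expect the main obstacle to be organizing this local casework cleanly — in particular distinguishing the subcase $p \mid x$ (where one should apply Lemma~\ref{Lemma non Archimedean equality} to $\psi_{\mathbf{a}}(z)$ rather than $z$, mirroring the trick in the proof of Lemma~\ref{Lemma d=2}) from the subcase $p \mid y$ — and then reconciling the resulting per-prime inequalities, which have slightly different shapes depending on the subcase, into a single uniform sum that telescopes against $\hat{h}_{\psi_{\mathbf{a}}}(z)$ after multiplying through by $d(d-1)$ and invoking \eqref{Equality product formula} together with the nonnegativity of $\hat{\lambda}_{\psi_{\mathbf{a}},\infty}$ and of the omitted local heights.
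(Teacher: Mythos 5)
Your plan shares the broad philosophy of the paper's proof (split $\log \mathrm{s}_k(a_0)$ across primes, feed the result into the product formula via Lemma~\ref{Lemma non Archimedean equality} and Lemma~\ref{Lemma non Archimedean}, and absorb the losses into the stated correction terms), but the technical heart as you describe it has a genuine gap. You assert that $p \mid a_0$, $p \nmid a_d$ and $p \nmid \sigma_{\mathbf{a}}(x,y)$ force $v_p(z) < 0$, i.e.\ force $|z|_p$ to exceed $r_p(\psi_{\mathbf{a}})$ so that Lemma~\ref{Lemma non Archimedean equality} applies directly to $z$. This is false: the case $p \nmid x$ and $p \nmid y$ is perfectly possible (for instance $z=1$ with $p \nmid a_d$ gives $\sigma_{\mathbf{a}}(1,1) \equiv a_d \not\equiv 0 \bmod p$), and in that case $|z|_p = 1$ while $r_p(\psi_{\mathbf{a}})$ can equal $1$ (it does whenever some $a_i$ with $1 \leq i \leq d-2$ is coprime to $p$), so the hypothesis $|z|_p > r_p(\psi_{\mathbf{a}})$ of Lemma~\ref{Lemma non Archimedean equality} fails. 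Your later ``short case analysis on whether $p \mid x$ or $p \mid y$'' is therefore not exhaustive --- it silently omits $p \nmid xy$, which is the typical case --- and the subcase you flag for the $\psi_{\mathbf{a}}(z)$-trick, namely $p \mid x$, is actually vacuous under your standing assumption $p \nmid \sigma_{\mathbf{a}}(x,y)$, since every monomial in $\sigma_{\mathbf{a}}(x,y)$ is divisible by $x$. (You also write $r_p(\psi_{\mathbf{a}}) \approx p^{v_p(a_0)/(d-1)}$, but when $p \mid a_0$ and $p \nmid a_d$ one has $|a_0/a_d|_p^{1/(d-1)} = p^{-v_p(a_0)/(d-1)} < 1$; the sign of the exponent matters for deciding whether the hypothesis of Lemma~\ref{Lemma non Archimedean equality} is met.)

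The paper avoids this entirely by never trying to apply the local lemmas to $z$: it works with $\psi_{\mathbf{a}}(z)$ throughout, letting $w$ be the denominator of $\psi_{\mathbf{a}}(z)$ and splitting the primes $p \mid \mathrm{s}_k(a_0)$ according to whether $p \mid w$ or $p \nmid w$. For $p \mid w$ one has $|\psi_{\mathbf{a}}(z)|_p \geq p$, which comfortably clears $r_p(\psi_{\mathbf{a}}) \leq |a_d|_p^{-1/2}$ whenever $p^2 \nmid a_d$, so Lemma~\ref{Lemma non Archimedean equality} applies to $\psi_{\mathbf{a}}(z)$ without any casework on $v_p(x)$ or $v_p(y)$; for $p \nmid w$, the formula $w = a_0 y^d / \gcd(a_0 y^d, \sigma_{\mathbf{a}}(x,y))$ bounds the loss by $\log\gcd(\mathrm{s}_k(a_0),\sigma_{\mathbf{a}}(x,y))$. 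The back-conversion to $\hat{h}_{\psi_{\mathbf{a}}}(z)$ then uses \eqref{Equality global height} in the form $\sum_p \hat{\lambda}_{\psi_{\mathbf{a}},p}(\psi_{\mathbf{a}}(z)) \leq d\,\hat{h}_{\psi_{\mathbf{a}}}(z)$, which is where the extra factor of $d$ in $d(d-1)$ comes from. To repair your sketch you would need to either add the missing case $p \nmid xy$ (and there apply the lemma to $\psi_{\mathbf{a}}(z)$, not $z$), or --- more cleanly --- reorganize the entire argument around $w$ as the paper does, since your per-prime inequalities in the cases $p\mid y$ versus $p\nmid xy$ have different normalizations ($\hat{\lambda}_p(z)$ vs.\ $\frac1d\hat{\lambda}_p(\psi_{\mathbf{a}}(z))$) and it is exactly this reconciliation that the $w$-based split makes uniform.
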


\begin{proof}
Let $w \geq 1$ denote the denominator of $\psi_{\mathbf{a}}(z)$. We start by writing
\begin{equation}
\label{Equality sa0}
\log \mathrm{s}_k(a_0) = \sum_{\substack{p \mid \mathrm{s}_k(a_0) \\ p \mid w}} \log \frac1{|a_0|_p} + \sum_{\substack{p \mid \mathrm{s}_k(a_0) \\ p \nmid w}} \log \frac1{|a_0|_p}.
\end{equation}
For any prime $p$, the $p$-adic valuation of $\mathrm{s}_k(a_0)$ is at most $k-1$ so we clearly have the inequality
\begin{equation*}
\sum_{\substack{p \mid \mathrm{s}_k(a_0) \\ p \mid w}} \log \frac1{|a_0|_p} \leq (k-1) \sum_{\substack{p \mid \mathrm{s}_k(a_0) \\ p \mid w}} \log |\psi_{\mathbf{a}}(z)|_p.
\end{equation*}
We now remark that the upper bound \eqref{Upper bound rp} shows that if $p$ is a prime number dividing $w$ but such that $p^2 \nmid a_d$ then we are in position to use Lemma~\ref{Lemma non Archimedean equality}. Therefore, for each prime number $p$ dividing $\mathrm{s}_k(a_0)$ and $w$ we proceed to apply either Lemma~\ref{Lemma non Archimedean equality} if $p^2 \nmid a_d$ or Lemma~\ref{Lemma non Archimedean} if $p^2 \mid a_d$. Using the upper bound \eqref{Upper bound rp}, we deduce in particular that
\begin{align}
\begin{split}
\label{Upper bound after lemmas}
\sum_{\substack{p \mid \mathrm{s}_k(a_0) \\ p \mid w}} \log \frac1{|a_0|_p} \leq & \ (k-1) \sum_{p \mid \mathrm{s}_k(a_0)} \hat{\lambda}_{\psi_{\mathbf{a}},p}(\psi_{\mathbf{a}}(z)) - \frac{k-1}{d-1} \sum_{\substack{p \mid \gcd(\mathrm{s}_k(a_0), w) \\ p^2 \nmid a_d}} \log \left|\frac{a_d}{a_0}\right|_p \\
& + \frac{k-1}{2} \sum_{\substack{p \mid \mathrm{s}_k(a_0) \\ p^2 \mid a_d}} \log \frac1{|a_d|_p}.
\end{split}
\end{align}
Moreover, we note that
\begin{align}
\nonumber
\sum_{\substack{p \mid \gcd(\mathrm{s}_k(a_0),w) \\ p^2 \nmid a_d}} \log \left|\frac{a_d}{a_0}\right|_p & \geq \sum_{\substack{p \mid \gcd(\mathrm{s}_k(a_0),w) \\ p \nmid a_d}} \log \frac1{|a_0|_p} \\
\nonumber
& \geq \sum_{\substack{p \mid \mathrm{s}_k(a_0) \\ p \nmid a_d}} \log \frac1{|a_0|_p} - \sum_{\substack{p \mid \mathrm{s}_k(a_0) \\ p \nmid a_dw}} \log \frac1{|a_0|_p} \\
\label{Lower bound refinement}
& \geq \log \mathrm{s}_k(a_0) - \sum_{\substack{p \mid \mathrm{s}_k(a_0) \\ p \mid a_d}} \log \frac1{|a_0|_p} - \sum_{\substack{p \mid \mathrm{s}_k(a_0) \\ p \nmid w}} \log \frac1{|a_0|_p}.
\end{align}
Putting together the equality \eqref{Equality sa0} and the inequalities \eqref{Upper bound after lemmas} and \eqref{Lower bound refinement}, we derive
\begin{align}
\begin{split}
\label{Lower bound intermediate}
(k-1) \sum_{p \mid \mathrm{s}_k(a_0)} \hat{\lambda}_{\psi_{\mathbf{a}},p}(\psi_{\mathbf{a}}(z)) \geq & \ 
\frac{d+k-2}{d-1} \log \mathrm{s}_k(a_0) - \frac{d+k-2}{d-1} \sum_{\substack{p \mid \mathrm{s}_k(a_0) \\ p \nmid w}} \log \frac1{|a_0|_p} \\
& - \frac{k-1}{d-1} \sum_{\substack{p \mid \mathrm{s}_k(a_0) \\ p \mid a_d}} \log \frac1{|a_0|_p} - \frac{k-1}{2} \sum_{\substack{p \mid \mathrm{s}_k(a_0) \\ p^2 \mid a_d}} \log \frac1{|a_d|_p}.
\end{split}
\end{align}

Next, using the equalities \eqref{Equality product formula} and \eqref{Equality global height} we get
\begin{equation}
\label{Upper bound backwards}
\sum_{p \mid \mathrm{s}_k(a_0)} \hat{\lambda}_{\psi_{\mathbf{a}},p}(\psi_{\mathbf{a}}(z)) \leq d \cdot \hat{h}_{\psi_{\mathbf{a}}}(z).
\end{equation}
Furthermore, recalling the definition \eqref{Definition sigma} of the quantity $\sigma_{\mathbf{a}}(x,y)$, we see that it follows from the definition \eqref{Definition psi} of the polynomial $\psi_{\mathbf{a}}$ that
\begin{equation*}
w = \frac{a_0y^d}{\gcd(a_0y^d, \sigma_{\mathbf{a}}(x,y))}.
\end{equation*}
We thus deduce that
\begin{equation}
\label{Upper bound nmid w}
\sum_{\substack{p \mid \mathrm{s}_k(a_0) \\ p \nmid w}} \log \frac1{|a_0|_p} \leq \log \gcd(\mathrm{s}_k(a_0), \sigma_{\mathbf{a}}(x,y)).
\end{equation}
Putting together the lower bound \eqref{Lower bound intermediate} and the upper bounds \eqref{Upper bound backwards} and \eqref{Upper bound nmid w}, we derive
\begin{align*}
d (d-1) \hat{h}_{\psi_{\mathbf{a}}}(z) \geq & \ 
\frac{d+k-2}{k-1} \log \frac{\mathrm{s}_k(a_0)}{\gcd(\mathrm{s}_k(a_0), \sigma_{\mathbf{a}}(x,y))} - \sum_{\substack{p \mid \mathrm{s}_k(a_0) \\ p \mid a_d}} \log \frac1{|a_0|_p} \\
& - \frac{d-1}{2} \sum_{\substack{p \mid \mathrm{s}_k(a_0) \\ p^2 \mid a_d}} \log \frac1{|a_d|_p}.
\end{align*}

In the case $k=2$, we note that
\begin{equation*}
\sum_{\substack{p \mid \mathrm{s}_k(a_0) \\ p \mid a_d}} \log \frac1{|a_0|_p} \leq \log \gcd (\mathrm{s}_k(a_0), a_d).
\end{equation*}
Since we clearly have
\begin{equation*}
\sum_{\substack{p \mid \mathrm{s}_k(a_0) \\ p^2 \mid a_d}} \log \frac1{|a_d|_p} \leq \log \Delta_{\mathrm{s}_k(a_0)}(a_d),
\end{equation*}
we see that this completes the proof in the case $k=2$.

In the case $k=3$, we remark that
\begin{equation*}
\sum_{\substack{p \mid \mathrm{s}_k(a_0) \\ p \mid a_d}} \log \frac1{|a_0|_p} = \log \gcd (\mathrm{s}_k(a_0), a_d) + \sum_{\substack{p^2 \mid \mathrm{s}_k(a_0) \\ p^2 \nmid a_d}} \log \frac1{|a_d|_p}.
\end{equation*}
Hence, it follows from our assumption $d \geq 3$ that
\begin{equation*}
\sum_{\substack{p \mid \mathrm{s}_k(a_0) \\ p \mid a_d}} \log \frac1{|a_0|_p} + \frac{d-1}{2} \sum_{\substack{p \mid \mathrm{s}_k(a_0) \\ p^2 \mid a_d}} \log \frac1{|a_d|_p} \leq \log \gcd (\mathrm{s}_k(a_0), a_d) + \frac{d-1}{2} \log \Delta_{\mathrm{s}_k(a_0)}(a_d),
\end{equation*}
which completes the proof in the case $k = 3$.
\end{proof}

In order to establish Theorem~\ref{Theorem Lang} and the upper bound in Proposition~\ref{Proposition lambda} we will also need the following result which, given $\psi_{\mathbf{a}} \in \mathscr{P}_d$, provides us with an upper bound for the difference between the canonical height with respect to $\psi_{\mathbf{a}}$ and the Weil height.

\begin{lemma}
\label{Lemma upper bound}
Let $d \geq 2$ and $\psi_{\mathbf{a}} \in \mathscr{P}_d$. For any $z \in \mathbb{Q}$, we have
\begin{equation*}
\hat{h}_{\psi_{\mathbf{a}}}(z) - h(z) \leq \frac1{d-1} \log \mathscr{H}(\psi_{\mathbf{a}}) + \frac{\log d}{d-1}.
\end{equation*}
\end{lemma}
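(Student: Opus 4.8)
The plan is to bound $\hat{h}_{\psi_{\mathbf{a}}}(z) - h(z)$ by telescoping the difference across iterates, exploiting the fact that $\hat{h}_{\psi_{\mathbf{a}}}$ is characterized by the limit \eqref{Definition global height} together with the functional equation \eqref{Equality global height}. Concretely, writing
\begin{equation*}
\hat{h}_{\psi_{\mathbf{a}}}(z) - h(z) = \sum_{n \geq 0} \frac1{d^{n+1}} \left( h(\psi_{\mathbf{a}}^{n+1}(z)) - d \cdot h(\psi_{\mathbf{a}}^n(z)) \right),
\end{equation*}
it suffices to obtain a uniform bound $h(\psi_{\mathbf{a}}(w)) \leq d \cdot h(w) + \log \mathscr{H}(\psi_{\mathbf{a}}) + \log d$ valid for every $w \in \mathbb{Q}$, since then the geometric series $\sum_{n \geq 0} d^{-(n+1)} = 1/(d-1)$ produces exactly the claimed constant $\frac1{d-1}(\log \mathscr{H}(\psi_{\mathbf{a}}) + \log d)$. (Here I should be slightly careful that the one-step inequality I prove is an upper bound on $h(\psi_{\mathbf{a}}(w)) - d\,h(w)$ only, not a two-sided estimate, which is all that is needed.)

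First I would establish the one-step estimate by a direct local computation, or more simply by the following global argument. Write $w = x/y$ in lowest terms with $y \geq 1$, so that, clearing denominators in \eqref{Definition psi},
\begin{equation*}
\psi_{\mathbf{a}}(w) = \frac{a_d x^d + a_{d-2} x^{d-2} y^2 + \cdots + a_1 x y^{d-1} + a_0 y^d}{a_0 y^d}.
\end{equation*}
The height of this rational number is at most the maximum of the absolute values of numerator and denominator (before cancellation), and using the triangle inequality together with $\max\{|a_d|, \dots, |a_1|, a_0\} = \mathscr{H}(\psi_{\mathbf{a}})$ one gets that the numerator is bounded by $\mathscr{H}(\psi_{\mathbf{a}}) \cdot d \cdot \max\{|x|, y\}^d$ and the denominator by $\mathscr{H}(\psi_{\mathbf{a}}) \cdot \max\{|x|,y\}^d$; since $h(w) = \log \max\{|x|, y\}$ this yields $h(\psi_{\mathbf{a}}(w)) \leq d \cdot h(w) + \log \mathscr{H}(\psi_{\mathbf{a}}) + \log d$, as wanted.

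Then I would feed this into the telescoping identity above. Applying the one-step bound with $w = \psi_{\mathbf{a}}^n(z)$ gives $h(\psi_{\mathbf{a}}^{n+1}(z)) - d \cdot h(\psi_{\mathbf{a}}^n(z)) \leq \log \mathscr{H}(\psi_{\mathbf{a}}) + \log d$ for every $n \geq 0$, and summing the series with weights $d^{-(n+1)}$ finishes the proof. I expect the main (very mild) obstacle to be purely bookkeeping: making sure the telescoping series is written correctly and that one only needs the upper half of the standard height comparison, so that no lower bound on $h(\psi_{\mathbf{a}}(w))$ — which would require controlling cancellation in the numerator and denominator — is needed. In fact, one could alternatively invoke the standard inequality $|\hat{h}_{\psi} - h| \leq C(\psi)$ from the literature and simply track the constant, but giving the short self-contained telescoping argument keeps the section uniform with the other proofs in this subsection.
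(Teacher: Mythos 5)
Your proposal is correct and follows essentially the same route as the paper: you clear denominators in $\psi_{\mathbf{a}}(w)$, use the triangle inequality and the bound $\max\{|a_d|,\dots,|a_1|,a_0\} = \mathscr{H}(\psi_{\mathbf{a}})$ on numerator and denominator to get the one-step inequality $h(\psi_{\mathbf{a}}(w)) \leq d\,h(w) + \log(d\,\mathscr{H}(\psi_{\mathbf{a}}))$, and then telescope with weights $d^{-(n+1)}$ to produce the factor $1/(d-1)$. The paper phrases the telescoping via finite partial sums and a limit as $m \to \infty$ rather than an infinite series from the outset, but this is a cosmetic difference only.
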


\begin{proof}
For any $n \geq 0$ we let $x_n \in \mathbb{Z}$ and $y_n \geq 1$ be respectively the numerator and the denominator of $\psi_{\mathbf{a}}^n(z)$. Given $x,y \in \mathbb{Z}$, recall the definition \eqref{Definition sigma} of the quantity $\sigma_{\mathbf{a}}(x,y)$. It follows from the definition \eqref{Definition psi} of the polynomial $\psi_{\mathbf{a}}$ that for any $n \geq 0$, we have
\begin{equation*}
\psi_{\mathbf{a}}^{n+1}(z) = \frac{\sigma_{\mathbf{a}}(x_n,y_n) + a_0 y_n^d}{a_0 y_n^d}.
\end{equation*}
We thus have
\begin{equation*}
h(\psi_{\mathbf{a}}^{n+1}(z)) \leq \log \max \left\{ \left| \sigma_{\mathbf{a}}(x_n,y_n) + a_0 y_n^d \right|, a_0 y_n^d \right\}.
\end{equation*}
But it is clear that
\begin{equation*}
\left| \sigma_{\mathbf{a}}(x_n,y_n) + a_0 y_n^d \right| \leq d \mathscr{H}(\psi_{\mathbf{a}}) \max \left\{ |x_n|^d, y_n^d \right\}.
\end{equation*}
As a result, we see that
\begin{equation*}
h(\psi_{\mathbf{a}}^{n+1}(z)) \leq \log (d \mathscr{H}(\psi_{\mathbf{a}})) + d \cdot h(\psi_{\mathbf{a}}^n(z)).
\end{equation*}
Hence, we derive
\begin{equation*}
\frac1{d^{n+1}} h(\psi_{\mathbf{a}}^{n+1}(z)) - \frac1{d^n} h(\psi_{\mathbf{a}}^n(z)) \leq \frac1{d^{n+1}} \log (d \mathscr{H}(\psi_{\mathbf{a}})).
\end{equation*}
Summing this inequality over the integer $n$, we deduce that for any $m \geq 0$, we have
\begin{equation*}
\frac1{d^m} h(\psi_{\mathbf{a}}^m(z)) - h(z) \leq \frac1{d^m} \cdot \frac{d^m-1}{d-1} \log (d \mathscr{H}(\psi_{\mathbf{a}})).
\end{equation*}
Recalling the definition \eqref{Definition global height} of the canonical height and letting $m$ tend to $\infty$, we see that this completes the proof.
\end{proof}

\subsection{A subfamily of the family of all affine conjugacy classes of polynomials}

It will be useful to remark that a M\"{o}bius inversion shows that
\begin{equation}
\label{Estimate P}
\# \mathscr{P}_d(X) = \frac{2^{d-1}}{\zeta(d)} X^d \left( 1 + O \left( \frac{\log X}{X} \right) \right).
\end{equation}

For $n \geq 1$ we let $\rad(n)$ denote the radical of the integer $n$, that is
\begin{equation*}
\rad(n) = \prod_{p \mid n} p.
\end{equation*}
It was proved by de Bruijn \cite[Theorem~1]{MR147461} that
\begin{equation*}
\log \left( \sum_{n \leq X} \frac1{\rad(n)} \right) \sim \left( \frac{8 \log X}{\log \log X} \right)^{1/2}.
\end{equation*}
The following result is an immediate consequence of this estimate.

\begin{lemma}
\label{Lemma dB}
Let $\varepsilon > 0$. We have
\begin{equation*}
\sum_{n \leq X} \frac1{\rad(n)} \ll X^{\varepsilon},
\end{equation*}
where the implied constant depends at most on $\varepsilon$.
\end{lemma}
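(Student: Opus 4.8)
The plan is to deduce the bound directly from de Bruijn's asymptotic formula quoted just above. First I would note that
\[
\left( \frac{8 \log X}{\log \log X} \right)^{1/2} = o(\log X) \qquad (X \to \infty),
\]
so that de Bruijn's estimate gives $\log \left( \sum_{n \leq X} 1/\rad(n) \right) = o(\log X)$. Hence for every fixed $\varepsilon > 0$ there is an $X_0 = X_0(\varepsilon) \geq 1$ with
\[
\sum_{n \leq X} \frac1{\rad(n)} \leq X^{\varepsilon} \qquad \text{for all } X \geq X_0.
\]
For $1 \leq X < X_0$ one has $\sum_{n \leq X} 1/\rad(n) \leq \sum_{n \leq X_0} 1/\rad(n)$, which is a constant depending only on $\varepsilon$, and since $X^{\varepsilon} \geq 1$ on this range the claimed inequality holds there too after enlarging the implied constant. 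This settles the lemma.

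I would also record the elementary alternative, which bypasses de Bruijn altogether by Rankin's trick: for any $\varepsilon > 0$,
\[
\sum_{n \leq X} \frac1{\rad(n)} \leq X^{\varepsilon} \sum_{n=1}^{\infty} \frac1{n^{\varepsilon} \rad(n)} = X^{\varepsilon} \prod_p \left( 1 + \frac1{p (p^{\varepsilon} - 1)} \right),
\]
where the Euler factorization follows from the multiplicativity of $1/(n^{\varepsilon}\rad(n))$ and a geometric series. The product converges because $1/(p(p^{\varepsilon}-1)) \leq (1 - 2^{-\varepsilon})^{-1} p^{-1-\varepsilon}$, which yields the same conclusion with an explicit (though $\varepsilon$-dependent) constant.

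There is no genuine obstacle here: the only point requiring the slightest care is the passage from the asymptotic relation "$\sim$" to a one-sided inequality valid for \emph{all} $X \geq 1$, which is exactly the splitting into the ranges $X \geq X_0$ and $X < X_0$ performed above. The statement is, as asserted, an immediate consequence of the cited result.
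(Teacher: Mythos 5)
Your primary argument is exactly what the paper intends: the lemma is stated as an immediate consequence of de Bruijn's asymptotic for $\log\bigl(\sum_{n\le X}1/\rad(n)\bigr)$, and your passage from the "$\sim$" to a one-sided bound valid for all $X\ge 1$ (splitting into $X\ge X_0$ and $1\le X<X_0$) is the small amount of bookkeeping the paper leaves implicit. That part matches.

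Your Rankin-trick alternative is a genuinely different and more elementary route, and it is correct: $1/(n^{\varepsilon}\rad(n))$ is multiplicative, the Euler factor at $p$ is $1+\sum_{k\ge 1}p^{-k\varepsilon}/p = 1+1/(p(p^{\varepsilon}-1))$, and the product converges since $p^{\varepsilon}-1\ge (1-2^{-\varepsilon})p^{\varepsilon}$. What it buys is independence from de Bruijn's theorem (which gives a much sharper, essentially optimal, rate that the paper does not actually need) at the cost of a worse and $\varepsilon$-dependent, but explicit, constant. Either route suffices for the application, since only the $X^{\varepsilon}$ saving is used downstream.
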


Given $\ell, m \in \mathbb{Z}$, recall the definition \eqref{Definition Delta} of the largest divisor $\Delta_{\ell}(m)$ of $m$ whose radical divides $\ell$. For $\delta \in (0,1)$, we introduce the set
\begin{equation}
\label{Definition Pddelta}
\mathscr{P}_d^{(\delta)}(X) = \left\{ \psi_{\mathbf{a}} \in \mathscr{P}_d(X) : \Delta_{a_0}(a_d) \leq X^{\delta} \right\}.
\end{equation}
Our next task is to prove that for any $\delta \in (0,1)$, the cardinality of the set $\mathscr{P}_d^{(\delta)}(X)$ is asymptotically as large as the cardinality of the total set $\mathscr{P}_d(X)$.

\begin{lemma}
\label{Lemma restricting}
Let $d \geq 2$. Let also $\delta \in (0,1)$ and $\varepsilon > 0$. We have
\begin{equation*}
\# \mathscr{P}_d^{(\delta)}(X) = \# \mathscr{P}_d(X) \left( 1 + O \left( \frac1{X^{\delta-\varepsilon}} \right) \right),
\end{equation*}
where the implied constant depends at most on $d$, $\delta$ and $\varepsilon$.
\end{lemma}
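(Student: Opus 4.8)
The plan is to bound the size of the complementary set, namely the number of $\psi_{\mathbf{a}} \in \mathscr{P}_d(X)$ for which $\Delta_{a_0}(a_d) > X^{\delta}$, and to show this count is $O(X^{d-\delta+\varepsilon})$. Since $\#\mathscr{P}_d(X) \asymp X^d$ by \eqref{Estimate P}, this immediately gives $\#\mathscr{P}_d^{(\delta)}(X) = \#\mathscr{P}_d(X)(1 + O(X^{-(\delta-\varepsilon)}))$, which is the claim.

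The key observation I would exploit is that the quantity $D := \Delta_{a_0}(a_d)$ being large forces \emph{two} divisibility constraints at once. Indeed, by the definition \eqref{Definition Delta}, $D$ is a divisor of $a_d$, and every prime dividing $D$ divides both $a_0$ and $a_d$, so $\rad(D) \mid a_0$. Thus if $\Delta_{a_0}(a_d) > X^{\delta}$, then with $D = \Delta_{a_0}(a_d)$ we have an integer in the range $X^{\delta} < D \le |a_d| \le X$ with $D \mid a_d$ and $\rad(D) \mid a_0$. I would then discard the primitivity condition (this only over-counts, which is harmless for an upper bound) and count by fixing $D$: the number of admissible $a_d$ with $|a_d| \le X$ and $D \mid a_d$ is $\ll X/D$, the number of admissible $a_0$ with $0 < a_0 \le X$ and $\rad(D) \mid a_0$ is $\ll X/\rad(D)$, and the remaining $d-2$ coordinates $a_{d-2},\dots,a_1$ contribute a factor $\ll X^{d-2}$. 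Summing over $D$ yields
\[
\#\left\{ \psi_{\mathbf{a}} \in \mathscr{P}_d(X) : \Delta_{a_0}(a_d) > X^{\delta} \right\} \ll X^{d} \sum_{X^{\delta} < D \le X} \frac{1}{D\,\rad(D)}.
\]

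To finish, I would bound the sum by pulling out $1/D < X^{-\delta}$ and invoking the de Bruijn estimate in the form of Lemma~\ref{Lemma dB}: $\sum_{X^{\delta} < D \le X} \frac{1}{D\,\rad(D)} \le X^{-\delta}\sum_{D \le X} \frac{1}{\rad(D)} \ll X^{-\delta+\varepsilon}$, with implied constant depending only on $\varepsilon$. Hence the complementary count is $\ll X^{d-\delta+\varepsilon}$, and combining with \eqref{Estimate P} completes the proof, the implied constants depending at most on $d$, $\delta$ and $\varepsilon$. There is no genuine obstacle here: the only non-elementary ingredient is Lemma~\ref{Lemma dB} (de Bruijn's bound on $\sum 1/\rad(n)$), and the sole point requiring care is recognizing the double divisibility $D \mid a_d$ and $\rad(D) \mid a_0$ — it is the product of these two independent savings, rather than either one separately, that beats the trivial count $X^d$.
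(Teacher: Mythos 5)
Your proposal is correct and follows essentially the same path as the paper's proof: bound the complement by fixing a large divisor $D$ of $a_d$ whose radical divides $a_0$, exploit the two independent savings $X/D$ and $X/\rad(D)$, and control the resulting sum $\sum_{X^\delta < D \le X} 1/(D\,\rad(D))$ via Lemma~\ref{Lemma dB}. The only cosmetic difference is that the paper sums over \emph{all} divisors $\ell \mid a_d$ with $\rad(\ell) \mid a_0$ and $\ell > X^\delta$ (over-counting harmlessly), whereas you pin down the specific $D = \Delta_{a_0}(a_d)$; the resulting estimates are identical.
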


\begin{proof}
We start by noting that
\begin{equation*}
\# \mathscr{P}_d^{(\delta)}(X) - \# \mathscr{P}_d(X) \ll X^{d-2} \sum_{0 < a_0, |a_d| \leq X} \# \left\{ \ell \mid a_d : \begin{array}{l l}
\rad(\ell) \mid a_0 \\
\ell > X^{\delta}
\end{array}
\right\}.
\end{equation*}
Therefore, we get
\begin{align*}
\# \mathscr{P}_d^{(\delta)}(X) - \# \mathscr{P}_d(X) & \ll X^{d-2} \sum_{X^{\delta} < \ell \leq X} \ \sum_{\substack{0 < a_0 \leq X \\ \rad(\ell) \mid a_0}} \ \sum_{\substack{0 < |a_d| \leq X \\ \ell \mid a_d}} 1 \\
& \ll X^d \sum_{X^{\delta} < \ell \leq X} \frac1{\ell \cdot \rad(\ell)} \\
& \ll X^{d-\delta} \sum_{\ell \leq X} \frac1{\rad(\ell)}.
\end{align*}
As a result, Lemma~\ref{Lemma dB} gives
\begin{equation*}
\# \mathscr{P}_d^{(\delta)}(X) - \# \mathscr{P}_d(X) \ll X^{d-\delta+ \varepsilon}.
\end{equation*}
Recalling the estimate \eqref{Estimate P}, we see that this completes the proof.
\end{proof}

\subsection{On the average number of rational numbers with small canonical height}

The following result will be the key tool in the proof of our results and we will invoke it repeatedly. More precisely, we will use it with $k=3$ to obtain the error term stated in Theorem~\ref{Theorem UBC}, and we will apply it with $k=2$ in the proof of Theorem~\ref{Theorem Lang} and Proposition~\ref{Proposition lambda statistical}.

\begin{lemma}
\label{Lemma useful}
Let $d \geq 3$ and $k \in \{2,3\}$. Let also $\delta \in (0,1)$ and $\varepsilon > 0$. Finally, let
\begin{equation*}
\alpha_{d,k} = \frac{(k-1)^2(d-1)}{k(d+k-2)}.
\end{equation*}
For $X, T \geq 1$, we have
\begin{equation*}
\sum_{\psi_{\mathbf{a}} \in \mathscr{P}_d^{(\delta)}(X)} \# \left\{ z \in \mathbb{Q}^{\times} : \hat{h}_{\psi_{\mathbf{a}}}(z) \leq \log T \right\} \ll X^{d-1 + 1/k + \delta \alpha_{d,k}/2 + \varepsilon} T^{2+d\alpha_{d,k}},
\end{equation*}
and
\begin{equation*}
\sum_{\psi_{\mathbf{a}} \in \mathscr{P}_d^{(\delta)}(X)} \# \left\{ z \in \mathbb{Q} : \hat{h}_{\psi_{\mathbf{a}}}(z) \leq \log T \right\} \ll X^{d-1+1/k +\delta \alpha_{d,k} /2 + \varepsilon} T^{d^2 \alpha_{d,k}},
\end{equation*}
where the implied constants depend at most on $d$, $\delta$ and $\varepsilon$.
\end{lemma}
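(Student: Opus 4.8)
The plan is to turn the analytic condition $\hat{h}_{\psi_{\mathbf{a}}}(z)\le\log T$ into an arithmetic divisibility statement via Lemma~\ref{Lemma a0}, to confine the numerator and denominator of $z$ via Lemma~\ref{Lemma Weil}, and then to count. On $\mathscr{P}_d^{(\delta)}(X)$ one has $\gcd(\mathrm{s}_k(a_0),a_d)\le\Delta_{\mathrm{s}_k(a_0)}(a_d)\le\Delta_{a_0}(a_d)\le X^{\delta}$, so for $z\in\mathbb{Q}^{\times}$ with numerator $x$ and denominator $y$ Lemma~\ref{Lemma a0} forces
\begin{equation*}
\frac{\mathrm{s}_k(a_0)}{\gcd(\mathrm{s}_k(a_0),\sigma_{\mathbf{a}}(x,y))}\le\gcd(\mathrm{s}_k(a_0),a_d)^{\frac{k-1}{d+k-2}}\,M,\qquad M:=\Bigl(T^{d(d-1)}X^{\frac{d-1}{2}\delta}\Bigr)^{\frac{k-1}{d+k-2}},
\end{equation*}
and one checks $M^{1-1/k}=T^{d\alpha_{d,k}}X^{\delta\alpha_{d,k}/2}$. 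Lemma~\ref{Lemma Weil} gives $y\le T\,\Delta_y(a_d)^{1/2}$ and $|x|\le d\,T\,\Delta_y(a_d)^{1/2}r_{\infty}(\psi_{\mathbf{a}})$; I would retain these arithmetic weights, since $\Delta_y(a_d)$ and $r_{\infty}(\psi_{\mathbf{a}})$ are typically bounded and $\psi_{\mathbf{a}}$ with either of them large are rare, in the spirit of Lemma~\ref{Lemma restricting}.

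To count, I would fix a divisor $e\mid\gcd(\mathrm{s}_k(a_0),\sigma_{\mathbf{a}}(x,y))$ witnessing the displayed inequality and sum over $e$. The number of admissible $a_0\le X$ is bounded by summing over $g=\gcd(\mathrm{s}_k(a_0),a_d)\mid a_d$, writing $a_0=ga_0'$ with $a_0'\le X/g$, and decomposing $a_0'$ into its $k$-free part times a $k$-full integer: since there are $O(Y^{1/k})$ $k$-full integers up to $Y$ and $\mathrm{s}_k(a_0')/\gcd(\mathrm{s}_k(a_0'),\sigma_{\mathbf{a}}(x,y))\le g^{(k-1)/(d+k-2)}M$, and since the exponent of $g$ resulting from combining these must be kept under control, this produces the factor $X^{1/k}M^{1-1/k}=X^{1/k}T^{d\alpha_{d,k}}X^{\delta\alpha_{d,k}/2}$ up to $X^{\varepsilon}$. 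Independently, for fixed $e$ and $(x,y)$ the condition $e\mid\sigma_{\mathbf{a}}(x,y)$ is a single linear congruence on $(a_d,a_{d-2},\dots,a_1)$ whose content equals $\gcd(e,x)$ (here one uses $\gcd(x,y)=1$ and $d\ge3$), and since $e\mid\mathrm{s}_k(a_0)\le X$ it has $\ll X^{d-1}\gcd(e,x)/e$ solutions in the box, with no lower-order term. Summing over $a_d,x,y$ — with the sizes of $x$ and $y$ controlled as in the first paragraph, so that this sum is of order $X^{1+\varepsilon}T^{2}$ rather than $X^{2}T^{2}$ — and then over $e$ (a convergent series once the $1/e$ from the congruence is combined with the $(X/e)^{1/k}$ coming from the count of $a_0$) gives the first estimate.

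For the large-$T$ range, say $T\ge X^{C}$ with $C=C(d,k)$ sufficiently large, the trivial bound $\#\{z\in\mathbb{Q}:\hat{h}_{\psi_{\mathbf{a}}}(z)\le\log T\}\ll T^{2}X^{2/(d-1)}$ coming from Lemma~\ref{Lemma upper bound} already lies below the target; hence we may assume $T<X^{C}$, and then all divisor-function factors occurring above are $\ll X^{\varepsilon}$, which settles the $\varepsilon$-bookkeeping. The second estimate follows from the first: for $z\in\mathbb{Q}^{\times}$ one checks that $2+d\alpha_{d,k}\le d^{2}\alpha_{d,k}$ for all $d\ge3$ and $k\in\{2,3\}$, so the first estimate already suffices; and the single point $z=0$ is treated separately using $\hat{h}_{\psi_{\mathbf{a}}}(0)=\tfrac1d\hat{h}_{\psi_{\mathbf{a}}}(1)$ — which follows from $\psi_{\mathbf{a}}(0)=1$ and \eqref{Equality global height} — by running the same argument at the point $z=1$, where there is no sum over $x$ and $y$ but $T$ is replaced by $T^{d}$, so that $\#\{\psi_{\mathbf{a}}\in\mathscr{P}_d^{(\delta)}(X):\hat{h}_{\psi_{\mathbf{a}}}(0)\le\log T\}\ll X^{d-1+1/k+\delta\alpha_{d,k}/2+\varepsilon}T^{d^{2}\alpha_{d,k}}$.

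I expect the heart of the matter to be the count of the second paragraph: one must simultaneously extract the saving coming from the $k$-full structure of $a_0$, the saving coming from the linear congruence satisfied by $(a_d,a_{d-2},\dots,a_1)$, and the saving coming from the rarity of $\psi_{\mathbf{a}}$ with $\Delta_y(a_d)$ or $r_{\infty}(\psi_{\mathbf{a}})$ large, while ensuring that the single inequality provided by Lemma~\ref{Lemma a0} is not used twice. In particular, controlling the contribution of the primes of $\mathrm{s}_k(a_0)$ that divide $a_d$ is the delicate step, and it is precisely this that is responsible for the exponent of $X^{\delta}$ being $\alpha_{d,k}/2$ and not larger.
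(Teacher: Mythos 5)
The overall strategy matches the paper's: pass from $\hat{h}_{\psi_{\mathbf{a}}}(z)\le\log T$ to a size bound on the numerator $x$ and denominator $y$ via Lemma~\ref{Lemma Weil}, to a divisibility condition on $\sigma_{\mathbf{a}}(x,y)$ via Lemma~\ref{Lemma a0} (using $\Delta_{\mathrm{s}_k(a_0)}(a_d)\le X^{\delta}$ on $\mathscr{P}_d^{(\delta)}(X)$), count via a linear congruence, and handle $a_0$ through its $k$-full/$k$-free decomposition and \eqref{Upper bound kfull}; the treatment of $z=0$ via $\psi_{\mathbf{a}}(0)=1$ and \eqref{Equality global height} is also the paper's. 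However, your plan to sum the linear congruence over \emph{all} of $(a_d,a_{d-2},\dots,a_1)$ at once, while ``independently'' counting $a_0$, is not coherent as stated: the ranges of $x$ and $y$ depend on $a_d$ through both $r_{\infty}(\psi_{\mathbf{a}})$ and $\Delta_y(a_d)$, so $a_d$ cannot simultaneously be a free variable inside a $(2X+1)^{d-1}/e$-type congruence count and an outer parameter controlling the $(x,y)$-box. The paper keeps $a_d$ outermost, runs the congruence count over $a_1$ alone (getting $\ll X\gcd(x,D)\gcd(D,a_d)/D$, where the $\gcd(D,a_d)$ factor comes from $\gcd(y^{d-1},D)\mid\gcd(D,a_d)$, a point you avoid at the cost of the order-of-summation problem), and only then sums over $x$, $y$, $D$, $a_0$, $a_d$ in that order.

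A second real gap is the handling of the arithmetic weight $\Delta_y(a_d)^{1/2}$ in the $x,y$-sums and its subsequent $a_d$-sum, which you wave off with ``in the spirit of Lemma~\ref{Lemma restricting}.'' That lemma controls $\Delta_{a_0}(a_d)$ only; it says nothing about $\Delta_y(a_d)$ as $y$ ranges over the box, and the weight cannot be discarded by shrinking the family of $\psi_{\mathbf{a}}$. The paper must actually carry out the sum $\sum_{y\le T\Delta_y(a_d)^{1/2}}\Delta_y(a_d)^{1/2}\ll T\,\xi(a_d)$ with $\xi$ as in \eqref{Definition xi}, and then bound $\sum_{0<|a_d|\le X}\xi(a_d)/|a_d|^{1/2}\ll X^{1/2+\varepsilon}$ using the de Bruijn estimate of Lemma~\ref{Lemma dB}; this is where the precise shape of the final exponent is decided and it requires more than a genericity argument. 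Finally, a small point: the trivial bound for the large-$T$ regime comes from Lemma~\ref{Lemma lower bound} (which gives $h(z)\le\hat{h}_{\psi_{\mathbf{a}}}(z)+\tfrac12\log\mathscr{H}(\psi_{\mathbf{a}})+\log d$), not Lemma~\ref{Lemma upper bound}, and yields $\ll T^2 X$ per polynomial rather than $T^2X^{2/(d-1)}$; the paper dispenses with this case distinction altogether because the divisor bound and Lemma~\ref{Lemma dB} already give a uniform $X^{\varepsilon}$.
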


\begin{proof}
We start by proving the first statement. Recall the definition \eqref{Definition r} of the quantity $r_{\infty}(\psi_{\mathbf{a}})$. We note that for $\psi_{\mathbf{a}} \in \mathscr{P}_d^{(\delta)}(X)$ we have $|a_i| \leq X$ for any $i \in \{0, \dots, d-2\}$, so the assumption $d \geq 3$ implies that
\begin{equation*}
r_{\infty}(\psi_{\mathbf{a}}) \leq \frac{X^{1/2}}{|a_d|^{1/2}}.
\end{equation*}
In addition, we recall that since $\psi_{\mathbf{a}} \in \mathscr{P}_d^{(\delta)}(X)$ we have in particular $\Delta_{\mathrm{s}_k(a_0)}(a_d) \leq X^{\delta}$. As a result, applying Lemmas~\ref{Lemma Weil} and \ref{Lemma a0} and letting respectively $x \in \mathbb{Z}$ and $y \geq 1$ denote the numerator and the denominator of $z \in \mathbb{Q}^{\times}$, we find that
\begin{equation*}
\# \mathscr{B}^{\times}_{\psi_{\mathbf{a}}}(T) \leq \# \left\{ z \in \mathbb{Q}^{\times} : 
\begin{array}{l}
\displaystyle{x \ll \frac{X^{1/2} T \Delta_y(a_d)^{1/2}}{|a_d|^{1/2}}, \ y \leq T \Delta_y(a_d)^{1/2}} \\
\vspace{-8pt} \\
\displaystyle{\gcd(\mathrm{s}_k(a_0), \sigma_{\mathbf{a}}(x,y)) \geq \frac{\mathrm{s}_k(a_0)}{A(X,T) \gcd(\mathrm{s}_k(a_0), a_d)^{\beta_{d,k}}}}
\end{array}
\right\},
\end{equation*}
where we have set
\begin{equation*}
\mathscr{B}^{\times}_{\psi_{\mathbf{a}}}(T) = \left\{ z \in \mathbb{Q}^{\times} : \hat{h}_{\psi_{\mathbf{a}}}(z) \leq \log T \right\},
\end{equation*}
and
\begin{equation}
\label{Definition A}
A(X,T) = \left( X^{\delta/2} T^d \right)^{k \alpha_{d,k}/(k-1)},
\end{equation}
and also
\begin{equation}
\label{Definition beta}
\beta_{d,k} = \frac{k-1}{d+k-2}.
\end{equation}
We thus have
\begin{align*}
\sum_{|a_1| \leq X} \# \mathscr{B}^{\times}_{\psi_{\mathbf{a}}}(T) \ll & \
\sum_{\substack{D \mid \mathrm{s}_k(a_0) \\ D \geq \mathrm{s}_k(a_0)/A(X,T) \gcd(\mathrm{s}_k(a_0), a_d)^{\beta_{d,k}}}} \sum_{0 < y \leq T \Delta_y(a_d)^{1/2}} \\
& \sum_{\substack{0 < |x| \ll X^{1/2} T \Delta_y(a_d)^{1/2} / |a_d|^{1/2} \\ \gcd(x,y) = 1}}
\sum_{\substack{|a_1| \leq X \\ \sigma_{\mathbf{a}}(x,y) \equiv 0 \bmod{D}}} 1.
\end{align*}
Since $D$ is cubefree and $\gcd(x,y) = 1$, the congruence $\sigma_{\mathbf{a}}(x,y) \equiv 0 \bmod{D}$ implies that
\begin{equation*}
\gcd(y^{d-1}, D) \mid \gcd(D,a_d).
\end{equation*}
Using the fact that $D \leq X$, we thus see that we have in particular
\begin{equation*}
\sum_{\substack{|a_1| \leq X \\ \sigma_{\mathbf{a}}(x,y) \equiv 0 \bmod{D}}} 1 \ll \frac{X \gcd(x,D) \gcd(D,a_d)}{D}.
\end{equation*}
Therefore, letting $\tau$ denote the divisor function, the summation over $x$ yields
\begin{align}
\begin{split}
\label{Upper bound sum B}
\sum_{|a_1| \leq X} \# \mathscr{B}^{\times}_{\psi_{\mathbf{a}}}(T) \ll & \ \frac{X^{3/2} T}{|a_d|^{1/2}}
\left( \sum_{0 < y \leq T \Delta_y(a_d)^{1/2}} \Delta_y(a_d)^{1/2} \right) \\
& \sum_{\substack{D \mid \mathrm{s}_k(a_0) \\ D \geq \mathrm{s}_k(a_0)/A(X,T) \gcd(\mathrm{s}_k(a_0), a_d)^{\beta_{d,k}}}} \frac{\gcd(D,a_d) \tau(D)}{D}.
\end{split}
\end{align}
It is convenient to introduce the arithmetic function $\xi$ defined for nonzero $n \in \mathbb{Z}$ by
\begin{equation}
\label{Definition xi}
\xi(n) = \sum_{\ell \mid n} \frac{\ell}{\rad(\ell)}.
\end{equation}
Recalling the definition \eqref{Definition Delta} of the largest divisor $\Delta_y(a_d)$ of $a_d$ whose radical divides $y$, we deduce
\begin{align*}
\sum_{0 < y \leq T \Delta_y(a_d)^{1/2}} \Delta_y(a_d)^{1/2} & \ll \sum_{\ell \mid a_d} \ell^{1/2} \sum_{\substack{0 < y \leq T \ell^{1/2} \\ \rad(\ell) \mid y}} 1 \\
& \ll T \xi(a_d).
\end{align*}
Moreover, we recall that the divisor bound states that for any $n \geq 1$ and $\varepsilon > 0$, we have
\begin{equation}
\label{Upper bound divisor}
\tau(n) \ll n^{\varepsilon}.
\end{equation}
Therefore, recalling the upper bound \eqref{Upper bound sum B} we see that
\begin{equation*}
\sum_{|a_1| \leq X} \# \mathscr{B}^{\times}_{\psi_{\mathbf{a}}}(T) \ll \frac{X^{3/2 + \varepsilon} T^2 \xi(a_d)}{|a_d|^{1/2}}
\sum_{\substack{D \mid \mathrm{s}_k(a_0) \\ D \geq \mathrm{s}_k(a_0)/A(X,T) \gcd(\mathrm{s}_k(a_0), a_d)^{\beta_{d,k}}}} \frac{\gcd(D,a_d)}{D}.
\end{equation*}
We now split the sum over $a_0$ into sums over its $k$-full part $a_0/\mathrm{s}_k(a_0)$ and its $k$-free part $\mathrm{s}_k(a_0)$. The classical upper bound for the number of $k$-full numbers with bounded absolute value asserts that
\begin{equation}
\label{Upper bound kfull}
\# \left\{ b \in \mathbb{Z} :
\begin{array}{l l}
0 < b \leq Y \\
\forall p \ \ p \mid b \implies p^k \mid b
\end{array}
\right\} \ll Y^{1/k}.
\end{equation}
As a result, we deduce that
\begin{equation*}
\sum_{a_0, |a_1| \leq X} \# \mathscr{B}^{\times}_{\psi_{\mathbf{a}}}(T) \ll \frac{X^{3/2 + 1/k + \varepsilon} T^2 \xi(a_d)}{|a_d|^{1/2}} \sum_{D \leq X} \frac{\gcd(D,a_d)}{D} \sum_{\substack{s \leq D A(X,T) \gcd(s, a_d)^{\beta_{d,k}} \\ D \mid s}} \frac1{s^{1/k}}.
\end{equation*}
But we easily get
\begin{align*}
\sum_{\substack{s \leq D A(X,T) \gcd (s, a_d)^{\beta_{d,k}} \\ D \mid s}} \frac1{s^{1/k}} & \ll \frac1{D^{1/k}} \sum_{u \leq A(X,T) \gcd(Du, a_d)^{\beta_{d,k}}} \frac1{u^{1/k}} \\
& \ll \frac1{D^{1/k}} \sum_{\ell \mid a_d} \frac1{\ell^{1/k}} \sum_{v \leq A(X,T) \gcd(D, a_d)^{\beta_{d,k}}/\ell^{1-\beta_{d,k}}} \frac1{v^{1/k}} \\ 
& \ll \frac{A(X,T)^{1-1/k} \gcd(D, a_d)^{(k-1)\beta_{d,k}/k}}{D^{1/k}} \sum_{m \mid a_d} \frac1{m^{1-(k-1)\beta_{d,k}/k}}.
\end{align*}
Moreover, we note that it follows from the definition \eqref{Definition beta} of the quantity $\beta_{d,k}$ and our assumptions $d \geq 3$ and $k \in \{2,3\}$ that
\begin{equation}
\label{Upper bound beta}
\beta_{d,k} \leq \frac1{k-1}.
\end{equation}
Hence, an application of the divisor bound \eqref{Upper bound divisor} gives
\begin{equation}
\label{Upper bound sum s}
\sum_{\substack{s \leq D A(X,T) \gcd (s, a_d)^{\beta_{d,k}} \\ D \mid s}} \frac1{s^{1/k}} \ll \frac{X^{\varepsilon} A(X,T)^{1-1/k} \gcd(D, a_d)^{(k-1)\beta_{d,k}/k}}{D^{1/k}}.
\end{equation}
Recalling the definition \eqref{Definition A} of the quantity $A(X,T)$, we see that we have obtained
\begin{equation*}
\sum_{a_0, |a_1| \leq X} \# \mathscr{B}^{\times}_{\psi_{\mathbf{a}}}(T) \ll \frac{X^{3/2 + 1/k + \delta \alpha_{d,k}/2 + 2 \varepsilon} T^{2+d\alpha_{d,k}} \xi(a_d)}{|a_d|^{1/2}} \sum_{D \leq X} \frac{\gcd(D,a_d)^{1+(k-1)\beta_{d,k}/k}}{D^{1+1/k}}.
\end{equation*}
But we have
\begin{equation*}
\sum_{D \leq X} \frac{\gcd(D,a_d)^{1+(k-1)\beta_{d,k}/k}}{D^{1+1/k}} \ll \sum_{m \mid a_d} \frac1{m^{1/k-(k-1)\beta_{d,k}/k}}.
\end{equation*}
Therefore, using the upper bound \eqref{Upper bound beta} and applying the divisor bound \eqref{Upper bound divisor} once again, we derive
\begin{equation*}
\sum_{\psi_{\mathbf{a}} \in \mathscr{P}_d^{(\delta)}(X)} \# \mathscr{B}^{\times}_{\psi_{\mathbf{a}}}(T) \ll X^{d-3/2 + 1/k + \delta \alpha_{d,k}/2 + 3 \varepsilon} T^{2+d\alpha_{d,k}} \sum_{0 < |a_d| \leq X} \frac{\xi(a_d)}{|a_d|^{1/2}}.
\end{equation*}
But recalling the definition \eqref{Definition xi} of the arithmetic function $\xi$, we observe that
\begin{align*}
\sum_{0 < |a_d| \leq X} \frac{\xi(a_d)}{|a_d|^{1/2}} & \ll \sum_{\ell \leq X} \frac{\ell^{1/2}}{\rad(\ell)} \sum_{0 < |a| \leq X/\ell} \frac1{|a|^{1/2}} \\
& \ll X^{1/2} \sum_{\ell \leq X} \frac1{\rad(\ell)}.
\end{align*}
As a result, we see that an application of Lemma~\ref{Lemma dB} completes the proof of the first statement.

Next, we establish the second statement. We start by noting that the assumptions $d \geq 3$ and $k \in \{2,3\}$ imply that
\begin{equation*}
2 + d \alpha_{d,k} \leq d^2 \alpha_{d,k}.
\end{equation*}
Hence, we see that it suffices to prove that
\begin{equation}
\label{Upper bound z=0}
\sum_{\substack{\psi_{\mathbf{a}} \in \mathscr{P}_d^{(\delta)}(X) \\ \hat{h}_{\psi_{\mathbf{a}}}(0) \leq \log T }} 1 \ll X^{d-1+1/k +\delta \alpha_{d,k} /2 + \varepsilon} T^{d^2 \alpha_{d,k}}.
\end{equation}
The equalities $\psi_{\mathbf{a}}(0)=1$ and \eqref{Equality global height} show that if $\hat{h}_{\psi_{\mathbf{a}}}(0) \leq \log T$ then $\hat{h}_{\psi_{\mathbf{a}}}(1) \leq \log T^d$. In addition, since $\psi_{\mathbf{a}} \in \mathscr{P}_d^{(\delta)}(X)$ we have in particular $\Delta_{\mathrm{s}_k(a_0)}(a_d) \leq X^{\delta}$. An application of Lemma~\ref{Lemma a0} with $z=1$ thus yields
\begin{equation*}
\sum_{\substack{\psi_{\mathbf{a}} \in \mathscr{P}_d^{(\delta)}(X) \\ \hat{h}_{\psi_{\mathbf{a}}}(0) \leq \log T }} 1 \leq
\sum_{\psi_{\mathbf{a}} \in \mathscr{P}_d^{(\delta)}(X)}
\# \left\{ D \mid \mathrm{s}_k(a_0) : 
\begin{array}{l}
\displaystyle{D \geq \frac{\mathrm{s}_k(a_0)}{B(X,T) \gcd (\mathrm{s}_k(a_0), a_d)^{\beta_{d,k}}}} \\
\vspace{-8pt} \\
\sigma_{\mathbf{a}}(1,1) \equiv 0 \bmod{D}
\end{array}
\right\},
\end{equation*}
where we have set
\begin{equation}
\label{Definition B}
B(X,T) = \left( X^{\delta/2} T^{d^2} \right)^{k \alpha_{d,k}/(k-1)}.
\end{equation}
We proceed to carry out the summation over $a_1$ first. Using the fact that $D \leq X$, we get
\begin{align*}
\sum_{\substack{\psi_{\mathbf{a}} \in \mathscr{P}_d^{(\delta)}(X) \\ \hat{h}_{\psi_{\mathbf{a}}}(0) \leq \log T }} 1 & \leq
\sum_{\substack{|a_2|, \dots, |a_{d-2}| \leq X \\ 0 < a_0, |a_d| \leq X}} \
\sum_{\substack{D \mid \mathrm{s}_k(a_0) \\ D \geq \mathrm{s}_k(a_0) / B(X,T) \gcd (\mathrm{s}_k(a_0), a_d)^{\beta_{d,k}}}} \
\sum_{\substack{|a_1| \leq X \\ \sigma_{\mathbf{a}}(1,1) \equiv 0 \bmod{D}}} 1 \\
& \ll X \sum_{\substack{|a_2|, \dots, |a_{d-2}| \leq X \\ 0 < a_0, |a_d| \leq X}} \
\sum_{\substack{D \mid \mathrm{s}_k(a_0) \\ D \geq \mathrm{s}_k(a_0) / B(X,T) \gcd (\mathrm{s}_k(a_0), a_d)^{\beta_{d,k}}}} \frac1{D}.
\end{align*}
Once again we handle the summation over $a_0$ by first summing over its $k$-full part $a_0/\mathrm{s}_k(a_0)$ using the upper bound \eqref{Upper bound kfull}. This yields
\begin{equation*}
\sum_{\substack{\psi_{\mathbf{a}} \in \mathscr{P}_d^{(\delta)}(X) \\ \hat{h}_{\psi_{\mathbf{a}}}(0) \leq \log T}} 1 \leq X^{d-2+1/k} \sum_{0 < |a_d| \leq X} \ \sum_{D \leq X} \frac1{D} \sum_{\substack{s \leq D B(X,T) \gcd (s,a_d)^{\beta_{d,k}} \\ D \mid s}} \frac1{s^{1/k}}.
\end{equation*}
Recalling the definition \eqref{Definition B} of the quantity $B(X,T)$ and appealing to the upper bound \eqref{Upper bound sum s} with $A(X,T)$ replaced by $B(X,T)$, we derive
\begin{equation*}
\sum_{\substack{\psi_{\mathbf{a}} \in \mathscr{P}_d^{(\delta)}(X) \\ \hat{h}_{\psi_{\mathbf{a}}}(0) \leq \log T }} 1 \leq X^{d-2+1/k +\delta \alpha_{d,k} /2 + \varepsilon} T^{d^2 \alpha_{d,k}} \sum_{0 < |a_d| \leq X} \sum_{D \leq X} \frac{ \gcd(D, a_d)^{(k-1)\beta_{d,k}/k}}{D^{1+1/k}}.
\end{equation*}
But using the upper bound \eqref{Upper bound beta} we easily get
\begin{equation*}
\sum_{0 < |a_d| \leq X} \sum_{D \leq X} \frac{ \gcd(D, a_d)^{(k-1)\beta_{d,k}/k}}{D^{1+1/k}} \ll X.
\end{equation*}
We thus see that the upper bound \eqref{Upper bound z=0} follows, which completes the proof.
\end{proof}

\section{The uniform boundedness conjecture on average}

\label{Section UBC}

Our goal in this section is to furnish the proof of Theorem~\ref{Theorem UBC}. The following result is a direct consequence of the work of Benedetto \cite{MR2339471} and provides us with a convenient bound for the number of rational preperiodic points of a polynomial $\psi_{\mathbf{a}} \in \mathscr{P}_d$.

\begin{lemma}
\label{Lemma Benedetto}
Let $d \geq 2$ and $\psi_{\mathbf{a}} \in \mathscr{P}_d$. We have
\begin{equation*}
\# \mathrm{Prep}_{\mathbb{Q}}(\psi_{\mathbf{a}}) \ll 1 + \log \mathscr{H}(\psi_{\mathbf{a}}),
\end{equation*}
where the implied constant depends at most on $d$.
\end{lemma}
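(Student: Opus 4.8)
The plan is to derive the bound from Benedetto's Main Theorem \cite{MR2339471} together with an elementary count of the primes of bad reduction of $\psi_{\mathbf{a}}$. Recall that, in the notation of \cite{MR2339471}, if $S$ denotes the set consisting of the archimedean place of $\mathbb{Q}$ together with all primes $p$ at which $\psi_{\mathbf{a}}$ fails to have potentially good reduction, then Benedetto's result yields a bound of the shape
\begin{equation*}
\# \mathrm{Prep}_{\mathbb{Q}}(\psi_{\mathbf{a}}) \ll \# S \cdot \log( \# S + 2),
\end{equation*}
where the implied constant depends at most on $d$ (here we have also used that the preperiodic points of $\psi_{\mathbf{a}}$ in $\mathbb{Q}$ form a subset of its preperiodic points in $\mathbb{P}^1(\mathbb{Q})$). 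It thus suffices to show that $\# S \ll 1 + \log \mathscr{H}(\psi_{\mathbf{a}}) / \log \log \mathscr{H}(\psi_{\mathbf{a}})$, with an absolute implied constant.

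First I would determine which primes may belong to $S$. Homogenising, the degree $d$ morphism of $\mathbb{P}^1$ attached to $\psi_{\mathbf{a}}$ is given by $[X : Y] \mapsto [\sigma_{\mathbf{a}}(X,Y) + a_0 Y^d : a_0 Y^d]$, where $\sigma_{\mathbf{a}}$ is as in \eqref{Definition sigma}, and the common content of this pair of degree $d$ forms is $\gcd(a_d, a_{d-2}, \dots, a_0) = 1$ since $\mathbf{a} \in \mathbb{Z}_{\mathrm{prim}}^d$. Let $p$ be a prime with $p \nmid a_0 a_d$. Reducing modulo $p$, any common projective zero of these two forms would have to satisfy $a_0 Y^d \equiv 0 \bmod p$, hence $Y \equiv 0 \bmod p$; but then the first form takes the value $a_d X^d \not\equiv 0 \bmod p$. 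The two reductions therefore have no common zero, so $\psi_{\mathbf{a}}$ has good, and a fortiori potentially good, reduction at $p$. We conclude that $S \subseteq \{ \infty \} \cup \{ p : p \mid a_0 a_d \}$, and in particular $\# S \leq 1 + \omega(a_0) + \omega(a_d)$, where $\omega(n)$ denotes the number of distinct prime divisors of $n$.

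Finally, I would invoke the classical estimate $\omega(n) \ll \log n / \log \log n$, valid for every integer $n \geq 3$ with an absolute implied constant, together with the trivial bounds $|a_0|, |a_d| \leq \mathscr{H}(\psi_{\mathbf{a}})$. When $\mathscr{H}(\psi_{\mathbf{a}})$ is bounded this gives $\# S \ll 1$, hence $\# \mathrm{Prep}_{\mathbb{Q}}(\psi_{\mathbf{a}}) \ll 1$, which is acceptable. When $\mathscr{H}(\psi_{\mathbf{a}})$ is large we obtain $\# S \ll \log \mathscr{H}(\psi_{\mathbf{a}}) / \log \log \mathscr{H}(\psi_{\mathbf{a}})$, whence $\log( \# S + 2) \ll \log \log \mathscr{H}(\psi_{\mathbf{a}})$ and therefore
\begin{equation*}
\# \mathrm{Prep}_{\mathbb{Q}}(\psi_{\mathbf{a}}) \ll \# S \cdot \log( \# S + 2) \ll \frac{\log \mathscr{H}(\psi_{\mathbf{a}})}{\log \log \mathscr{H}(\psi_{\mathbf{a}})} \cdot \log \log \mathscr{H}(\psi_{\mathbf{a}}) = \log \mathscr{H}(\psi_{\mathbf{a}}).
\end{equation*}
Combining the two ranges gives the claimed inequality.

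I do not anticipate any real obstacle: the proof is a citation together with elementary number theory. The only points requiring a little attention are that the set $S$ appearing in Benedetto's theorem must be the set of places \emph{without potentially good reduction} --- our cruder overcount by all places dividing $a_0 a_d$ is harmless precisely because good reduction implies potentially good reduction --- and that the finitely many polynomials of small height must be disposed of separately. It is worth noting, moreover, that even a version of Benedetto's bound that is merely linear in $\# S$ would already suffice for the present statement, since the extra logarithmic factor is exactly what is consumed in passing from $\# S \ll \log \mathscr{H}(\psi_{\mathbf{a}}) / \log \log \mathscr{H}(\psi_{\mathbf{a}})$ to the slightly weaker bound $\ll \log \mathscr{H}(\psi_{\mathbf{a}})$.
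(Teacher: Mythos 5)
Your proposal is correct and follows essentially the same route as the paper: cite Benedetto's Main Theorem, observe that the primes of potentially bad reduction all divide $a_0 a_d$, and then finish with the elementary bound $\omega(n) \ll \log n / \log\log n$ together with $a_0, |a_d| \leq \mathscr{H}(\psi_{\mathbf{a}})$. The only difference is cosmetic: you spell out the resultant/content computation behind the containment $S \subseteq \{\infty\} \cup \{p : p \mid a_0 a_d\}$ and separate the small-height case explicitly, whereas the paper states this containment without proof and absorbs the small-height case into the additive constant.
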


\begin{proof}
The set of prime numbers where the polynomial $\psi_{\mathbf{a}}$ does not have potentially good reduction (see \cite[Definition~2.1]{MR2339471}) is contained in the set of prime divisors of $a_0 a_d$. Therefore, letting $\omega(n)$ denote the number of prime numbers dividing an integer $n \geq 1$ and appealing to Benedetto's result \cite[Main Theorem]{MR2339471}, we get
\begin{equation*}
\# \mathrm{Prep}_{\mathbb{Q}}(\psi_{\mathbf{a}}) \ll 1 + \omega(a_0 |a_d|) \log \left( 1 + \omega(a_0 |a_d|) \right).
\end{equation*}
Invoking the classical upper bound
\begin{equation*}
\omega(n) \ll \frac{\log n}{\log \log n},
\end{equation*}
we deduce
\begin{equation*}
\# \mathrm{Prep}_{\mathbb{Q}}(\psi_{\mathbf{a}}) \ll 1 + \log (a_0 |a_d|).
\end{equation*}
Since $a_0, |a_d| \leq \mathscr{H}(\psi_{\mathbf{a}})$, we see that this completes the proof.
\end{proof}

We are now ready to reveal the proof of Theorem~\ref{Theorem UBC}.

\begin{proof}[Proof of Theorem~\ref{Theorem UBC}]
We start by handling the case $d=2$. It follows from Lemma~\ref{Lemma d=2} that if a polynomial $\psi_{\mathbf{a}} \in \mathscr{P}_2$ has a rational preperiodic point then the integer $a_0$ must be squareful. As a result, combining Lemma~\ref{Lemma Benedetto} with the upper bound \eqref{Upper bound kfull}, we get
\begin{equation*}
\sum_{\psi_{\mathbf{a}} \in \mathscr{P}_2(X)} \# \mathrm{Prep}_{\mathbb{Q}}(\psi_{\mathbf{a}}) \ll X^{3/2} \log X.
\end{equation*}
Recalling the estimate \eqref{Estimate P} we thus derive
\begin{equation*}
\frac1{\# \mathscr{P}_2(X)} \sum_{\psi_{\mathbf{a}} \in \mathscr{P}_2(X)} \# \mathrm{Prep}_{\mathbb{Q}}(\psi_{\mathbf{a}}) \ll \frac{\log X}{X^{1/2}},
\end{equation*}
which completes the proof of Theorem~\ref{Theorem UBC} in the case $d=2$.

We now deal with the case $d \geq 3$. Given $\delta \in (0,1)$, recall the definition \eqref{Definition Pddelta} of the set $\mathscr{P}_d^{(\delta)}(X)$. Combining Lemmas~\ref{Lemma restricting} and \ref{Lemma Benedetto} we deduce that for any $\delta \in (0,1)$, we have
\begin{equation*}
\sum_{\psi_{\mathbf{a}} \in \mathscr{P}_d(X)} \# \mathrm{Prep}_{\mathbb{Q}}(\psi_{\mathbf{a}}) =
\sum_{\psi_{\mathbf{a}} \in \mathscr{P}_d^{(\delta)}(X)} \# \mathrm{Prep}_{\mathbb{Q}}(\psi_{\mathbf{a}}) + O \left( X^{d -\delta + \varepsilon} \right).
\end{equation*}
Applying Lemma~\ref{Lemma useful} with $T=1$, we see that for $k \in \{2,3 \}$, we have
\begin{equation*}
\sum_{\psi_{\mathbf{a}} \in \mathscr{P}_d(X)} \# \mathrm{Prep}_{\mathbb{Q}}(\psi_{\mathbf{a}}) \ll X^{d + \varepsilon} \left( \frac1{X^{1-1/k - \delta \alpha_{d,k}/2}} + \frac1{X^{\delta}} \right).
\end{equation*}
An easy calculation shows that the optimal choice of $\delta \in (0,1)$ is given by
\begin{equation*}
\delta = \frac{2(k-1)(d+k-2)}{k^2(d+1)-2k+d-1}.
\end{equation*}
Choosing $k = 3$ and using the estimate \eqref{Estimate P}, we see that this completes the proof of Theorem~\ref{Theorem UBC} in the case $d \geq 3$.
\end{proof}

\section{Dynamical successive minima of random polynomials}

\subsection{A statistical version of the dynamical Lang conjecture}

\label{Section Lang}

Our aim in this section is to establish Theorem~\ref{Theorem Lang}. The following result provides us with a sharp upper bound for the quantity $\lambda_1(\psi_{\mathbf{a}})$ under a mild assumption on the polynomial $\psi_{\mathbf{a}} \in \mathscr{P}_d$.

\begin{lemma}
\label{Lemma lambda1}
Let $d \geq 2$ and $\psi_{\mathbf{a}} \in \mathscr{P}_d$. If $0 \notin \mathrm{Prep}_{\mathbb{Q}}(\psi_{\mathbf{a}})$ then
\begin{equation*}
\lambda_1(\psi_{\mathbf{a}}) \leq \frac1{d(d-1)} \log \mathscr{H}(\psi_{\mathbf{a}}) + \frac{\log d}{d(d-1)}.
\end{equation*}
\end{lemma}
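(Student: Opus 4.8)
The plan is to bound $\lambda_1(\psi_{\mathbf{a}})$ from above by exhibiting an explicit rational point with small canonical height, and the obvious candidate is $z=0$. Since by hypothesis $0 \notin \mathrm{Prep}_{\mathbb{Q}}(\psi_{\mathbf{a}})$, the point $z=0$ is $\psi_{\mathbf{a}}$-dynamically independent, and therefore by the definition \eqref{Definition lambda1} of $\lambda_1$ (equivalently Definition~\ref{Definition successive} in the case $N=1$) we have the trivial bound $\lambda_1(\psi_{\mathbf{a}}) \leq \hat{h}_{\psi_{\mathbf{a}}}(0)$. So the whole task reduces to estimating $\hat{h}_{\psi_{\mathbf{a}}}(0)$.

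To estimate $\hat{h}_{\psi_{\mathbf{a}}}(0)$ I would use Lemma~\ref{Lemma upper bound}, which gives $\hat{h}_{\psi_{\mathbf{a}}}(z) - h(z) \leq \tfrac1{d-1}\log\mathscr{H}(\psi_{\mathbf{a}}) + \tfrac{\log d}{d-1}$ for every $z \in \mathbb{Q}$. The point is to apply this not at $z=0$ but at a suitable iterate, exploiting the functional equation \eqref{Equality global height}: for any $n \geq 0$ we have $\hat{h}_{\psi_{\mathbf{a}}}(0) = d^{-n}\hat{h}_{\psi_{\mathbf{a}}}(\psi_{\mathbf{a}}^n(0))$. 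Note $\psi_{\mathbf{a}}(0) = 1$, so $\hat{h}_{\psi_{\mathbf{a}}}(0) = \tfrac1d \hat{h}_{\psi_{\mathbf{a}}}(1)$, and more generally $\hat{h}_{\psi_{\mathbf{a}}}(0) = d^{-(n+1)} \hat{h}_{\psi_{\mathbf{a}}}(\psi_{\mathbf{a}}^n(1))$. Applying Lemma~\ref{Lemma upper bound} at $z = \psi_{\mathbf{a}}^n(1)$ and letting $n \to \infty$ causes the Weil-height term $h(\psi_{\mathbf{a}}^n(1))$ — which after division by $d^{n+1}$ converges to $\tfrac1d\hat{h}_{\psi_{\mathbf{a}}}(1) = \hat{h}_{\psi_{\mathbf{a}}}(0)$ — and the constant terms (divided by $d^{n+1} \to \infty$) to wash out in exactly the right way. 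More cleanly: just apply Lemma~\ref{Lemma upper bound} directly at $z=1$, giving $\hat{h}_{\psi_{\mathbf{a}}}(1) \leq \tfrac1{d-1}\log\mathscr{H}(\psi_{\mathbf{a}}) + \tfrac{\log d}{d-1}$ since $h(1) = 0$, and then divide by $d$ using $\hat{h}_{\psi_{\mathbf{a}}}(0) = \tfrac1d\hat{h}_{\psi_{\mathbf{a}}}(1)$ to obtain
\begin{equation*}
\hat{h}_{\psi_{\mathbf{a}}}(0) = \frac1d \hat{h}_{\psi_{\mathbf{a}}}(1) \leq \frac1{d(d-1)} \log \mathscr{H}(\psi_{\mathbf{a}}) + \frac{\log d}{d(d-1)}.
\end{equation*}

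Combining the two observations finishes the proof: $\lambda_1(\psi_{\mathbf{a}}) \leq \hat{h}_{\psi_{\mathbf{a}}}(0) \leq \tfrac1{d(d-1)}\log\mathscr{H}(\psi_{\mathbf{a}}) + \tfrac{\log d}{d(d-1)}$. There is essentially no obstacle here — the only subtlety is recognizing that one should evaluate the canonical height at $z=1$ (where $h=0$) rather than blindly at $z=0$, and then use the functional equation $\hat{h}_{\psi_{\mathbf{a}}}(1) = d\,\hat{h}_{\psi_{\mathbf{a}}}(0)$ to gain the extra factor of $d$ in the denominator that matches the conjectural constant $\tfrac1{d(d-1)}$ appearing in Theorem~\ref{Theorem Lang}. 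The hypothesis $0 \notin \mathrm{Prep}_{\mathbb{Q}}(\psi_{\mathbf{a}})$ is needed only to ensure $z=0$ is admissible as a competitor in the minimum defining $\lambda_1$; without it the minimum might be taken over rational numbers all of which have larger canonical height.
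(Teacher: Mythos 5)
Your proof is correct and is essentially the same as the paper's: both apply Lemma~\ref{Lemma upper bound} at $z=1$ (where $h(1)=0$), use $\psi_{\mathbf{a}}(0)=1$ together with the functional equation \eqref{Equality global height} to convert the bound on $\hat{h}_{\psi_{\mathbf{a}}}(1)$ into one on $\hat{h}_{\psi_{\mathbf{a}}}(0)$, and then invoke the hypothesis $0 \notin \mathrm{Prep}_{\mathbb{Q}}(\psi_{\mathbf{a}})$ to deduce $\lambda_1(\psi_{\mathbf{a}}) \leq \hat{h}_{\psi_{\mathbf{a}}}(0)$. The digression about iterating further and letting $n \to \infty$ is unnecessary, as you yourself note, but the clean version you settle on matches the paper exactly.
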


\begin{proof}
An application of Lemma~\ref{Lemma upper bound} gives
\begin{equation*}
\hat{h}_{\psi_{\mathbf{a}}}(1) \leq \frac1{d-1} \log \mathscr{H}(\psi_{\mathbf{a}}) + \frac{\log d}{d-1}.
\end{equation*}
Therefore, the equalities $\psi_{\mathbf{a}}(0) = 1$ and \eqref{Equality global height} yield
\begin{equation*}
\hat{h}_{\psi_{\mathbf{a}}}(0) \leq \frac1{d(d-1)} \log \mathscr{H}(\psi_{\mathbf{a}}) + \frac{\log d}{d(d-1)},
\end{equation*}
which completes the proof since by assumption $0 \notin \mathrm{Prep}_{\mathbb{Q}}(\psi_{\mathbf{a}})$.
\end{proof}

We now have all the tools required to prove Theorem~\ref{Theorem Lang}.

\begin{proof}[Proof of Theorem~\ref{Theorem Lang}]
Appealing to Lemma~\ref{Lemma lambda1} we get
\begin{equation*}
\# \left\{ \psi_{\mathbf{a}} \in \mathscr{P}_d(X) : \lambda_1(\psi_{\mathbf{a}}) > \left( \frac1{d(d-1)} + \varepsilon \right) \log \mathscr{H}(\psi_{\mathbf{a}}) \right\} \leq \sum_{\substack{\psi_{\mathbf{a}} \in \mathscr{P}_d(X) \\ 0 \in \mathrm{Prep}_{\mathbb{Q}}(\psi_{\mathbf{a}})}} 1 + O(1).
\end{equation*}
It follows in particular that
\begin{equation*}
\# \left\{ \psi_{\mathbf{a}} \in \mathscr{P}_d(X) : \lambda_1(\psi_{\mathbf{a}}) > \left( \frac1{d(d-1)} + \varepsilon \right) \log \mathscr{H}(\psi_{\mathbf{a}}) \right\} \ll \sum_{\psi_{\mathbf{a}} \in \mathscr{P}_d(X)} \# \mathrm{Prep}_{\mathbb{Q}}(\psi_{\mathbf{a}}).
\end{equation*}
Therefore, an application of Theorem~\ref{Theorem UBC} yields
\begin{equation*}
\lim_{X \to \infty} \frac1{\# \mathscr{P}_d(X)} \cdot \# \left\{ \psi_{\mathbf{a}} \in \mathscr{P}_d(X) : \lambda_1(\psi_{\mathbf{a}}) > \left( \frac1{d(d-1)} + \varepsilon \right) \log \mathscr{H}(\psi_{\mathbf{a}}) \right\} = 0.
\end{equation*}
We thus conclude that in order to complete the proof of Theorem~\ref{Theorem Lang}, it suffices to prove that for any $\varepsilon \in (0,1/d^2)$, we have
\begin{equation}
\label{Goal Lang}
\lim_{X \to \infty} \frac1{\# \mathscr{P}_d(X)} \cdot
\# \left\{ \psi_{\mathbf{a}} \in \mathscr{P}_d(X) : \lambda_1(\psi_{\mathbf{a}}) < \left( \frac1{d(d-1)} - \varepsilon \right) \log \mathscr{H}(\psi_{\mathbf{a}}) \right\} = 0.
\end{equation}

We start by handling the case $d =2$. Invoking Lemma~\ref{Lemma d=2} we deduce that
\begin{equation*}
\# \left\{ \psi_{\mathbf{a}} \in \mathscr{P}_2(X) : \lambda_1(\psi_{\mathbf{a}}) < \left( \frac1{2} - \varepsilon \right) \log \mathscr{H}(\psi_{\mathbf{a}}) \right\} \leq \sum_{\substack{\psi_{\mathbf{a}} \in \mathscr{P}_2(X) \\ \mathrm{s}_2(a_0) < X^{1-2\varepsilon}}} 1.
\end{equation*}
Summing first over the squareful part $a_0/\mathrm{s}_2(a_0)$ of the integer $a_0$ using the upper bound \eqref{Upper bound kfull}, we get
\begin{equation*}
\# \left\{ \psi_{\mathbf{a}} \in \mathscr{P}_2(X) : \lambda_1(\psi_{\mathbf{a}}) < \left( \frac1{2} - \varepsilon \right) \log \mathscr{H}(\psi_{\mathbf{a}}) \right\} \ll X^{3/2} \sum_{s < X^{1-2\varepsilon}} \frac1{s^{1/2}}.
\end{equation*}
Appealing to the estimate \eqref{Estimate P} we thus derive
\begin{equation*}
\frac1{\# \mathscr{P}_2(X)} \cdot \# \left\{ \psi_{\mathbf{a}} \in \mathscr{P}_2(X) : \lambda_1(\psi_{\mathbf{a}}) < \left( \frac1{2} - \varepsilon \right) \log \mathscr{H}(\psi_{\mathbf{a}}) \right\} \ll \frac1{X^{\varepsilon}}.
\end{equation*}
The equality \eqref{Goal Lang} follows, which completes the proof of Theorem~\ref{Theorem Lang} in the case $d=2$.

We now deal with the case $d \geq 3$. An application of Lemma~\ref{Lemma restricting} shows that for any $\delta \in (0,1)$, we have
\begin{align}
\begin{split}
\label{Estimate restricting}
\# \left\{ \psi_{\mathbf{a}} \in \mathscr{P}_d(X) : \lambda_1(\psi_{\mathbf{a}}) < \left( \frac1{d(d-1)} - \varepsilon \right) \log \mathscr{H}(\psi_{\mathbf{a}}) \right\} = & \ \# \mathscr{C}_{d,\varepsilon}^{(\delta)}(X) \\
& + O \left( X^{d-\delta + \varepsilon} \right),
\end{split}
\end{align}
where we have set
\begin{equation*}
\mathscr{C}_{d,\varepsilon}^{(\delta)}(X) = \left\{ \psi_{\mathbf{a}} \in \mathscr{P}_d^{(\delta)}(X) : \lambda_1(\psi_{\mathbf{a}}) < \left( \frac1{d(d-1)} - \varepsilon \right) \log \mathscr{H}(\psi_{\mathbf{a}}) \right\}.
\end{equation*}
Moreover, it is clear that for any $\psi_{\mathbf{a}} \in \mathscr{C}_{d,\varepsilon}^{(\delta)}(X)$, we have
\begin{equation*}
\# \left\{ z \in \mathbb{Q} : \hat{h}_{\psi_{\mathbf{a}}}(z) < \left( \frac1{d(d-1)} - \varepsilon \right) \log \mathscr{H}(\psi_{\mathbf{a}}) \right\} \geq 1.
\end{equation*}
It follows that
\begin{equation*}
\# \mathscr{C}_{d,\varepsilon}^{(\delta)}(X) \leq \sum_{\psi_{\mathbf{a}} \in \mathscr{P}_d^{(\delta)}(X)} \# \left\{ z \in \mathbb{Q} : \hat{h}_{\psi_{\mathbf{a}}}(z) < \left( \frac1{d(d-1)} - \varepsilon \right) \log \mathscr{H}(\psi_{\mathbf{a}}) \right\},
\end{equation*}
which implies in particular
\begin{equation*}
\# \mathscr{C}_{d,\varepsilon}^{(\delta)}(X) \leq \sum_{\psi_{\mathbf{a}} \in \mathscr{P}_d^{(\delta)}(X)} \# \left\{ z \in \mathbb{Q} : \hat{h}_{\psi_{\mathbf{a}}}(z) < \log X^{1/d(d-1) - \varepsilon} \right\}.
\end{equation*}
Appealing to Lemma~\ref{Lemma useful} with $k=2$, we thus get
\begin{equation*}
\# \mathscr{C}_{d,\varepsilon}^{(\delta)}(X) \ll X^{d - \varepsilon (-1+d(d-1)/2) + \delta (d-1)/4d}.
\end{equation*}
Since $\varepsilon \in (0, 1/d^2)$ we can choose $\delta = \varepsilon d^2$. Therefore, recalling the estimate \eqref{Estimate restricting}, we obtain
\begin{equation*}
\# \left\{ \psi_{\mathbf{a}} \in \mathscr{P}_d(X) : \lambda_1(\psi_{\mathbf{a}}) < \left( \frac1{d(d-1)} - \varepsilon \right) \log \mathscr{H}(\psi_{\mathbf{a}}) \right\} \ll X^{d - \varepsilon (-1+d(d-1)/4)}.
\end{equation*}
As a result, using the estimate \eqref{Estimate P} we eventually derive
\begin{equation*}
\frac1{\# \mathscr{P}_d(X)} \cdot \# \left\{ \psi_{\mathbf{a}} \in \mathscr{P}_d(X) : \lambda_1(\psi_{\mathbf{a}}) < \left( \frac1{d(d-1)} - \varepsilon \right) \log \mathscr{H}(\psi_{\mathbf{a}}) \right\} \ll \frac1{X^{\varepsilon(-1+d(d-1)/4)}}. 
\end{equation*}
The equality \eqref{Goal Lang} thus follows, which completes the proof of Theorem~\ref{Theorem Lang} in the case $d \geq 3$.
\end{proof}

\subsection{Dynamically independent vectors of rational numbers}

In this section we say that two polynomials $f, g \in \mathbb{Q}[z]$ are conjugate if there exists $(\alpha, \beta) \in \overline{\mathbb{Q}}^{\times} \times \overline{\mathbb{Q}}$ such that
\begin{equation*}
g(z) = \frac{f(\alpha z + \beta)}{\alpha} - \frac{\beta}{\alpha}.
\end{equation*}
In addition, given $\ell \geq 1$ we follow Medvedev and Scanlon \cite[Definition~2.23]{MR3126567} and we define the $\ell$-th Chebyshev polynomial $C_{\ell} \in \mathbb{Z}[z]$ as the unique polynomial satisfying the functional equation
\begin{equation}
\label{Definition C}
C_{\ell} \left( z + \frac1{z} \right) = z^{\ell} + \frac1{z^{\ell}}.
\end{equation}
Furthermore, we call a polynomial of the form $-C_{\ell}$ for some $\ell \geq 1$ a negative Chebyshev polynomial. Our first task is to establish the following result.

\begin{lemma}
\label{Lemma conjugate}
Let $d \geq 2$ and $\psi_{\mathbf{a}} \in \mathscr{P}_d$. If $0 \notin \mathrm{Prep}_{\mathbb{Q}}(\psi_{\mathbf{a}})$ then no iterate of $\psi_{\mathbf{a}}$ is conjugate to a monomial, a Chebyshev polynomial or a negative Chebyshev polynomial.
\end{lemma}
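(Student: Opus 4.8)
The plan is to argue by contrapositive: assuming that some iterate $\psi_{\mathbf{a}}^n$ is conjugate to a monomial or a (possibly negative) Chebyshev polynomial, I would show that $0$ is forced to be a preperiodic point of $\psi_{\mathbf{a}}$. The key structural input is that monomials $z^D$ and Chebyshev polynomials $\pm C_D$ each possess a canonically distinguished finite invariant set of especially simple points, and any polynomial conjugate to one of them inherits a corresponding distinguished set which must be respected by the dynamics. For a polynomial $f$ conjugate to $z^D$, write $f = \varphi^{-1} \circ (\alpha z^D) \circ \varphi$-type normalizations aside and recall that, after affine conjugacy, a polynomial of this shape has a totally ramified fixed point other than infinity, namely the image of $0$ under the conjugating affine map; similarly, a polynomial conjugate to $\pm C_D$ has the images of $\pm 2$ as a distinguished periodic pair (these are the critical values lying on the Julia interval). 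The first step, then, is to pin down what these distinguished points are for $\psi_{\mathbf{a}}^n$ itself, and to observe that $0$ must be one of them.

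The crucial observation that lets $0$ enter the picture is that $\psi_{\mathbf{a}}$ has a specific normalized form: by \eqref{Definition psi} the coefficient of $z^{d-1}$ vanishes, which says precisely that $\psi_{\mathbf{a}}$ fixes the barycenter of its roots, and that barycenter is $0$. Equivalently, the finite critical points of $\psi_{\mathbf{a}}$ (hence of every iterate) have barycenter $0$, so $0$ is intrinsically attached to $\psi_{\mathbf{a}}$ in a conjugacy-covariant way. Now if $\psi_{\mathbf{a}}^n$ were conjugate to a monomial $z^D$ with $D = d^n$, then $\psi_{\mathbf{a}}^n$ would have exactly one finite critical point, counted with the full multiplicity $D-1$, and that point is both a fixed point of $\psi_{\mathbf{a}}^n$ and the barycenter of the (single) critical point, hence equal to $0$; therefore $\psi_{\mathbf{a}}^n(0) = 0$ and $0$ is periodic, contradicting $0 \notin \mathrm{Prep}_{\mathbb{Q}}(\psi_{\mathbf{a}})$. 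If instead $\psi_{\mathbf{a}}^n$ is conjugate to $\pm C_D$, then $\psi_{\mathbf{a}}^n$ has two finite critical points $c_1, c_2$, each of multiplicity roughly $D/2$, whose critical values $\psi_{\mathbf{a}}^n(c_i)$ again form a $\psi_{\mathbf{a}}^n$-invariant (size $\le 2$) set, and the barycenter constraint forces $c_1 + c_2 = 0$; tracking the normalized conjugacy one finds that the image of the midpoint of the two fixed critical values of $\pm C_D$ — which is $0$ on the Chebyshev side — corresponds to $0$ on the $\psi_{\mathbf{a}}$ side, so once more $0$ lands in a finite forward-invariant set and is preperiodic.

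The main obstacle I anticipate is making the "barycenter is conjugacy-covariant and pins down $0$" argument fully rigorous across the conjugation, since the conjugating pair $(\alpha,\beta)$ lives over $\overline{\mathbb{Q}}$ and one must check that the distinguished point it transports is $0 \in \mathbb{Q}$ rather than some other finite point; the clean way to do this is to compare the sums of the finite critical points (equivalently the second-highest coefficient after normalizing the leading term) on both sides of $g(z) = f(\alpha z + \beta)/\alpha - \beta/\alpha$, deduce $\beta$ in terms of the critical-point barycenter of $f$, and then identify which $\psi_{\mathbf{a}}$-point the monomial's superattracting fixed point (resp. the Chebyshev polynomial's critical orbit) maps to. A secondary point is that the statement is about iterates: one should note that if $\psi_{\mathbf{a}}^n$ is conjugate to $z^D$ or $\pm C_D$ then, since monomials and Chebyshev polynomials are characterized among polynomials by their (post-critically finite, abelian) dynamics, the fixed/periodic structure forcing $\psi_{\mathbf{a}}^n(0)$ into a finite invariant set is already enough — we do not need $\psi_{\mathbf{a}}$ itself to be such a polynomial, only that $0$ becomes preperiodic under $\psi_{\mathbf{a}}^n$, which is the same as being preperiodic under $\psi_{\mathbf{a}}$. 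Everything else (computing critical points of iterates, the explicit critical data of $z^D$ and $C_D$) is routine.
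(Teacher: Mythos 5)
Your outline takes essentially the same route as the paper: the vanishing of the coefficient of $z^{d^n-1}$ in $\psi_{\mathbf{a}}^n$ (equivalently, the vanishing of the sum of its finite critical points), matched against the same vanishing for $z^D$ and $\pm C_D$, forces $\beta=0$ in the conjugating affine map, and then one observes that $0$ lands in a finite forward-invariant set on the $\varphi$-side (the fixed point $0$ for a monomial, and $\{0,2,-2\}$ for $\pm C_D$, using $C_D(2)=2$ and $C_D(-2)=2(-1)^D$), hence $0$ is preperiodic. Two minor factual slips are worth correcting, although neither breaks the argument: $\pm C_D$ does not have ``two finite critical points each of multiplicity roughly $D/2$'' --- it has $D-1$ simple finite critical points $2\cos(k\pi/D)$ whose critical \emph{values} are $\pm 2$, so you have conflated critical points with critical values (the fact you actually need, that the sum of the critical points of $\pm C_D$ is zero, is nonetheless true); and the vanishing of the $z^{d-1}$-coefficient says that the barycenter of the roots of $\psi_{\mathbf{a}}$ is $0$, not that $\psi_{\mathbf{a}}$ fixes it (indeed the parametrization $\mathscr{P}_d$ deliberately excludes polynomials fixing the barycenter of their roots).
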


\begin{proof}
We start by recalling that for any $m \geq 1$ the polynomial $\psi_{\mathbf{a}}^m$ has degree $d^m$. We reason by contradiction and we thus assume that for some $n \geq 1$ and for some polynomial $\varphi(z) \in \{ z^{d^n}, C_{d^n}(z), - C_{d^n}(z) \}$, there exists $(\alpha, \beta) \in \overline{\mathbb{Q}}^{\times} \times \overline{\mathbb{Q}}$ such that
\begin{equation*}
\psi_{\mathbf{a}}^n(z) = \frac{\varphi( \alpha z + \beta)}{\alpha} - \frac{\beta}{\alpha}.
\end{equation*}
But it is immediate to check by induction that the coefficient of degree $d^n-1$ of the polynomial $\psi_{\mathbf{a}}^n$ is equal to $0$. Since $\varphi$ also has this property, we deduce that we must have $\beta=0$ and therefore
\begin{equation}
\label{Equality phi}
\psi_{\mathbf{a}}^n(z) = \frac{\varphi(\alpha z)}{\alpha}.
\end{equation}

If either $d$ is odd or $\varphi(z) = z^{d^n}$ then $\varphi(0) = 0$ and thus also $\psi_{\mathbf{a}}^n(0) = 0$, which contradicts our assumption that $0 \notin \mathrm{Prep}_{\mathbb{Q}}(\psi_{\mathbf{a}})$. Next, if $d$ is even then the polynomial $C_{d^n}$ is even and is therefore conjugate to $-C_{d^n}$. As a result, we can now assume that $d$ is even and $\varphi(z) = C_{d^n}(z)$. Observing that it follows from the definition \eqref{Definition C} of the polynomial $C_{d^n}$ that $C_{d^n}(0) = 2 (-1)^{d^n/2}$, we see that the equality \eqref{Equality phi} shows that we must have
\begin{equation}
\label{Equality even}
\psi_{\mathbf{a}}^n(z) = (-1)^{d^n/2} \frac{\psi_{\mathbf{a}}^n(0)}{2} C_{d^n} \left( \frac{2}{\psi_{\mathbf{a}}^n(0)} z \right).
\end{equation}
In addition, the definition \eqref{Definition C} of the polynomial $C_{d^n}$ also gives $C_{d^n}(2)=2$, which yields
\begin{equation}
\label{Equality 0 prep}
\psi_{\mathbf{a}}^{2n}(0) = (-1)^{d^n/2} \psi_{\mathbf{a}}^n(0).
\end{equation}
But the equality \eqref{Equality even} shows that the polynomial $\psi_{\mathbf{a}}^n$ is even, so composing the equality \eqref{Equality 0 prep} by $\psi_{\mathbf{a}}^n$ we eventually obtain $\psi_{\mathbf{a}}^{3n}(0) = \psi_{\mathbf{a}}^{2n}(0)$. Once again this contradicts our assumption that $0 \notin \mathrm{Prep}_{\mathbb{Q}}(\psi_{\mathbf{a}})$, which completes the proof.
\end{proof}

Given a polynomial $\psi_{\mathbf{a}} \in \mathscr{P}_d$ satisfying a mild assumption, the following result allows us to exhibit many $\psi_{\mathbf{a}}$-dynamically independent vectors of rational numbers by appealing to the work of Medvedev and Scanlon \cite{MR3126567}. This will play a key role in the proof of the upper bound in Proposition~\ref{Proposition lambda}.

\begin{lemma}
\label{Lemma independent}
Let $d \geq 2$ and $\psi_{\mathbf{a}} \in \mathscr{P}_d$. Let also $N \geq 1$ and $q_1, \dots, q_N$ be distinct prime numbers coprime to $a_0 a_d$. If $0 \notin \mathrm{Prep}_{\mathbb{Q}}(\psi_{\mathbf{a}})$ then the vector
\begin{equation*}
\left( \frac1{q_1}, \dots, \frac1{q_N} \right)
\end{equation*}
is $\psi_{\mathbf{a}}$-dynamically independent.
\end{lemma}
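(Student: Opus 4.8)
The plan is to combine an arithmetic statement about the forward orbits of the points $1/q_i$ under $\psi_{\mathbf{a}}$ with the classification of invariant subvarieties due to Medvedev and Scanlon \cite{MR3126567}.

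First I would prove the following claim: if $q$ is a prime with $q \nmid a_0 a_d$ and $z = x/y \in \mathbb{Q}$ is written in lowest terms with $q \mid y$, then for every $n \geq 0$ the denominator of $\psi_{\mathbf{a}}^n(z)$ has $q$-adic valuation $d^n v_q(y)$ and is divisible by no prime outside those dividing $a_0 y$. Indeed, combining the definition \eqref{Definition psi} of $\psi_{\mathbf{a}}$ with the definition \eqref{Definition sigma} of $\sigma_{\mathbf{a}}$ gives $\psi_{\mathbf{a}}(x/y) = (\sigma_{\mathbf{a}}(x,y) + a_0 y^d)/(a_0 y^d)$; since $\gcd(x,y) = 1$ and $q \mid y$, every monomial of $\sigma_{\mathbf{a}}(x,y)$ other than $a_d x^d$ is divisible by $y^2$, whence $v_q(\sigma_{\mathbf{a}}(x,y)) = v_q(a_d x^d) = 0$ while $v_q(a_0 y^d) = d\, v_q(y) \geq d$, so $q$ does not divide the numerator of $\psi_{\mathbf{a}}(x/y)$ and the reduced denominator of $\psi_{\mathbf{a}}(x/y)$ divides $a_0 y^d$ with $q$-adic valuation $d\, v_q(y)$; the claim follows by induction. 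In particular, taking $z = 1/q_i$ we get $h(\psi_{\mathbf{a}}^n(1/q_i)) \geq d^n \log q_i$, so $1/q_i \notin \mathrm{Prep}_{\mathbb{Q}}(\psi_{\mathbf{a}})$; moreover $|\psi_{\mathbf{a}}^n(1/q_i)|_{q_i} = q_i^{d^n}$ and $|\psi_{\mathbf{a}}^n(1/q_i)|_{q_j} \leq 1$ for all $n \geq 0$ whenever $j \neq i$.

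Now assume for contradiction that $\mathbf{z} = (1/q_1, \dots, 1/q_N)$ is not $\psi_{\mathbf{a}}$-dynamically independent, and fix a nonzero squarefree $\mathbf{P} \in \overline{\mathbb{Q}}[X_1, \dots, X_N]$ with property \eqref{Property independence} vanishing at $\mathbf{z}$. Writing $\Phi = \psi_{\mathbf{a}} \times \cdots \times \psi_{\mathbf{a}}$, property \eqref{Property independence} says $\Phi(V(\mathbf{P})) \subseteq V(\mathbf{P})$; since $\Phi$ is finite it maps each irreducible component of the hypersurface $V(\mathbf{P})$ onto a component, and following the component of $V(\mathbf{P})$ through $\mathbf{z}$ forward under $\Phi$ we reach after some $m \geq 0$ steps a component $W = V(\mathbf{P}_0)$, with $\mathbf{P}_0$ an irreducible factor of $\mathbf{P}$, such that $\Phi^l(W) = W$ for some $l \geq 1$ and $\Phi^m(\mathbf{z}) \in W$. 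Then $\mathbf{P}_0$ satisfies the analogue of \eqref{Property independence} with $\psi_{\mathbf{a}}$ replaced by $\psi_{\mathbf{a}}^l$, and by Lemma~\ref{Lemma conjugate} no iterate of $\psi_{\mathbf{a}}$ — hence no iterate of $\psi_{\mathbf{a}}^l$ — is conjugate to a monomial, a Chebyshev polynomial or a negative Chebyshev polynomial, so the Medvedev--Scanlon classification \cite{MR3126567} applies to $\psi_{\mathbf{a}}^l$ and yields $W = \{X_i = g(X_j)\}$ for some $i, j \in \{1, \dots, N\}$ and some $g \in \overline{\mathbb{Q}}[z]$ commuting with $\psi_{\mathbf{a}}^l$. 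Writing $\zeta_i = \psi_{\mathbf{a}}^m(1/q_i)$ and $\zeta_j = \psi_{\mathbf{a}}^m(1/q_j)$, it follows that $\zeta_i = g(\zeta_j)$, and since $g$ commutes with every iterate of $\psi_{\mathbf{a}}^l$ we get $\psi_{\mathbf{a}}^{ln}(\zeta_i) = g(\psi_{\mathbf{a}}^{ln}(\zeta_j))$ for all $n \geq 0$, where $\psi_{\mathbf{a}}^{ln}(\zeta_i) = \psi_{\mathbf{a}}^{m+ln}(1/q_i)$ and likewise for $j$.

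It then remains to rule out both cases. If $i = j$ then every $\psi_{\mathbf{a}}^{ln}(\zeta_i)$ is a fixed point of $g$; by the first paragraph these are infinitely many pairwise distinct rational numbers, whereas $g$ is not the identity (as $W$ is a proper subvariety) and hence has only finitely many fixed points, a contradiction. If $i \neq j$ then by the first paragraph $|\psi_{\mathbf{a}}^{ln}(\zeta_i)|_{q_j} \leq 1$ for all $n$, while $|\psi_{\mathbf{a}}^{ln}(\zeta_j)|_{q_j} = q_j^{d^{m+ln}} \to \infty$; if $\deg g \geq 1$, then fixing a place $w$ above $q_j$ of the number field generated by the coefficients of $g$, the dominant-term estimate gives $|g(\psi_{\mathbf{a}}^{ln}(\zeta_j))|_w \to \infty$, and since $g(\psi_{\mathbf{a}}^{ln}(\zeta_j)) = \psi_{\mathbf{a}}^{ln}(\zeta_i) \in \mathbb{Q}$ this contradicts $|\psi_{\mathbf{a}}^{ln}(\zeta_i)|_{q_j} \leq 1$; while if $\deg g = 0$ then $\zeta_i = g(\zeta_j)$ is a fixed point of $\psi_{\mathbf{a}}^l$, so $1/q_i \in \mathrm{Prep}_{\mathbb{Q}}(\psi_{\mathbf{a}})$, again contradicting the first paragraph. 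I expect the main technical points to be the reduction in the third paragraph — passing from an arbitrary witnessing polynomial to an irreducible component invariant under a single iterate of $\Phi$, so that the Medvedev--Scanlon theorem becomes applicable — together with the need to accommodate coefficients of $g$ lying in $\overline{\mathbb{Q}}$ rather than $\mathbb{Q}$, which is precisely what forces the place-extension argument in the last case.
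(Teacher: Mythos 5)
Your proposal is correct and follows essentially the same strategy as the paper's proof: pass to an irreducible component of $V(\mathbf{P})$ that is periodic under the diagonal map, invoke Lemma~\ref{Lemma conjugate} to bring the Medvedev--Scanlon theorem into play, obtain a relation $X_i = g(X_j)$ with $g$ commuting with the relevant iterate of $\psi_{\mathbf{a}}$, and then contradict this by tracking the $q$-adic growth of the orbits of $1/q_i$. The paper states the orbit computation $\bigl|\psi_{\mathbf{a}}^s(1/q_i)\bigr|_{q_i} = q_i^{d^s}$ as ``straightforward to check'' while you derive it carefully, and you are a bit more explicit than the paper about the degenerate case $\deg g = 0$ and about extending the valuation to the field of definition of $g$; but these are presentational refinements of the same argument rather than a different route.
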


\begin{proof}
We reason by contradiction and we thus assume that there exists a nonzero polynomial $\mathbf{P} \in \overline{\mathbb{Q}}[X_1, \dots, X_N]$ satisfying the property \eqref{Property independence} and
\begin{equation*}
\mathbf{P} \left( \frac1{q_1}, \dots, \frac1{q_N} \right)= 0.
\end{equation*}
We introduce the algebraic set
\begin{equation*}
W = \left\{ (\xi_1, \dots, \xi_N) \in \mathbb{A}^N(\overline{\mathbb{Q}}) : \mathbf{P}(\xi_1, \dots, \xi_N) = 0 \right\},
\end{equation*}
and we let $V_0$ be an irreducible component of $W$ such that
\begin{equation}
\label{Vector V0}
\left( \frac1{q_1}, \dots, \frac1{q_N} \right) \in V_0.
\end{equation}
We also let
\begin{equation*}
\Psi_{\mathbf{a}} = (\psi_{\mathbf{a}}, \dots, \psi_{\mathbf{a}}) \in \mathbb{Q}[z]^N.
\end{equation*}
The fact that the polynomial $\mathbf{P}$ satisfies the property \eqref{Property independence} is equivalent to saying that
\begin{equation*}
\Psi_{\mathbf{a}}(W) \subseteq W.
\end{equation*}
Therefore, since the set $\Psi_{\mathbf{a}}(V_0)$ is irreducible there exists an irreducible component $V_1$ of $W$ such that
\begin{equation*}
\Psi_{\mathbf{a}}(V_0) \subseteq V_1.
\end{equation*}
Iterating this process we recursively define a sequence $(V_j)_{j \geq 0}$ of irreducible components of $W$. Next, we select $n \geq 0$ and $m > n$ such that $V_m = V_n$. By construction, the algebraic set $V_n$ is irreducible and satisfies
\begin{equation*}
\Psi_{\mathbf{a}}^{m-n}(V_n) \subseteq V_n.
\end{equation*}
In addition, by assumption we have $0 \notin \mathrm{Prep}_{\mathbb{Q}}(\psi_{\mathbf{a}})$ so it follows from Lemma~\ref{Lemma conjugate} that the polynomial $\psi_{\mathbf{a}}^{m-n}$ is not conjugate to a monomial, a Chebyshev polynomial or a negative Chebyshev polynomial. As a result, we are in position to appeal to the work of Medvedev and Scanlon \cite[Theorem]{MR3126567}. We deduce that there exist $R \geq 1$ and $(i_1, j_1), \dots, (i_R, j_R) \in \{1, \dots, N\}^2$, and also $g_1, \dots, g_R \in \overline{\mathbb{Q}}[z]$ commuting with $\psi_{\mathbf{a}}^{m-n}$ such that
\begin{equation}
\label{Equality MS}
V_n = \left\{ (\xi_1, \dots, \xi_N) \in \mathbb{A}^N(\overline{\mathbb{Q}}) : \forall r \in \{1, \dots, R\} \ \ \xi_{i_r} = g_r(\xi_{j_r}) \right\}.
\end{equation}

Let $r \in  \{1, \dots, R\}$. Recalling that $V_0$ satisfies the assumption \eqref{Vector V0}, we see that by construction we have
\begin{equation*}
\Psi_{\mathbf{a}}^n \left( \frac1{q_1}, \dots, \frac1{q_N} \right) \in V_n.
\end{equation*}
Hence, since the polynomials $g_r$ and $\psi_{\mathbf{a}}^{m-n}$ commute, for any $L \geq 0$ we have
\begin{equation}
\label{Equality iterates}
\psi_{\mathbf{a}}^{L(m-n)+n} \left( \frac1{q_{i_r}} \right) = g_r \left( \psi_{\mathbf{a}}^{L(m-n)+n} \left( \frac1{q_{j_r}} \right) \right).
\end{equation}
But by assumption $q_{i_r}$ is coprime to $a_0 a_d$ so it is straightforward to check that for any $s \geq 0$, we have
\begin{equation}
\label{Equality norm}
\left| \psi_{\mathbf{a}}^s \left( \frac1{q_{i_r}} \right) \right|_{q_{i_r}} = q_{i_r}^{d^s}.
\end{equation}
It follows that for any $L \geq 0$, we have
\begin{equation*}
\left| g_r \left( \psi_{\mathbf{a}}^{L(m-n)+n} \left( \frac1{q_{j_r}} \right) \right) \right|_{q_{i_r}} = q_{i_r}^{d^{L(m-n)+n}}.
\end{equation*}
Since $m>n$ this implies in particular that
\begin{equation*}
\lim_{L \to \infty} \left| \psi_{\mathbf{a}}^{L(m-n)+n} \left( \frac1{q_{j_r}} \right) \right|_{q_{i_r}} = \infty.
\end{equation*}
Using again the assumption that $q_{i_r}$ is coprime to $a_0$ we deduce that $q_{i_r} = q_{j_r}$, and therefore $i_r=j_r$ since the prime numbers $q_1, \dots, q_N$ are distinct. The equality \eqref{Equality iterates} thus shows that
\begin{equation*}
\left\{ \psi_{\mathbf{a}}^{L(m-n)+n} \left( \frac1{q_{i_r}} \right) : L \geq 0 \right\} \subseteq \left\{ z \in \mathbb{Q} : z-g_r(z) = 0 \right\}.
\end{equation*}
But the fact that $m>n$ and the equality \eqref{Equality norm} imply that the left-hand side is an infinite set, so we finally obtain $g_r(z) = z$.

We have thus proved that for any $r \in \{1, \dots, R\}$, we have $i_r=j_r$ and $g_r(z)=z$. Recalling the equality \eqref{Equality MS} we see that it eventually follows that $V_n = \mathbb{A}^N(\overline{\mathbb{Q}})$ and therefore $W = \mathbb{A}^N(\overline{\mathbb{Q}})$. This contradicts the fact that the polynomial $\mathbf{P}$ is nonzero, which completes the proof.
\end{proof}

\subsection{Dynamical successive minima of higher order}

\label{Section higher}

Our purpose in this section is to establish Propositions~\ref{Proposition lambda} and \ref{Proposition lambda statistical} and Theorem~\ref{Theorem N fixed}. Given $N \geq 1$ and $\psi_{\mathbf{a}} \in \mathscr{P}_d$, recall that the $N$-th dynamical successive minimum $\lambda_N(\psi_{\mathbf{a}})$ of the polynomial $\psi_{\mathbf{a}}$ was introduced in Definition~\ref{Definition successive}. Our first task is to complement Lemma~\ref{Lemma lambda1} by providing upper and lower bounds for the quantity $\lambda_N(\psi_{\mathbf{a}})$ for $N \geq 2$ under a mild assumption on the polynomial $\psi_{\mathbf{a}} \in \mathscr{P}_d$.

\begin{proposition}
\label{Proposition lambda}
Let $d \geq 3$ and $\psi_{\mathbf{a}} \in \mathscr{P}_d$. Let also $N \geq 2$ and $\varepsilon > 0$. If $0 \notin \mathrm{Prep}_{\mathbb{Q}}(\psi_{\mathbf{a}})$ then
\begin{equation*}
\frac1{2} \log \frac{N}{\mathscr{H}(\psi_{\mathbf{a}})} + O(1) < \lambda_N(\psi_{\mathbf{a}}) \leq \left( 1 + \varepsilon \right) \log N + \left( \frac1{d-1} + \varepsilon \right) \log \mathscr{H}(\psi_{\mathbf{a}}) + O(1),
\end{equation*}
where the implied constants depend at most on $d$ and $\varepsilon$.
\end{proposition}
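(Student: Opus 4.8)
The plan is to establish the lower and upper bounds separately: the lower bound by a counting argument resting on Lemma~\ref{Lemma lower bound}, and the upper bound by producing an explicit $\psi_{\mathbf{a}}$-dynamically independent $N$-tuple of rationals of small Weil height by means of Lemma~\ref{Lemma independent}.

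For the \emph{lower bound}, note first that $\mathscr{I}_N(\psi_{\mathbf{a}})$ is nonempty by Lemma~\ref{Lemma independent}, using the hypothesis $0 \notin \mathrm{Prep}_{\mathbb{Q}}(\psi_{\mathbf{a}})$; since moreover $\{z \in \mathbb{Q} : \hat{h}_{\psi_{\mathbf{a}}}(z) \leq u\}$ is finite for every $u$, the minimum in Definition~\ref{Definition successive} is attained, say by $(z_1, \dots, z_N)$. The key observation is that the $z_i$ are pairwise distinct: indeed $X_i - X_j$ is a nonzero polynomial satisfying the property \eqref{Property independence}, so $z_i = z_j$ with $i \neq j$ would contradict the dynamical independence of the tuple. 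Lemma~\ref{Lemma lower bound} then yields $h(z_i) \leq \lambda_N(\psi_{\mathbf{a}}) + \tfrac12 \log \mathscr{H}(\psi_{\mathbf{a}}) + \log d$ for every $i$, so $z_1, \dots, z_N$ are $N$ distinct elements of the set $\{z \in \mathbb{Q} : h(z) \leq \lambda_N(\psi_{\mathbf{a}}) + \tfrac12 \log \mathscr{H}(\psi_{\mathbf{a}}) + \log d\}$, whose cardinality is $\ll e^{2 \lambda_N(\psi_{\mathbf{a}})} \mathscr{H}(\psi_{\mathbf{a}})$. Comparing with $N$ and taking logarithms gives $\lambda_N(\psi_{\mathbf{a}}) > \tfrac12 \log (N / \mathscr{H}(\psi_{\mathbf{a}})) + O(1)$, with an implied constant depending only on $d$.

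For the \emph{upper bound} we may assume that $\mathscr{H}(\psi_{\mathbf{a}})$ is large in terms of $d$ and $\varepsilon$, the remaining cases being trivial. Set $\omega = \omega(a_0 |a_d|)$ and note that $2^{\omega} \leq a_0 |a_d| \leq \mathscr{H}(\psi_{\mathbf{a}})^2$, so that $\omega \ll \log \mathscr{H}(\psi_{\mathbf{a}})$. Recalling that the $M$-th prime is $\ll M \log M$, we may pick distinct primes $q_1 < \cdots < q_N$, each coprime to $a_0 a_d$, by discarding from the first $N + \omega$ primes the at most $\omega$ of them dividing $a_0 a_d$; thus $q_N$ is at most the $(N + \omega)$-th prime, so $q_N \ll (N + \omega) \log (N + \omega)$. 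By Lemma~\ref{Lemma independent} the vector $(1/q_1, \dots, 1/q_N)$ lies in $\mathscr{I}_N(\psi_{\mathbf{a}})$, while Lemma~\ref{Lemma upper bound} gives $\hat{h}_{\psi_{\mathbf{a}}}(1/q_i) \leq \log q_i + \tfrac1{d-1} \log \mathscr{H}(\psi_{\mathbf{a}}) + \tfrac{\log d}{d-1}$ for each $i$. Therefore $\lambda_N(\psi_{\mathbf{a}}) \leq \log q_N + \tfrac1{d-1} \log \mathscr{H}(\psi_{\mathbf{a}}) + O(1)$, and it suffices to check that $\log q_N \leq (1 + \varepsilon) \log N + \varepsilon \log \mathscr{H}(\psi_{\mathbf{a}}) + O(1)$. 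This follows from a short case analysis: if $N \geq \omega$ then $q_N \ll N \log N$, whence $\log q_N \leq \log N + \log \log N + O(1) \leq (1 + \varepsilon) \log N + O_{\varepsilon}(1)$; and if $N < \omega$ then $q_N \ll \omega \log \omega \ll \log \mathscr{H}(\psi_{\mathbf{a}}) \log \log \mathscr{H}(\psi_{\mathbf{a}})$, whence $\log q_N \ll \log \log \mathscr{H}(\psi_{\mathbf{a}}) \leq \varepsilon \log \mathscr{H}(\psi_{\mathbf{a}}) + O_{\varepsilon}(1)$.

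All the conceptual input is packaged in Lemmas~\ref{Lemma lower bound}, \ref{Lemma upper bound} and \ref{Lemma independent}, so the only point demanding attention is the uniformity in the upper bound. When $N$ is small relative to $\omega(a_0 |a_d|)$---equivalently, when $\log N$ is small relative to $\log \log \mathscr{H}(\psi_{\mathbf{a}})$---the smallest available family of primes avoiding $a_0 a_d$ forces $q_N$ to be of size roughly $\log \mathscr{H}(\psi_{\mathbf{a}})$ rather than of size $N$, and it is precisely the term $\varepsilon \log \mathscr{H}(\psi_{\mathbf{a}})$ in the statement that absorbs this discrepancy. I expect keeping track of this, rather than any genuine difficulty, to be the main obstacle.
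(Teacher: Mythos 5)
Your proposal is correct and follows essentially the same route as the paper: both directions rest on exactly Lemmas~\ref{Lemma lower bound}, \ref{Lemma upper bound} and \ref{Lemma independent}, with the vector $(1/q_1, \dots, 1/q_N)$ of inverse primes coprime to $a_0 a_d$ furnishing the upper bound and a pigeonhole/counting step furnishing the lower bound. The only cosmetic differences are that the paper phrases the lower bound by selecting a single $z_{i_0}$ of large Weil height rather than bounding all $N$ entries and counting, and that it controls $\log p_{N+\omega(a_0|a_d|)}$ via Bertrand's postulate together with $2^{\omega(n)} \leq \tau(n) \ll n^{\varepsilon}$ rather than via your case split on $N \gtrless \omega$; both variants lead to the same conclusion with the same uniformity.
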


\begin{proof}
We start by proving the lower bound. Let $(z_1, \dots, z_N) \in \mathbb{Q}^N$ be a $\psi_{\mathbf{a}}$-dynamically independent vector. It is clear that for any $i, j \in \{1, \dots, N\}$ such that $i \neq j$ we have $z_i \neq z_j$. As a result, since
\begin{equation*}
\# \left\{ z \in \mathbb{Q} : h(z) \leq \frac1{2} \log N - \log 2 \right\} < N,
\end{equation*}
it follows that there exists $i_0 \in \{1, \dots, N \}$ such that
\begin{equation*}
h(z_{i_0}) > \frac1{2} \log N - \log 2.
\end{equation*}
Therefore, Lemma~\ref{Lemma lower bound} yields
\begin{equation*}
\hat{h}_{\psi_{\mathbf{a}}}(z_{i_0}) > \frac1{2} \log N - \frac1{2} \log \mathscr{H}(\psi_{\mathbf{a}}) - \log d - \log 2,
\end{equation*}
which completes the proof of the claimed lower bound.

We now turn to the proof of the upper bound. We start by noting that Lemma~\ref{Lemma upper bound} shows that for any prime number $p$, we have
\begin{equation}
\label{Upper bound canonical}
\hat{h}_{\psi_{\mathbf{a}}} \left( \frac1{p} \right) \leq \frac1{d-1} \log \mathscr{H}(\psi_{\mathbf{a}}) + \log p + \frac{\log d}{d-1}.
\end{equation}
By assumption we have $0 \notin \mathrm{Prep}_{\mathbb{Q}}(\psi_{\mathbf{a}})$ so we are in position to appeal to Lemma~\ref{Lemma independent}. Recall that $\omega(n)$ denotes the number of prime numbers dividing an integer $n \geq 1$. In addition, for any $m \geq 1$ we let $p_m$ be the $m$-th prime number. Combining Lemma~\ref{Lemma independent} and the inequality \eqref{Upper bound canonical} we deduce that
\begin{equation}
\label{Upper bound lambda}
\lambda_N(\psi_{\mathbf{a}}) \leq \frac1{d-1} \log \mathscr{H}(\psi_{\mathbf{a}}) + \log p_{N + \omega(a_0 |a_d|)} + \frac{\log d}{d-1}.
\end{equation}
Furthermore, using Chebyshev's result stating that for any $m \geq 1$ we have $p_{m+1} \leq 2 p_m$, we derive
\begin{equation*}
p_{N + \omega(a_0 |a_d|)} \leq 2^{\omega(a_0 |a_d|)} p_N.
\end{equation*}
Hence, the divisor bound \eqref{Upper bound divisor} and Chebyshev's upper bound $p_m \ll m \log m$ yield in particular
\begin{equation*}
p_{N + \omega(a_0 |a_d|)} \ll \mathscr{H}(\psi_{\mathbf{a}})^{\varepsilon} N^{1+\varepsilon}.
\end{equation*}
Recalling the upper bound \eqref{Upper bound lambda}, we see that the claimed upper bound follows, which completes the proof.
\end{proof}

It may be worth pointing out that the same proof shows that the upper bound in Proposition~\ref{Proposition lambda} also holds if $d=2$.

Given $d \geq 3$ and $\psi_{\mathbf{a}} \in \mathscr{P}_d$ satisfying $0 \notin \mathrm{Prep}_{\mathbb{Q}}(\psi_{\mathbf{a}})$, Proposition~\ref{Proposition lambda} implies in particular that whenever $N > \mathscr{H}(\psi_{\mathbf{a}})^{1+\kappa}$ for some $\kappa>0$, we have
\begin{equation*}
\log N \ll \lambda_N(\psi_{\mathbf{a}}) \ll \log N,
\end{equation*}
where the implied constants depend at most on $d$ and $\kappa$.

We now proceed to study the much harder situation where $\log N$ is small compared to $\log \mathscr{H}(\psi_{\mathbf{a}})$. For this purpose, for any $t \geq 0$ we define the integer
\begin{equation}
\label{Definition Nt}
N_t(\psi_{\mathbf{a}}) = 1 + \left\lceil \mathscr{H}(\psi_{\mathbf{a}})^t \right\rceil.
\end{equation}
The following result is the culmination of our statistical investigation of the size of dynamical successive minima of polynomials.

\begin{proposition}
\label{Proposition lambda statistical}
Let $d \geq 3$ and $\varepsilon > 0$. Let also $t \geq 0$. We have
\begin{equation*}
\lim_{X \to \infty} \frac1{\# \mathscr{P}_d(X)} \cdot
\# \left\{ \psi_{\mathbf{a}} \in \mathscr{P}_d(X) :
\left( \frac{1+2t}{d+3} - \varepsilon \right) \log \mathscr{H}(\psi_{\mathbf{a}}) < \lambda_{N_t(\psi_{\mathbf{a}})}(\psi_{\mathbf{a}}) \right\} = 1.
\end{equation*}
\end{proposition}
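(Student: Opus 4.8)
The plan is to bound the complementary ``bad set'': setting $\theta = \frac{1+2t}{d+3} - \varepsilon$, let
\[
\mathscr{B}(X) = \left\{ \psi_{\mathbf{a}} \in \mathscr{P}_d(X) : \lambda_{N_t(\psi_{\mathbf{a}})}(\psi_{\mathbf{a}}) \leq \theta \log \mathscr{H}(\psi_{\mathbf{a}}) \right\},
\]
and prove that $\# \mathscr{B}(X) \ll X^{d-c}$ for some $c = c(d,\varepsilon,t) > 0$; in view of \eqref{Estimate P} this yields the proposition. Since the statement for a given $\varepsilon$ is implied by the statement for any smaller value, we may assume $\varepsilon$ is small, so that in particular $\theta > 0$. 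The key elementary input is the following. If $\psi_{\mathbf{a}} \in \mathscr{B}(X)$, the minimum defining $\lambda_{N_t(\psi_{\mathbf{a}})}(\psi_{\mathbf{a}})$ is attained at some $\psi_{\mathbf{a}}$-dynamically independent vector $(z_1, \dots, z_{N_t(\psi_{\mathbf{a}})}) \in \mathbb{Q}^{N_t(\psi_{\mathbf{a}})}$, each coordinate of which satisfies $\hat{h}_{\psi_{\mathbf{a}}}(z_i) \leq \theta \log \mathscr{H}(\psi_{\mathbf{a}})$. These coordinates are pairwise distinct, for otherwise the nonzero polynomial $X_i - X_j$ would vanish at the vector while satisfying property~\eqref{Property independence}, contradicting dynamical independence; consequently at most one of them vanishes, and therefore
\[
\# \left\{ z \in \mathbb{Q}^{\times} : \hat{h}_{\psi_{\mathbf{a}}}(z) \leq \theta \log \mathscr{H}(\psi_{\mathbf{a}}) \right\} \geq N_t(\psi_{\mathbf{a}}) - 1 = \left\lceil \mathscr{H}(\psi_{\mathbf{a}})^t \right\rceil \geq \mathscr{H}(\psi_{\mathbf{a}})^t,
\]
the middle equality being \eqref{Definition Nt}.

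With this at hand the argument is a Markov-type bound processed through a dyadic decomposition over the height. Fix small parameters $\delta \in (0,1)$ and $\varepsilon' \in (0, \delta)$, to be chosen at the end. For each dyadic $Y$ with $2 \leq Y \leq 2X$, let $\mathscr{B}_Y$ be the set of $\psi_{\mathbf{a}} \in \mathscr{B}(X)$ with $Y/2 < \mathscr{H}(\psi_{\mathbf{a}}) \leq Y$. Writing $\mathscr{B}_Y = (\mathscr{B}_Y \cap \mathscr{P}_d^{(\delta)}(Y)) \sqcup (\mathscr{B}_Y \smallsetminus \mathscr{P}_d^{(\delta)}(Y))$, Lemma~\ref{Lemma restricting} bounds the second piece by $\ll Y^{d-\delta+\varepsilon'}$, and these sum to $\ll X^{d-\delta+\varepsilon'}$. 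For the first piece, each $\psi_{\mathbf{a}}$ it contains yields at least $N_t(\psi_{\mathbf{a}}) - 1 > (Y/2)^t$ distinct $z \in \mathbb{Q}^{\times}$ with $\hat{h}_{\psi_{\mathbf{a}}}(z) \leq \log(Y^{\theta})$, so
\[
\# \left( \mathscr{B}_Y \cap \mathscr{P}_d^{(\delta)}(Y) \right) \ll \frac1{Y^t} \sum_{\psi_{\mathbf{a}} \in \mathscr{P}_d^{(\delta)}(Y)} \# \left\{ z \in \mathbb{Q}^{\times} : \hat{h}_{\psi_{\mathbf{a}}}(z) \leq \log(Y^{\theta}) \right\}.
\]
Applying the first (i.e., $\mathbb{Q}^{\times}$) estimate of Lemma~\ref{Lemma useful} with $k = 2$ and $T = Y^{\theta}$, and using $\alpha_{d,2} = (d-1)/(2d)$ together with the identity $2 + d\alpha_{d,2} = (d+3)/2$, the right-hand side is
\[
\ll Y^{-t + d - 1/2 + \delta(d-1)/(4d) + \varepsilon' + \theta(d+3)/2} = Y^{d + \delta(d-1)/(4d) + \varepsilon' - (d+3)\varepsilon/2},
\]
the last equality using $\theta(d+3)/2 = (1+2t)/2 - (d+3)\varepsilon/2$ and $-t - 1/2 + (1+2t)/2 = 0$. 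Choosing $\delta$ and $\varepsilon'$ small enough, in terms of $d$ and $\varepsilon$, that $\delta(d-1)/(4d) + \varepsilon' < (d+3)\varepsilon/2$, this exponent equals $d - c$ for some $c = c(d,\varepsilon) > 0$; summing over the $O(\log X)$ dyadic values of $Y$ gives $\# \mathscr{B}(X) \ll X^{d-\delta+\varepsilon'} + X^{d-c} = o(X^d)$, as required.

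The substantive point is the numerology: the exponent of $T$ in the first estimate of Lemma~\ref{Lemma useful} with $k = 2$ is exactly $(d+3)/2$, and this is what manufactures the denominator $d+3$ in the statement. It is therefore essential to work with $\mathbb{Q}^{\times}$ in place of $\mathbb{Q}$ — the estimate over $\mathbb{Q}$ carries instead the exponent $d^2 \alpha_{d,2} = d(d-1)/2$, which is too large and would already fail for $d \geq 4$ — and this is legitimate precisely because dynamical independence forces the coordinates of the extremal vector to be distinct, so discarding the at most one zero coordinate costs nothing. The choice $k = 2$ over $k = 3$ is optimal here since it minimises the exponent of $T$, while the dyadic decomposition over $\mathscr{H}(\psi_{\mathbf{a}})$ is the routine device needed because both the threshold and the integer $N_t(\psi_{\mathbf{a}})$ depend on the varying height. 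Note that neither Theorem~\ref{Theorem UBC} nor the hypothesis $0 \notin \mathrm{Prep}_{\mathbb{Q}}(\psi_{\mathbf{a}})$ is needed: the reduction above is unconditional.
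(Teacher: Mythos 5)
Your proposal is correct, and its core is the same as the paper's: restrict to $\mathscr{P}_d^{(\delta)}$ via Lemma~\ref{Lemma restricting}, observe that the minimising dynamically independent vector has pairwise distinct coordinates so that at least $N_t(\psi_{\mathbf{a}})-1$ of them are nonzero rationals of small canonical height, and then drive the count through the $\mathbb{Q}^{\times}$ estimate of Lemma~\ref{Lemma useful} with $k=2$, whose $T$-exponent $2+d\alpha_{d,2}=(d+3)/2$ is exactly what produces the threshold $(1+2t)/(d+3)$. The one genuine point of departure is the bookkeeping device used to cope with the fact that both the threshold and $N_t(\psi_{\mathbf{a}})$ vary with $\mathscr{H}(\psi_{\mathbf{a}})$ rather than with $X$: the paper multiplies by the weight $\mathscr{H}(\psi_{\mathbf{a}})^{-t}$, writes this weight as $\int_{\mathscr{H}(\psi_{\mathbf{a}})^t}^{\infty}\mathrm{d}u/u^2$, and splits into the cases $t=0$ and $t>0$ (the latter via partial summation), whereas you perform a single dyadic decomposition over $\mathscr{H}(\psi_{\mathbf{a}})$ which treats all $t\geq 0$ uniformly and sidesteps that case distinction. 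The two approaches yield the same saving; yours is arguably a touch cleaner, and it also incidentally avoids the paper's implicit constraint that $\varepsilon$ be small relative to $t$ (needed in the partial-summation step so that the exponent $2/(d+3)-\varepsilon/2t$ of $u$ is nonnegative). Your explicit justification that dynamical independence forces the coordinates to be distinct — so at most one vanishes — fills in a step the paper merely labels ``clear,'' and your remark that the $\mathbb{Q}$-version of Lemma~\ref{Lemma useful} would be too lossy is exactly right.
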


\begin{proof}
Our goal is to prove that for any $\varepsilon \in (0, 1/2d)$, we have
\begin{equation}
\label{Goal function lower}
\! \! \! \! \lim_{X \to \infty} \frac1{\# \mathscr{P}_d(X)} \cdot
\# \left\{ \psi_{\mathbf{a}} \in \mathscr{P}_d(X) :
\lambda_{N_t(\psi_{\mathbf{a}})}(\psi_{\mathbf{a}}) \leq \left( \frac{1+2t}{d+3} - \varepsilon \right) \log \mathscr{H}(\psi_{\mathbf{a}}) \right\} = 0.
\end{equation}
First, we note that Lemma~\ref{Lemma restricting} shows that for any $\delta \in (0,1)$, we have
\begin{align}
\begin{split}
\label{Estimate restricting ell}
\# \left\{ \psi_{\mathbf{a}} \in \mathscr{P}_d(X) :
\lambda_{N_t(\psi_{\mathbf{a}})}(\psi_{\mathbf{a}}) \leq \left( \frac{1+2t}{d+3} - \varepsilon \right) \log \mathscr{H}(\psi_{\mathbf{a}}) \right\} = & \ \# \mathscr{S}_{d,\varepsilon,t}^{(\delta)}(X) \\
& + O \left( X^{d-\delta + \varepsilon} \right),
\end{split}
\end{align}
where we have set
\begin{equation*}
\mathscr{S}_{d,\varepsilon,t}^{(\delta)}(X) = \left\{ \psi_{\mathbf{a}} \in \mathscr{P}_d^{(\delta)}(X) :
\lambda_{N_t(\psi_{\mathbf{a}})}(\psi_{\mathbf{a}}) \leq \left( \frac{1+2t}{d+3} - \varepsilon \right) \log \mathscr{H}(\psi_{\mathbf{a}}) \right\}.
\end{equation*}
Next, we observe that by Definition~\ref{Definition successive}, for any $\psi_{\mathbf{a}} \in \mathscr{S}_{d,\varepsilon,t}^{(\delta)}(X)$ we clearly have
\begin{equation*}
\# \left\{ z \in \mathbb{Q}^{\times} : \hat{h}_{\psi_{\mathbf{a}}}(z) \leq \left( \frac{1+2t}{d+3} - \varepsilon \right) \log \mathscr{H}(\psi_{\mathbf{a}}) \right\} \geq N_t(\psi_{\mathbf{a}}) - 1.
\end{equation*}
Recalling the definition \eqref{Definition Nt} of the quantity $N_t(\psi_{\mathbf{a}})$, we thus see that
\begin{equation*}
\# \mathscr{S}_{d,\varepsilon,t}^{(\delta)}(X) \ll \sum_{\psi_{\mathbf{a}} \in \mathscr{P}_d^{(\delta)}(X)} \frac1{\mathscr{H}(\psi_{\mathbf{a}})^t} \cdot \# \left\{ z \in \mathbb{Q}^{\times} : \hat{h}_{\psi_{\mathbf{a}}}(z) \leq \left( \frac{1+2t}{d+3} - \varepsilon \right) \log \mathscr{H}(\psi_{\mathbf{a}}) \right\}.
\end{equation*}

We now proceed to prove that
\begin{equation}
\label{Upper bound S}
\# \mathscr{S}_{d,\varepsilon,t}^{(\delta)}(X) \ll X^{d-\varepsilon(d-1)/4+\delta(d-1)/4d}.
\end{equation}
We handle separately the cases $t=0$ and $t > 0$. First, we see that in the case $t=0$ we have
\begin{equation*}
\# \mathscr{S}_{d,\varepsilon,0}^{(\delta)}(X) \ll \sum_{\psi_{\mathbf{a}} \in \mathscr{P}_d^{(\delta)}(X)} \# \left\{ z \in \mathbb{Q}^{\times} : \hat{h}_{\psi_{\mathbf{a}}}(z) \leq \log X^{1/(d+3)-\varepsilon} \right\}.
\end{equation*}
As a result, an application of Lemma~\ref{Lemma useful} with $k=2$ gives
\begin{equation*}
\# \mathscr{S}_{d,\varepsilon,0}^{(\delta)}(X) \ll X^{d-\varepsilon(d+1)/2+\delta(d-1)/4d},
\end{equation*}
and the upper bound \eqref{Upper bound S} in the case $t=0$ follows. We now deal with the case $t > 0$. An application of partial summation gives
\begin{equation*}
\# \mathscr{S}_{d,\varepsilon,t}^{(\delta)}(X) \ll \int_1^{\infty} \sum_{\substack{\psi_{\mathbf{a}} \in \mathscr{P}_d^{(\delta)}(X) \\ \mathscr{H}(\psi_{\mathbf{a}})^t < u}} \# \left\{ z \in \mathbb{Q}^{\times} : \hat{h}_{\psi_{\mathbf{a}}}(z) \leq \left( \frac{1+2t}{d+3} - \varepsilon \right) \log \mathscr{H}(\psi_{\mathbf{a}}) \right\} \frac{\mathrm{d} u}{u^2}.
\end{equation*}
We deduce that
\begin{equation*}
\# \mathscr{S}_{d,\varepsilon,t}^{(\delta)}(X) \ll \int_1^{\infty} \! \! \! \sum_{\psi_{\mathbf{a}} \in \mathscr{P}_d^{(\delta)}(X)} \! \! \# \left\{ z \in \mathbb{Q}^{\times} : \hat{h}_{\psi_{\mathbf{a}}}(z) \leq \log \left( X^{1/(d+3)-\varepsilon/2} u^{2/(d+3)-\varepsilon/2t} \right) \right\} \frac{\mathrm{d} u}{u^2}.
\end{equation*}
Therefore, an application of Lemma~\ref{Lemma useful} with $k=2$ shows that for $t > 0$, we have
\begin{equation*}
\# \mathscr{S}_{d,\varepsilon,t}^{(\delta)}(X) \ll X^{d-\varepsilon(d-1)/4+\delta(d-1)/4d} \int_1^{\infty} \frac{\mathrm{d} u}{u^{1+\varepsilon (d+3)/4t}},
\end{equation*}
and the upper bound \eqref{Upper bound S} in the case $t>0$ follows.

In addition, since $\varepsilon \in (0, 1/2d)$ we can choose $\delta = \varepsilon d/2$. As a result, recalling the estimate \eqref{Estimate restricting ell}, we see that
\begin{equation*}
\# \left\{ \psi_{\mathbf{a}} \in \mathscr{P}_d(X) :
\lambda_{N_t(\psi_{\mathbf{a}})}(\psi_{\mathbf{a}}) \leq \left( \frac{1+2t}{d+3} - \varepsilon \right) \log \mathscr{H}(\psi_{\mathbf{a}}) \right\} \ll X^{d-\varepsilon(d-1)/8}.
\end{equation*}
Appealing to the estimate \eqref{Estimate P}, we eventually get
\begin{equation*}
\frac1{\# \mathscr{P}_d(X)} \cdot
\# \left\{ \psi_{\mathbf{a}} \in \mathscr{P}_d(X) :
\lambda_{N_t(\psi_{\mathbf{a}})}(\psi_{\mathbf{a}}) \leq \left( \frac{1+2t}{d+3} - \varepsilon \right) \log \mathscr{H}(\psi_{\mathbf{a}}) \right\} \ll \frac1{X^{\varepsilon(d-1)/8}}.
\end{equation*}
The equality \eqref{Goal function lower} thus follows, which completes the proof.
\end{proof}

We now proceed to show that Theorem~\ref{Theorem N fixed} is a direct consequence of Theorems~\ref{Theorem UBC} and \ref{Theorem Lang} and Propositions~\ref{Proposition lambda} and \ref{Proposition lambda statistical}.

\begin{proof}[Proof of Theorem~\ref{Theorem N fixed}]
To start with, we note that the upper bound in Proposition~\ref{Proposition lambda} shows that
\begin{equation*}
\# \left\{ \psi_{\mathbf{a}} \in \mathscr{P}_d(X) : \lambda_2(\psi_{\mathbf{a}}) > \left( \frac1{d-1} + \varepsilon \right) \log \mathscr{H}(\psi_{\mathbf{a}}) \right\} \leq \sum_{\substack{\psi_{\mathbf{a}} \in \mathscr{P}_d(X) \\ 0 \in \mathrm{Prep}_{\mathbb{Q}}(\psi_{\mathbf{a}})}} 1 + O(1).
\end{equation*}
It follows in particular that
\begin{equation*}
\# \left\{ \psi_{\mathbf{a}} \in \mathscr{P}_d(X) : \lambda_2(\psi_{\mathbf{a}}) > \left( \frac1{d-1} + \varepsilon \right) \log \mathscr{H}(\psi_{\mathbf{a}}) \right\} \ll \sum_{\psi_{\mathbf{a}} \in \mathscr{P}_d(X)} \# \mathrm{Prep}_{\mathbb{Q}}(\psi_{\mathbf{a}}).
\end{equation*}
Appealing to Theorem~\ref{Theorem UBC} we thus obtain
\begin{equation*}
\lim_{X \to \infty} \frac1{\# \mathscr{P}_d(X)} \cdot
\# \left\{ \psi_{\mathbf{a}} \in \mathscr{P}_d(X) : \lambda_2(\psi_{\mathbf{a}}) > \left( \frac1{d-1} + \varepsilon \right) \log \mathscr{H}(\psi_{\mathbf{a}}) \right\} = 0.
\end{equation*}
Therefore, an application of Theorem~\ref{Theorem Lang} yields
\begin{equation}
\label{Equality lambda2}
\lim_{X \to \infty} \frac1{\# \mathscr{P}_d(X)} \cdot
\# \left\{ \psi_{\mathbf{a}} \in \mathscr{P}_d(X) :
\frac{\lambda_2(\psi_{\mathbf{a}})}{\lambda_1(\psi_{\mathbf{a}})} \leq d + \varepsilon \right\} = 1.
\end{equation}
In addition, choosing $t=0$ in Proposition~\ref{Proposition lambda statistical} and appealing again to Theorem~\ref{Theorem Lang}, we derive
\begin{equation*}
\lim_{X \to \infty} \frac1{\# \mathscr{P}_d(X)} \cdot
\# \left\{ \psi_{\mathbf{a}} \in \mathscr{P}_d(X) :
\frac{d(d-1)}{d+3} - \varepsilon < \frac{\lambda_2(\psi_{\mathbf{a}})}{\lambda_1(\psi_{\mathbf{a}})} \right\} = 1.
\end{equation*}
As a result, noting that
\begin{equation*}
\frac{d(d-1)}{d+3} = d \left( 1 - \frac{4}{d+3} \right),
\end{equation*}
we see that we have obtained the first part of Theorem~\ref{Theorem N fixed}.

Next, we remark that proceeding exactly as in the proof of the equality \eqref{Equality lambda2} we get
\begin{equation*}
\lim_{X \to \infty} \frac1{\# \mathscr{P}_d(X)} \cdot
\# \left\{ \psi_{\mathbf{a}} \in \mathscr{P}_d(X) :
\frac{\lambda_N(\psi_{\mathbf{a}})}{\lambda_1(\psi_{\mathbf{a}})} \leq d + \varepsilon \right\} = 1.
\end{equation*}
Appealing to the first part of Theorem~\ref{Theorem N fixed} and noting that
\begin{equation*}
\left( 1 - \frac{4}{d+3} \right)^{-1} = 1 + \frac{4}{d-1},
\end{equation*}
we thus derive
\begin{equation*}
\lim_{X \to \infty} \frac1{\# \mathscr{P}_d(X)} \cdot
\# \left\{ \psi_{\mathbf{a}} \in \mathscr{P}_d(X) :
\frac{\lambda_N(\psi_{\mathbf{a}})}{\lambda_2(\psi_{\mathbf{a}})} \leq 1 + \frac{4}{d-1} + \varepsilon \right\} = 1.
\end{equation*}
Since we clearly have $\lambda_N(\psi_{\mathbf{a}}) \geq \lambda_2(\psi_{\mathbf{a}})$ for any $\psi_{\mathbf{a}} \in \mathscr{P}_d$, we see that this completes the proof of Theorem~\ref{Theorem N fixed}.
\end{proof}

\bibliographystyle{amsplain}
\bibliography{biblio}

\end{document}